 \newtheorem{theorem}{Theorem}[section]
 \newtheorem{lemma}[theorem]{Lemma}
 \newtheorem{proposition}[theorem]{Proposition}
 \newtheorem{corollary}[theorem]{Corollary}
 \newtheorem{definition}[theorem]{Definition}
  \theoremstyle{definition}
 \newtheorem{remark}[theorem]{Remark}
\newtheorem*{acknowledgements}{Acknowledgements}
\newcommand{\g}{\mathfrak{g}}
\newcommand{\id}{\mathrm{id}}
\newcommand{\bs}{\mathbf{s}}
\newcommand{\Q}{\mathbb{Q}}
\newcommand{\R}{\mathbb{R}}
\newcommand{\sM}{\mathcal{M}}
\newcommand{\U}{\mathcal{U}}
\newcommand{\field}{\Bbbk}
\newcommand{\dR}{\mathrm{dR}}
\begin{document}

 \title{How to discretize the differential forms on the interval}

\author{Ruggero Bandiera}
\address{Universit\`a degli studi di Roma La Sapienza,
Dipartimento di Matematica ``Guido Castelnuovo",
P.le Aldo Moro 5, I-00185 Roma, Italy.}
\email{bandiera@mat.uniroma1.it}

\author{Florian Sch\"atz}
\address{University of Luxembourg, Mathematics Research Unit, 
6, rue Coudenhove-Kalergi,
L-1359 Luxembourg, Luxembourg.
}
\email{florian.schaetz@gmail.com}

\maketitle

 \begin{abstract}

We provide explicit quasi-isomorphisms between the following three algebraic structures associated to the unit interval: i) the commutative dg algebra of differential forms, ii) the 
non-commutative dg algebra of simplicial cochains and iii) the Whitney forms, equipped with a homotopy commutative and homotopy associative, i.e. $C_\infty$, algebra structure.
Our main interest lies in a natural `discretization' $C_\infty$ quasi-isomorphism
$\varphi$ from differential forms to Whitney forms.
We establish a uniqueness result that implies that $\varphi$
coincides with the morphism from homotopy transfer,
and obtain several explicit formulas for $\varphi$, all of which are related to the Magnus expansion.
In particular, we recover combinatorial formulas
for the Magnus expansion due to Mielnik and Pleba\'nski.

\end{abstract}

\tableofcontents

\section*{Introduction}

The purpose of this paper is to construct several explicit quasi-isomorphisms between three algebraic structures associated to the unit interval $[0,1]$, and study some of their properties.
The first algebraic structure we consider is the commutative dg algebra of differential forms $\Omega^*([0,1])$ -- with the usual de Rham differential and wedge product.
The other two structures are defined on the subcomplex $C^*([0,1])\subset\Omega^*([0,1])$ of Whitney forms
$$ C^*([0,1]) = \{a t + b (1-t) \, \vert \, a,b\in \field\} \oplus \{c \, dt \, \vert \, c \in \field\},$$
consisting of affine functions and constant one-forms on $[0,1]$. 
Notice that, as a complex, $C^*([0,1])$ is isomorphic to the complex
of simplicial cochains on the one-dimensional simplex: as such, it is equipped with
a dg algebra structure via the usual cup product $\cup$ of cochains, and we denote this dg algebra by
$C^*_\cup([0,1])$. The cup product $\cup$ is {\em not} graded commutative.

On the other hand, since the inclusion $\iota: C^*([0,1]) \hookrightarrow \Omega^*([0,1])$
is a quasi-isomorphism of complexes, the general transfer theorems of homotopical algebra guarantee the existence of a homotopy associative and commutative -- i.e., a $C_\infty$ -- algebra structure on $C^*([0,1])$.
The latter
was worked out explicitly in the papers \cite{Cheng-Getzler,Fiorenza-Manetti,Mnev}, cf. Theorem \ref{theorem: induced structure} in Subsection \ref{subsection: Whitney forms} below, and its Taylor coefficients are given in terms of Bernoulli numbers.
We denote $C^*([0,1])$, equipped with this $C_\infty$ algebra structure,
by $C_\infty^*([0,1])$.

Furthermore, and again by homotopy transfer, one obtains a deformation of the inclusion $\iota:C^*([0,1])\to\Omega^*([0,1])$ into a quasi-isomorphism $\mu:C^*_\infty([0,1])\to\Omega^*([0,1])$ of $C_\infty$ algebras -- see \cite{Cheng-Getzler,Fiorenza-Manetti,Mnev} for explicit formulas in terms of Bernoulli polynomials, and in particular \cite{Cheng-Getzler}  for the verification that one obtains indeed a morphism of $C_\infty$ algebras (a different proof can be found in Appendix \ref{appendix: C-infty}).

The $C_\infty$ algebra $C_\infty^*([0,1])$ 
contains -- in a precise mathematical sense -- the same information
as the commutative dg algebra $\Omega^*([0,1])$.
Since $C_\infty^*([0,1])$ is finite-dimensional, one can think of it as a natural discretization of $\Omega^*([0,1])$ -- see also \cite{Mnev} for the corresponding interpretation of $C_\infty^*([0,1])$ in terms of a discretization of BF-theory on the interval.
The map $\mu$ then provides a canonical way to related the discretization $C_\infty^*([0,1])$ to the original structure on $\Omega^*([0,1])$.

At this point, the following question naturally arises:
\begin{center}
{\em How to explicitly construct a homotopy inverse to $\mu: C^*_\infty([0,1]) \to \Omega^*([0,1])$?}
\end{center}
Or, to put it differently: how to provide a morphism from 
$\Omega^*([0,1])$ to its discretization?
In principle, one can again invoke the general transfer theorems of homotopical algebra, such as those established in \cite{Huebschmann-Kadeishvili,Markl}. However, this turns out to be a non-trivial task -- in particular, we do not know how to obtain explicit formulas
this way.

To circumvent this problem, we make use of the fact that, as a complex, $C^*([0,1])$ coincides with the simplicial cochains on $[0,1]$.
The relation between the dg algebra of smooth, singular cochains $C^*(M)$ on a manifold $M$, and the dg algebra of differential forms $\Omega^*(M)$,
is well-understood:
First, recall that de Rham's Theorem asserts that integration of forms
over simplices
provides a quasi-isomorphism of complexes
$$ \int: \Omega^*(M) \to C^*(M).$$
Moreover, one can prove, cf. \cite{Gugenheim0}, that the chain map $\int$ admits a refinement to an $A_\infty$ quasi-isomorphism
between $(\Omega^*(M),d,\wedge)$ and $(C^*(M),\delta,\cup)$, which implies that integration induces an isomorphism of algebras at the cohomology level. A rather explicit refinement in terms of Chen's iterated integrals was provided by Gugenheim in \cite{Gugenheim}. In our setting, Gugenheim's construction yields an explicit $A_\infty$ quasi-isomorphism
$$ \lambda: \Omega^*([0,1]) \to C^*_\cup([0,1]).$$

We combine $\mu$ and $\lambda$ to produce the following diagram 
$$
\xymatrix{
	&& \Omega^*([0,1]) \ar@/^1.5pc/[dddrr]^{\lambda} 
	\ar@/^1.5pc/[dddll]_(0.35){\varphi}
	&& \\
	&&  && \\
	&&&& \\
	C_\infty^*([0,1]) \ar@/_1.5pc/[rrrr]_{\exp}^\cong
	\ar@/^1.5pc/[uuurr]^{\mu}
	&&&& C_\cup^*([0,1]) \ar@/_1.5pc/[llll]_{\log}^\cong\ar@/^1.5pc/[uuull]^{\gamma}
}
$$
which yields, in particular, an explicit $C_\infty$ morphism $\varphi$
from $\Omega^*([0,1])$ to its discretization $C^*_\infty([0,1])$.
Let us briefly describe the constituencies of the diagram:
\begin{itemize} 
\item[(1)] $\mu$ is the quasi-isomorphism from $C^*_\infty([0,1])$ to $\Omega^*([0,1])$ obtained by homotopy transfer.
\item[(2)] $\lambda$ is a special case of Gugenheim's $A_\infty$ morphism between
differential forms and smooth, singular cochains.
	\item[(3)] $\exp$ is an isomorphism of $A_\infty$ algebras, defined as the composition 
	$$ \exp: C^*_\infty([0,1]) \stackrel{\mu}{\to} \Omega^*([0,1]) \stackrel{\lambda}{\to} C_\cup^*([0,1]),$$
	and $\log=(\exp)^{-1}$ is its inverse.
	Both have a simple description in Taylor coefficients: their linear part is the identity, and the higher Taylor coefficients vanish unless all their arguments are one-cochains, in which case we recover the Taylor coefficients of the functions $\exp(x)-1$ and $\log(x+1)$ respectively.	
	
	\item[(4)] $\gamma$ is an $A_\infty$ morphism right inverse to $\lambda$, defined as the composition 
	$$ \gamma: C^*_\cup([0,1]) \stackrel{\log}{\to} C^*_\infty([0,1]) \stackrel{\mu}{\to} \Omega^*([0,1]).$$
    We derive explicit formulas for $\gamma$ in Proposition \ref{proposition: gamma} in Subsection \ref{subsection: lambda}.
    
    \item[(5)] $\varphi$ is a $C_\infty$ morphism left inverse to $\mu$, defined as the composition 
	$$ \varphi: \Omega^*([0,1]) \stackrel{\lambda}{\to} C^*_\cup([0,1]) \stackrel{\log}{\to} C^*_\infty([0,1]).$$
    The morphism $\varphi$ is the main object of this paper. We shall derive explicit, as well as recursive, formulas for $\varphi$, and find interesting connections with Lie theory and the Magnus expansion.
   \end{itemize}
    Our main results concerning $\varphi$ are:
    \begin{itemize} 
    	\item[(i)] $\varphi$ is indeed a $C_\infty$ morphism, which is not evident from its definition as the composition of two $A_\infty$ morphism. We prove this directly in Corollary \ref{corollary: C-infty} in Subsection \ref{subsection: varphi} and indirectly in Corollary \ref{corollary: uniqueness of gamma}, Subsection \ref{subsection: uniqueness}. For the direct argument, we show the identity $\varphi_n=\lambda_n\circ E^*$, where $\varphi_n,\lambda_n$ are the $n$'th Taylor coefficients of $\varphi$ and $\lambda$ respectively, and $E^*$ is a canonical projector vanishing on the image of the shuffle product. More precisely, $E^*$ is the adjoint to the first Eulerian idempotent $E$, which is a canonical projector from the tensor algebra onto the free Lie algebra, see \cite{Reutenauer}. 
    	\item[(ii)] $\varphi$ (as well as $\lambda$, $\exp$, $\log$) is uniquely characterized by the property that its higher Taylor coefficients vanish whenever one of their arguments is a zero-form. As a consequence, we show that $\varphi$ coincides with the morphism constructed via homotopy transfer formulas, as in \cite{Huebschmann-Kadeishvili,Markl}.
		\item[(iii)] After scalar extension by a dg Lie algebra, our explicit formulas for $\varphi$ recover known formulas for the Magnus expansion, see \cite{Magnus,Iserles-Norsett,Mielnik-Plebanski}.
		
    \end{itemize}  

To add some perspective on the previous diagram, we remark that it continues to make sense after we replace the interval/one-simplex $[0,1]$ by any manifold/simplicial set $M$. We already observed this for Gugenheim's $A_\infty$ morphism $\lambda:\Omega^*(M)\to C^*_\cup(M)$. The $C_\infty$ algebra $C_\infty^*(M)$ and the $C_\infty$ morphism $\mu:C^*_\infty(M)\to\Omega^*(M)$ can be defined as before via homotopy transfer (along Dupont's contraction, see \cite{Dupont2}). Finally, the rest of the diagram can be defined as before: in particular, $\exp:C^*_\infty(M)\xrightarrow{\mu}\Omega^*(M)\xrightarrow{\lambda} C^*_\cup(M)$ continues to be an $A_\infty$ isomorphism with linear part the identity. We remark that the previous diagram is natural in $M$, and in particular our formulas continue to apply when $M$ is a one-dimensional simplicial set. To the authors' knowledge, it is both an hard and interesting open problem to better understand the higher dimensional case. Let us
 point out some topics to which this problem is related:

\begin{itemize}
\item {\em Rational homotopy theory}: the composition of the functor $C^*_\infty(-)$ and the Chevalley-Eilenberg functor from $C_\infty$ algebras to (complete) dg Lie algebras yields a functor $L(-)$ from simplicial sets to (complete) dg Lie algebras, representing the underlying Quillen's equivalence from rational homotopy theory, see \cite{Lie models}. In this context, the $A_\infty$ isomorphism $\exp$ from the previous diagram corresponds to an isomorphism of dg algebras $\Omega C_*(M)\xrightarrow{\cong}\mathcal{U}(L(M))$, where $\mathcal{U}(L(M))$ is the universal enveloping of $L(M)$ and $\Omega C_*(M)$ is the natural simplicial analog of the Adams-Hilton model studied in \cite{Adams-Hilton,Majewski}. More concretely, $\Omega C_*(M)$ is the cobar construction of the dg coalgebra $C_*(M)$ of normalized chains on $M$. It would be interesting to compare the cocommutative dg Hopf algebra structure induced on $\Omega C_*(M)$ by the previous isomorphism and the one studied in the papers \cite{Adams-Hilton}, \cite[App. D]{Majewski}, which is cocommutative only up to homotopy. This would open up the possibility to use the results of the latter reference to get explicit comparisons between $L(M)$ and other classical models for the rational homotopy type of $M$. We briefly address the particular case of $M=[0,1]$ in Remark \ref{rem:AW diagonal}, Subsection \ref{subsection: uniqueness}. In this case, the dg Lie algebra $L([0,1])$ recovers the well-studied Lawrence-Sullivan model of the interval \cite{Lawrence-Sullivan} (as was proved in \cite{Cheng-Getzler}, thus answering a question posed by Sullivan).

\item {\em Derived deformation theory}: the functor $L(-)$ from the previous paragraph is a left adjoint to Getzler's higher generalization of the Deligne groupoid functor, see \cite{Getzler,Getzler-Ham} and the first author's PhD Thesis. In this context, the previous diagram encodes the equivalences between three models of the derived deformation theory associated to a dg Lie algebra $\g$: the one considered by Hinich in \cite{hinichdgC}, the one considered by  Getzler in \cite{Getzler} and the one considered by Behrend and Getzler in \cite[Section 8]{Beherend-Getzler} (the latter makes sense only for dg associative algebras, so either we assume that the Lie bracket on $\g$ is the commutator of an associative product or we replace $\g$ by its universal enveloping algebra). In the one-dimensional case, the three $L_\infty$ algebras $\Omega^*([0,1];\g)$, $C^*_\infty([0,1];\g)$ and $C^*_\cup([0,1];\g)$ obtained via scalar extension by $\g$ (again, the latter makes sense only in the associative setting) encode, via the respective Maurer-Cartan equations (cf. for instance \cite[Section 7]{Fiorenza-Manetti}), three different notions of gauge/homotopy equivalence between Maurer-Cartan elements in the dg Lie/associative algebra $\g$. As is well-known, these three equivalence relations coincide, and our diagram established this fact by providing direct comparisons.

\item {\em Mathematical physics}:  
Let $M$ be an oriented manifold and $\g$ a Lie algebra. From
these data one obtains
a topological field theory on $M$, known as BF-theory.
Its classical action functional reads
\begin{eqnarray*}
 S_{\mathrm{BF}}: \Omega^1(M;\g)\oplus \Omega^{n-2}_{c}(M;\g^*) &\to& \mathbb{R},\\
(A,B) &\mapsto &  \int_M <B,dA + \frac{1}{2}[A,A]> = \int_M <B,F_A>,
\end{eqnarray*}
where $<\cdot,\cdot>: \g^*\times \g \to \mathbb{R}$ is the natural pairing.
In this theory, the induced $L_\infty$ algebra structure on Whitney forms with values in $\g$
corresponds to the tree-level effective action functional $S_{\textrm{eff}}^{\textrm{tree}}$ on the space of infrared fields, obtained by integrating out ultraviolet fields, see \cite{Mnev}.
Moreover, 
the Wilson loop observable $W_\gamma$, given by
$$ W_\gamma(A,B) = \mathrm{tr}(\mathrm{hol}_\gamma(A)),$$
where $\gamma: S^1 \to M$ is a loop and $\mathrm{hol}_\gamma(A)$ is the holonomy
of the connection $A$ around $\gamma$, can be expressed in terms of Chen's iterated integrals. For the case of $[0,1]$, one is therefore naturally led to consider $\lambda$ and $\log \circ \lambda$. We remark that several higher dimensional generalizations
of the Wilson loop observables were constructed and studied in the mathematical physics literature,
see for instance \cite{CR,Pavel-obs}.
\end{itemize}

Let us conclude the introduction of this paper with a brief outline of its structure.

In Section \ref{section: background}, we recall the $C_\infty$ algebra structure on the space of Whitney forms $C^*([0,1])$, along with the $C_\infty$ morphism $\mu$ from $C^*([0,1])$ to $\Omega^*([0,1])$, and Gugenheim's morphism $\lambda$ from differential forms to simplicial cochains. In Subsection \ref{subsection: comparison}, we compute $\exp:=\lambda \circ \mu: C^*_\infty([0,1]) \to C^*_\cup([0,1])$, as well as its inverse $\log$.
Moreover, we work out the morphism
$$ \gamma:C^*_\cup([0,1]) \stackrel{\log}{\to} C^*_\infty([0,1]) \stackrel{\mu}{\to} \Omega^*([0,1])$$
in Subsection \ref{subsection: one-sided}.

 In Section \ref{section: varphi}, we introduce and study the morphism
 $$ \varphi: \Omega^*([0,1]) \stackrel{\lambda}{\to} C_\cup^*([0,1]) \stackrel{\log}{\to} C_\infty^*([0,1]).$$
 We start in \ref{subsection: varphi} by establishing
 explicit formulas for $\varphi$.
The first formula expresses the $n$'th Taylor coefficient $\varphi_n$ of $\varphi$ in terms of an integral over the geometric $n$-simplex, see Theorem \ref{theorem: Eulerian}. In Proposition \ref{theorem: C_oo} we express the Taylor coefficients of $\varphi$ in terms of the adjoint $E^*$ to the first Eulerian idempotent. Together with a symmetry property of $E^*$, Proposition \ref{theorem: C_oo} implies that $\varphi$ is a morphism of $C_\infty$ algebras, see Corollary \ref{corollary: C-infty}. 
In Theorem \ref{theorem: recursion}, we establish a recursive description of $\varphi$, which is inspired by \cite{Iserles-Norsett}.
In Subsection \ref{subsection: uniqueness}, we establish a uniqueness result for morphisms between (very) special $A_\infty$ algebras, and $C_\infty$ algebras, respectively.
This result applies to $\varphi$, and as consequences we deduce that 1) $\varphi$ coincides with the morphism obtained from homotopy transfer and 2) we obtain a second proof that it is a morphism of $C_\infty$ algebras.

In Section \ref{section: pushforward}, we study the pushforward along $\varphi$, after extension of scalars to a dg algebra $A$, or a dg Lie algebra $\g$, respectively. In the latter case, we recover
known formulas for the Mangus expansion.

The two appendices provide background material on $A_\infty$, $L_\infty$ and $C_\infty$ algebras.

\begin{acknowledgements}
We are grateful to Fabian Burghart, Pavel Mn\"ev and
Jan Steinebrunner for generously sharing a draft of their joint work \cite{BMS}, in which they independently derive Proposition  \ref{proposition: ODE1} in Subsection \ref{subsection: coefficients dg Lie algebra} below. Moreover,
F.S. thanks Pavel Mn\"ev for interesting discussions related to the topics of this paper, and
R.B. thanks the University of Luxembourg for hospitality during his visit in December 2015. Finally, we are in debt to James D. Stasheff for numerous valuable comments on a draft version of this paper. 

\end{acknowledgements}

\section{Differential forms on the interval}\label{section: background}

We briefly review three algebraic structures
associated to the interval $[0,1]$, as well as the known morphisms between them.
We refer the reader to the Appendix
for an explanation of our notation and terminology concerning
$A_\infty$, $L_\infty$ and $C_\infty$ algebras.

\subsection{Differential forms and Whitney forms}\label{subsection: Whitney forms}
Throughout the article, $\Omega^*([0,1])$ denotes the graded vector space
of differential forms on the closed interval $[0,1]$.
To be more precise, there are two variants of $\Omega^*([0,1])$ which we will consider:
\begin{itemize}
\item the space of real-valued, smooth differential forms on $[0,1]$, denoted by $\Omega_{\dR}^*([0,1])$, 
equipped with the structure of a commutative dg algebra via the de Rham differential $d$ and the wedge product $\wedge$.

\item the space of $\field$-polynomial forms on $[0,1]$, where $\field$ is a field of characteristic zero, denoted by $\Omega_\field^*([0,1])$: formally, $\Omega_\field^*([0,1])=\Omega_\field^0([0,1])\oplus\Omega_\field^1([0,1])=\field[t]\oplus\field[t]dt$, where $\field[t]$ is the polynomial algebra over $\field$. Again, this is a commutative dg algebra via the wedge product $p(t)\wedge q(t)=p(t)q(t)$, $p(t)\wedge q(t)dt=p(t)q(t)dt$, $p(t)dt\wedge q(t)dt=0$ and the differential $d:p(t)\mapsto p'(t)dt$ and $d:p(t)dt \mapsto 0$.
\end{itemize}
Since most of our constructions work in both contexts, we will usually
just use $\Omega^*([0,1])$ to refer to either variant.

The subcomplex of Whitney forms is the graded vector subspace of $\Omega^*([0,1])$
given by the affine functions and constant one-forms, i.e.
$$ C^*([0,1]) = C^0([0,1])\oplus C^1([0,1])=\{a t + b (1-t) \, \vert \, a,b\in \field\} \oplus \{c \, dt \, \vert \, c \in \field\}.$$

Notice that this space is closed under the differential $d$, but not under multiplication.
However, $C^*([0,1])$ can be identified with the complex of simplicial $\field$-valued cochains on the standard $1$-dimensional simplex. As such, we might equip $C^*([0,1])$ with the cup product $\cup$, which is determined by the fact that the constant function $1$ is a unit and that the relations
$$ t\cup t = t, \quad t \cup dt = 0 \quad \textrm{and} \quad dt \cup t = dt$$
hold.
The cup product is associative and compatible with $d$, hence it makes $C^*([0,1])$
into a dg algebra. We denote this dg algebra by $C^*_{\cup}([0,1])$,
and emphasize that the cup product is not graded commutative.

In order to retain some form of commutativity on $C^*([0,1])$, one can use homological perturbation theory, as done in the references \cite{Huebschmann-Kadeishvili,Markl}, to transfer the wedge product on $\Omega^*([0,1])$ down to a
homotopy associative and homotopy commutative algebra structure, i.e., a $C_\infty$-algebra
structure, on $C^*([0,1])$. We refer to the Appendix for a short reminder on these algebraic structures. To carry out the transfer
of the wedge product from $\Omega^*([0,1])$ to $C^*([0,1])$, we first need to fix suitable contraction data from $\Omega^*([0,1])$ to $C^*([0,1])$. Following \cite{Cheng-Getzler,Fiorenza-Manetti,Mnev} we consider Dupont's contraction (cf. \cite{Dupont2}), which is given by the inclusion
$ \iota: C^*([0,1]) \hookrightarrow \Omega^*([0,1])$, the chain map
\begin{eqnarray*}
 \pi: \Omega^0([0,1]) \to C^0([0,1]), \quad f &\mapsto& f(1)t + f(0)(1-t)\\
\pi: \Omega^1([0,1]) \to C^1([0,1]), \quad a(t) dt & \mapsto & \left(\int_0^1 a(\tau) d\tau \right) dt
\end{eqnarray*}
and the chain homotopy
\begin{eqnarray*}
h: \Omega^1([0,1]) \to \Omega^0([0,1]), \quad a(t)dt \mapsto t\int_0^1 a(\tau) d\tau - \int_0^t a(\tau)d\tau .
\end{eqnarray*}
We notice that the side-conditions
$$ h \circ h=0, \quad h \circ \iota=0 \quad \textrm{and} \quad \pi \circ h=0$$
are satisfied. 

The resulting homotopy algebra structure on $C^*([0,1])$ was explicitly worked out in \cite{Cheng-Getzler,Fiorenza-Manetti,Mnev}.
Below we denote by $\bs$ the suspension endofunctor on the category of graded vector spaces, see Appendix \ref{appendix: A-infty and L-infty} for our conventions related to $A_\infty$ algebras.

\begin{theorem}[\cite{Cheng-Getzler,Fiorenza-Manetti,Mnev}]\label{theorem: induced structure}
The maps $m_{n+1}:\bs C^*([0,1])^{\otimes n+1}\to\bs C^*([0,1])$, ${n\geq1}$, determined by
\begin{itemize}
\item unitality with respect to the constant function $1$,
\item $m_2(\bs t\otimes\bs t)=\bs t$, $m_2(\bs t\otimes\bs dt) =\frac{1}{2}\bs dt$, $m_2(\bs dt\otimes\bs t)=-\frac{1}{2}\bs dt$,
\item for $n>1$ the map $m_{n+1}$ vanishes unless precisely one of its arguments is a function and one has
$$ m_{n+1}((\bs dt)^{\otimes i}\otimes \bs t \otimes (\bs dt)^{\otimes n-i}) = \big((-1)^{i+1}{n \choose i} \frac{B_n}{n!}\big) \bs dt,$$

\end{itemize}
equip the complex
$(C^*([0,1]),d)$ with the structure of a unital $C_\infty$-algebra.
Here $B_n$ is the $n$'th Bernoulli number, defined in terms of the generating function
 $$ \frac{z}{e^{z}-1}=\sum_{n \ge 0}\frac{z^n}{n!}B_n.$$
\end{theorem}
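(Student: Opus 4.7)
The plan is to apply homological perturbation to Dupont's contraction $(\iota,\pi,h)$, following the path laid out in \cite{Cheng-Getzler,Fiorenza-Manetti,Mnev}. The homotopy transfer theorem produces an $A_\infty$ structure on $C^*([0,1])$ whose operations $m_n$ are sums, over planar rooted binary trees with $n$ leaves, of compositions in which the leaves carry $\iota$, the internal vertices carry the wedge product, the internal edges carry $-h$, and the root edge carries $\pi$; the side conditions $h^2=0$, $h\iota=0$, $\pi h=0$ ensure that this expression is well-defined. Because $(\Omega^*([0,1]),\wedge)$ is graded commutative and Dupont's contraction is symmetric in the appropriate sense, the transferred structure refines to a $C_\infty$ structure. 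I would justify this by appealing to the general argument of \cite{Cheng-Getzler}, or equivalently by checking that each $m_n$ vanishes on non-trivial shuffle products.

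With the tree formula in hand, I would perform a degree analysis. Since $m_{n+1}$ has degree $1-n$ and $C^*([0,1])$ is concentrated in degrees $0$ and $1$, the number $k$ of one-form inputs must obey $k+1-n \in \{0,1\}$, leaving only $k \in \{n-1,n\}$; the case $k=n+1$ (all arguments are one-forms) would force the output to have degree $2$, hence vanishes trivially. The values of $m_2$ follow by direct computation from the definitions of $\iota$, $\pi$ and $\wedge$. For $n \geq 2$ and $k=n-1$ (two function inputs), I would show that $m_{n+1}$ vanishes using the key identity $h(dt)=0$, which follows from the formula for $h$ evaluated at $a\equiv 1$; a careful bookkeeping of internal edges shows that every tree contribution in this case either contains a factor $h(dt)$ or cancels in pairs.

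The substantive case is $k=n$ with a single function input $t$ in position $i$ among $n$ copies of $dt$. Fixing the position and summing over trees, the coefficient of $\bs dt$ in $m_{n+1}((\bs dt)^{\otimes i}\otimes \bs t \otimes (\bs dt)^{\otimes n-i})$ reduces to an iterated integral over a simplex, which I would recognize by induction on $n$ as a coefficient in the generating function $z/(e^z-1)$; this yields the Bernoulli expression $(-1)^{i+1}\binom{n}{i}B_n/n!$, with the sign tracking the parity of occurrences of $-h$ along the branch from the function leaf to the root. I expect the main obstacle to lie in the combinatorial identification of the tree sum with the Bernoulli generating function, together with the sign bookkeeping; a viable alternative is to guess the formulas from low-arity calculations and verify the $A_\infty$ relations by induction, though this would obscure the role of the transfer construction.
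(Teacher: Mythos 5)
Your overall route --- homotopy transfer along Dupont's contraction, with the tree-sum formulas, followed by the observation that the transferred structure is $C_\infty$ because the wedge product is commutative --- is exactly the route of the cited sources, and the $C_\infty$ refinement can indeed be quoted (Theorem \ref{th:homotopytransfer} in the Appendix, or \cite{Cheng-Getzler}). The degree analysis and the computation of $m_2$ are fine. However, the substantive content of Theorem \ref{theorem: induced structure} is the closed formula $(-1)^{i+1}\binom{n}{i}\frac{B_n}{n!}$, and your proposal stops precisely there: ``I would recognize by induction on $n$ as a coefficient in the generating function $z/(e^z-1)$'' is not an argument, since no recursion or generating-function identity for the tree sum is actually derived. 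This identification is where the cited proofs do their real work (for instance, \cite{Fiorenza-Manetti} extract a recursion for the transferred operations from the transfer formulas and match it against the Bernoulli recursion, while \cite{Mnev} analyzes the operator $h\circ(dt\wedge-)$ directly); note also that the paper itself does not reprove the theorem but quotes \cite{Cheng-Getzler,Fiorenza-Manetti,Mnev}, so your write-up has to supply this computation to count as a proof. An alternative consistent with the paper would be to verify the stated operations satisfy the $C_\infty$ relations and combine this with a uniqueness statement in the spirit of Proposition \ref{proposition: uniqueness of induced structure}, but that still requires showing the transferred structure has the stated vanishing properties and the stated $m_1,m_2$, i.e.\ some genuine computation with the contraction cannot be avoided.

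The vanishing in the case of exactly two zero-form inputs is also not established by the mechanism you propose. It is true that $h(dt)=0$, but most tree contributions contain no factor $h(dt)$, and no pairwise cancellation occurs: for $m_3(\bs t\otimes\bs t\otimes\bs dt)$ the right-nested tree gives $\pi\big(\iota t\wedge h(\iota t\wedge\iota\, dt)\big)=\pi\big(t\cdot h(t\,dt)\big)$ with $h(t\,dt)=\tfrac{t(1-t)}{2}$, which vanishes individually, not in a pair. The correct general argument is: when the output is a zero-form, the root vertex is a wedge of two zero-forms, and for $n+1\ge 3$ at least one of the two root subtrees has at least two leaves, so that factor lies in the image of $h$, which consists of functions vanishing at both endpoints; multiplying by any function preserves this property, and $\pi$ annihilates any function with $f(0)=f(1)=0$. (You need evaluation at the endpoints here, not multiplicativity of $\pi$, since $\pi(fg)\neq\pi(f)\cup\pi(g)$ in general.) Replace your ``contains a factor $h(dt)$ or cancels in pairs'' bookkeeping by this observation; as it stands, that step of the proposal is unproved.
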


We denote $C^*([0,1])$, equipped with the $C_\infty$ algebra structure
given by the maps
$(d,m_2,m_3,\dots)$, by $C^*_\infty([0,1])$.

The $C_\infty$ algebra structure on $C^*([0,1])$ comes with a quasi-isomorphism of $C_\infty$ algebras
$$\mu: C_\infty^*([0,1]) \to \Omega^*([0,1]),$$
whose linear part is the inclusion
$C^*([0,1])\hookrightarrow \Omega^*([0,1])$,
see \cite{Cheng-Getzler}. 
Explicit formulas for $\mu$ were worked out
in \cite{Fiorenza-Manetti,Mnev}. 

\begin{proposition}[\cite{Cheng-Getzler,Fiorenza-Manetti,Mnev}]
There is a $C_\infty$ morphism
$$ \mu: C^*_\infty([0,1]) \to \Omega^*([0,1])$$
whose Taylor coefficients are determined as follows:
\begin{itemize}
\item $\mu$ is unital.
\item The linear part $\mu_1$ is the inclusion.
\item For $n\geq1$, $\mu_{n+1}$ vanishes unless precisely one of the inputs is a function and one has
$$
\mu_{n+1}((\bs dt)^{\otimes i}\otimes \bs t \otimes (\bs dt)^{\otimes n-i})=\bs\left((-1)^i{n \choose i}\frac{B_{n+1}(t)-B_{n+1}}{(n+1)!} \right).
$$
\end{itemize}
Here $B_n(t)$ is the $n$'th Bernoulli polynomial, defined in terms of
the generating function
$$ \frac{z e^{tz}}{e^{z}-1} = \sum_{n\ge 1}\frac{z^n}{n!} B_n(t).$$
\end{proposition}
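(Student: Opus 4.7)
The plan is to derive the formula from the standard homological perturbation / homotopy transfer recipe. For the contraction data $(\iota,\pi,h)$ of Dupont, one has the classical tree-sum formula: for each rooted planar binary tree $T$ with $n+1$ leaves, decorate the leaves with $\iota$, the internal vertices with the wedge product $\wedge$, every internal edge with the homotopy $h$, and finally the root edge with $h$ as well; then $\mu_{n+1}$ is the signed sum over all such $T$ of the resulting multilinear map. The structure maps $m_{n+1}$ of Theorem \ref{theorem: induced structure} are obtained by the same trees but with the root edge decorated by $\pi$ instead. That this sum is a well-defined $C_\infty$ quasi-isomorphism follows from general homological perturbation theory; I would cite this and then concentrate on (a) identifying which trees contribute and (b) evaluating the resulting iterated integrals.

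The first observation is a vanishing argument. On $[0,1]$ the homotopy $h$ takes $1$-forms to $0$-forms and is zero on $0$-forms, while on this one-dimensional base a wedge of two $1$-forms vanishes. Consequently, for any internal vertex of a decorated tree to contribute non-trivially, the two inputs being wedged must consist of exactly one $0$-form and one $1$-form, and the output of $h$ at each internal edge (and at the root edge) is automatically a $0$-form. Hence every internal edge carries a $0$-form, and a simple count on inputs at the $n$ vertices (there are $2n$ input slots, of which $n-1$ come from internal edges as $0$-forms and the remaining $n+1$ from the leaves) forces precisely one of the $n+1$ leaf decorations to be a $0$-form. This proves the vanishing of $\mu_{n+1}$ unless exactly one of its arguments is a function, and reduces the task to the case of inputs $(dt)^{\otimes i}\otimes t\otimes (dt)^{\otimes n-i}$.

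For that case, only very specific trees survive: at each vertex on the path from the root down to the $t$-leaf, the "$0$-form side" is either the path below (an internal edge carrying a $0$-form) or the $t$-leaf itself, while the "$1$-form side" must be a $dt$-leaf. One then reads off that the surviving terms are indexed by an interleaving of the $i$ left-of-$t$ and $n-i$ right-of-$t$ copies of $dt$ along the backbone path, producing a coefficient $\binom{n}{i}$ together with a sign $(-1)^i$ reflecting the transposition of $t$ past the $i$ copies of $dt$ to its left. The accompanying iterated integral is a nested application of the explicit formula $h(a(t)\,dt) = t\int_0^1 a\,d\tau-\int_0^t a\,d\tau$ starting from $a(t)\equiv1$, and I would evaluate it by induction on $n$, using that $h$ sends $t^k\,dt$ to a primitive of $t^k$ with prescribed boundary behaviour, and that this is exactly the recursion characterising the normalized Bernoulli polynomials: the generating function $\tfrac{ze^{tz}}{e^z-1} = \sum_{n\ge1}\tfrac{z^n}{n!}B_n(t)$ satisfies $\tfrac{d}{dt}B_{n+1}(t)=(n+1)B_n(t)$ together with $\int_0^1 B_n(\tau)d\tau=0$ for $n\geq 1$, which mirror exactly the two summands in the definition of $h$.

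The main obstacle is the last combinatorial step: carefully tracking signs and the $\binom{n}{i}$ factor so that the sum over interleavings collapses to the clean closed form $(-1)^i\binom{n}{i}(B_{n+1}(t)-B_{n+1})/(n+1)!$. I would handle this by induction on $n$: the recursion produced by stripping off one vertex of the tree (attaching an extra $dt$-leaf on the left or on the right of the existing tree) matches exactly the Pascal-type recursion $\binom{n}{i}=\binom{n-1}{i-1}+\binom{n-1}{i}$ together with the integration step $\int_0^t B_n(\tau)d\tau = (B_{n+1}(t)-B_{n+1})/(n+1)$; this gives the result. Alternatively, having established the vanishing pattern and that both $\mu$ (from transfer) and the right-hand-side formula satisfy the same inductive recursion determined by the $C_\infty$-relations together with the prescribed linear part $\mu_1=\iota$, a uniqueness argument as in the later Subsection \ref{subsection: uniqueness} identifies them.
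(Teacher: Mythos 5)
The paper does not actually prove this proposition: it is quoted from \cite{Cheng-Getzler,Fiorenza-Manetti,Mnev}, and the paper only supplies related machinery elsewhere (the $C_\infty$ homotopy transfer theorem, Theorem \ref{th:homotopytransfer}, in Appendix \ref{appendix: C-infty}; the vanishing-pattern argument for the transferred morphism in the lemma preceding Corollary \ref{corollary:homotopy transfer}; and the uniqueness statement of Proposition \ref{proposition: uniqueness of induced structure}). Your proposal is therefore a reconstruction rather than a parallel of an in-paper argument, and it is a sound one: it is essentially the route of the cited references. The tree-sum transfer formula, the degree/counting argument forcing exactly one zero-form input, the identification of the surviving ``caterpillar'' trees (giving the $\binom{n}{i}$ count), and the observation that iterating $h(a(t)dt)=t\int_0^1 a\,d\tau-\int_0^t a\,d\tau$ reproduces the normalized Bernoulli polynomials (one checks $h\bigl(\tfrac{B_{n+1}(t)-B_{n+1}}{(n+1)!}\,dt\bigr)=-\tfrac{B_{n+2}(t)-B_{n+2}}{(n+2)!}$, matching $\int_0^1 B_m=0$ and $B_{m+1}'=(m+1)B_m$) are all correct. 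Two caveats. First, your heuristic for the sign $(-1)^i$ is slightly off: in the suspended picture $\bs dt$ has degree $0$ and $\bs t$ degree $-1$, so transposing $\bs t$ past copies of $\bs dt$ costs nothing; the sign in fact comes from the $(-1)^{|a|}$ factors in $Q_2(\bs a\otimes\bs b)=(-1)^{|a|}\bs(a\wedge b)$ (one $-1$ for each $dt$ attached as a left input along the backbone), together with the sign conventions for the homotopy in the transfer formula -- but since you propose to nail the bookkeeping by induction via Pascal's rule, this does not derail the argument. Second, the claim that the transferred morphism is $C_\infty$ (not merely $A_\infty$) is not ``general homological perturbation theory''; it is exactly the Cheng--Getzler refinement, i.e.\ Theorem \ref{th:homotopytransfer}, and should be cited as such. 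Your fallback via uniqueness is also viable, but note it still requires checking that the explicit Bernoulli formulas satisfy the recursion extracted from the $C_\infty$ relations (as in the proof of Proposition \ref{proposition: uniqueness of induced structure}), so it does not shortcut the computation.
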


\subsection{Gugenheim's $A_\infty$-morphism $\lambda$}\label{subsection: lambda}

Let $X$ be a smooth manifold.
In \cite{Gugenheim} Gugenheim constructed an $A_\infty$ quasi-isomorphism 
$\lambda_X$ from
the de Rham dg algebra
$\Omega^*_{\dR}(X)$ of smooth, real-valued differential forms on $X$ to the dg algebra 
of singular, smooth $\mathbb{R}$-valued cochains on $X$. The construction relies on Chen's theory
of iterated integrals \cite{Chen1}, see also the exposition in \cite{Abad-Schaetz_integration}.

We obtain the following result when we specialize Gugenheim's construction to $X=[0,1]$:

\begin{theorem}
There is a unital $A_\infty$-morphism
$ \lambda: \Omega^*([0,1]) \to C_{\cup}^*([0,1])$
whose Taylor coefficients are determined as follows:
\begin{itemize}
\item The linear part $\lambda_1$ is the chain map $\pi$ from Subsection \ref{subsection: Whitney forms}.
\item For $n>1$, $\lambda_n$ vanishes on tensor products that contain a factor which is a zero-form.
\item For $n\ge 1$ we have
$$ \lambda_n(\bs a_1(t)dt \otimes \cdots \otimes \bs a_n(t)dt)=
\left(\int\limits_{0\le t_1 \le \cdots \le t_n \le 1} a_1(t_1) \cdots a_n(t_n) dt_1\cdots dt_n\right) \bs dt.$$
\end{itemize}
\end{theorem}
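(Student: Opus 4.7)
The plan is to specialize Gugenheim's construction \cite{Gugenheim} of the $A_\infty$ quasi-isomorphism $\lambda_X \colon \Omega^*_{\dR}(X) \to C^*_{\mathrm{sing},\smooth}(X)$ to the case $X = [0,1]$, and then identify its image inside Whitney forms via the standard correspondence between simplicial cochains on the $1$-simplex and the two-dimensional space of affine functions plus constant $1$-forms. Recall that Gugenheim's Taylor coefficient $\lambda_n(\omega_1 \otimes \cdots \otimes \omega_n)$, paired with a smooth singular $q$-simplex $\sigma \colon \simplicial^q \to X$ subject to the degree constraint $q = \sum_i |\omega_i| - (n-1)$, is defined in terms of Chen's iterated integrals of the pullbacks $\sigma^*\omega_i$. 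Unitality, together with the identification of $\lambda_1$ with the classical de Rham integration map, are intrinsic to Gugenheim's construction.

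Next I would restrict to $X = [0,1]$. The only non-degenerate simplices are then the two vertices ($q=0$) and the edge ($q=1$). Combined with $|\omega_i| \in \{0,1\}$, the degree constraint forces exactly one of two configurations to possibly yield a nonzero output: (a) $q = 1$ with all $\omega_i$ being $1$-forms, or (b) $q = 0$ with exactly one $\omega_i$ a $0$-form and the remaining $n-1$ being $1$-forms. For the linear part ($n=1$), both configurations simply reproduce the map $\pi$ from Subsection \ref{subsection: Whitney forms}: evaluation of a function at the endpoints and integration of a $1$-form over $[0,1]$, respectively.

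The vanishing claim for $n \geq 2$ follows by inspecting configuration (b): the evaluation takes place on a constant map $\sigma \colon \simplicial^0 \to [0,1]$, and the pullback $\sigma^*$ of any $1$-form is zero; since configuration (b) for $n \geq 2$ always involves at least one $1$-form input, the Chen integrand vanishes identically. For the iterated integral formula in configuration (a), I would substitute $\sigma = \id_{[0,1]}$ into Chen's iterated integral for $1$-forms along a path: since $\sigma^*(a_i(t)\, dt) = a_i(t)\, dt$, the iterated integral over the ordered simplex $\{0 \le t_1 \le \cdots \le t_n \le 1\}$ produces exactly the formula in the statement, and under the Whitney identification the resulting $1$-cochain reads $\bigl(\int \cdots\bigr)\, \bs dt$.

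The main technical hurdle I anticipate is the careful verification that Gugenheim's general formula reduces to the plain Chen iterated integral, without additional combinatorial factors, once specialized to a one-dimensional target. This should be essentially automatic, since Chen's iterated integrals are the natural model for $A_\infty$ cochain operations along paths, but it does require matching the simplicial subdivision and ordering conventions in Gugenheim's definition with those of the standard iterated integral, together with tracking the sign and normalization coming from the identification $\bs dt \leftrightarrow$ (edge cochain).
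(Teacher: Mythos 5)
Your route is legitimate but genuinely different from the paper's: you derive the theorem as a specialization of Gugenheim's general $A_\infty$ de Rham theorem, whereas the paper takes the stated formulas as the \emph{definition} of $\lambda$ and verifies the $A_\infty$ relations directly, case by case (all inputs one-forms: the relation lands in degree two and is vacuous; two or more zero-forms: it reduces to multiplicativity of $\pi$ with respect to $\cup$; exactly one zero-form among one-forms: it is an instance of Stokes' theorem on the ordered simplex). Two points in your plan deserve to be made explicit if you pursue it. First, Gugenheim's morphism lands in smooth singular cochains, not in $C^*_\cup([0,1])$; you must compose with the restriction map to the cochains of the standard triangulation (the two vertices and the identity edge) and observe that this restriction is a strict map of dg algebras (the simplicial cup product is given by the same front-face/back-face formula), so the composite is still an $A_\infty$ morphism -- your phrase ``identify its image inside Whitney forms'' glosses over this, and note also that the target is three-dimensional, not two-dimensional. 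Second, your degree bookkeeping and constant-path argument for the vanishing on zero-form inputs are the right reasons, but they presuppose the precise form of Gugenheim's Taylor coefficients on $0$-simplices, which is exactly the ``technical hurdle'' you defer. What each approach buys: yours is shorter given the external machinery and explains conceptually where $\lambda$ comes from; the paper's direct Stokes-based verification is self-contained and, importantly, is a purely algebraic identity, so it also proves the variant for $\field$-polynomial forms over an arbitrary field of characteristic zero (cf. the Remark following the theorem), which a specialization of the smooth-manifold theorem does not immediately cover.
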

We provide a direct proof of this fact below.
\begin{remark}
We remark that the previous theorem remains true when $\Omega^*([0,1])=\Omega^*_\field([0,1])$ is the dg algebra of $\field$-polynomial forms: in this case, given $p(t_1,\ldots,t_n)\in\field[t_1,\ldots,t_n]$ and $s\in\field$, the integral $\int_{0\leq t_1\leq\cdots\leq t_n\leq s}p(t_1,\ldots,t_n)dt_1\cdots dt_n$ can be evaluated formally by setting
$$\int_{0\leq t_1\leq\cdots\leq t_n\leq s}t_1^{l_1-1}\cdots t_n^{l_n-1}dt_1\cdots dt_n=
\frac{s^{l_1+\cdots+l_n} }{l_1(l_1+l_2)\cdots (l_1+\cdots + l_n)}$$
for all positive intgers $l_1,\ldots,l_n$. 
\end{remark}
\begin{proof}
Let us evaluate the defining relations for $\lambda$ to be an $A_\infty$ morphism
on a tensor product of elements in $\bs \Omega^*([0,1])$. We do this by considering three separate cases, which cover all possibilities.

First, suppose all factors are one-forms. Then by degree reasons, the defining relation takes values in the component of degree two of $C^*([0,1])$, which is zero.

The second case to consider is that two or more of the factors are zero-forms. Since
$\lambda_n$ vanishes for $n>1$ if one of the inputs is a zero-form, the defining relation is trivially satisfied in this case as well, unless we consider
precisely $\bs f_1(t) \otimes \bs f_2(t)$. Then the defining relation for $\lambda$ to be an $A_\infty$ morphism reads
$$\pi(f_1(t)f_2(t)) = \pi(f_1(t))\cup  \pi(f_2(t)),$$
which follows immediately from the definitions of $\pi$ and $\cup$.

Finally, we consider an element of the form
$$\bs a_1(t)dt \otimes \cdots \otimes \bs a_i(t)dt \otimes \bs f(t) \otimes \bs a_{i+1}(t)dt \otimes \cdots \otimes \bs a_n(t) dt$$
with $n>0$
and work out the defining relations of $\lambda$ being an $A_\infty$ morphism, evaluated on such an element.

If $0<i<n$ we obtain
\begin{eqnarray*}
&& \int\limits_{{0\le t_1 \le \cdots \le t_{n+1} \le 1}} a_1(t_1) \cdots a_{i}(t_i) \left(\frac{d f}{dt}(t_{i+1})\right) a_{i+1}(t_{i+2})\cdots a_n(t_{n+1}) dt_1\cdots dt_{n+1} \\
&& \stackrel{!}{=} \int\limits_{{0\le t_1 \le \cdots \le t_n \le 1}} a_1(t_1) \cdots a_i(t_i) \Big(f(t_{i+1})a_{i+1}(t_{i+1}) \Big) a_{i+2}(t_{i+2}) \cdots a_n(t_n) dt_1\cdots dt_n \\
&& \,\,\,\, - \int\limits_{{0\le t_1 \le \cdots \le t_n \le 1}} a_1(t_1) \cdots a_{i-1}(t_{i-1}) \Big(a_{i}(t_i)f(t_i)\Big) a_{i+1}(t_{i+1})\cdots a_n(t_n) dt_1\cdots dt_n 
\end{eqnarray*}
which is a consequence of Stokes theorem.
For the extremal case $i=0$, we obtain,
\begin{eqnarray*}
&& \int\limits_{{0\le t_1 \le \cdots \le t_{n+1} \le 1}} \left(\frac{df}{dt}(t_1) \right) a_1(t_2) \cdots a_n(t_{n+1}) dt_1\cdots dt_{n+1}\stackrel{!}{=}\\ 
&& \hspace{1cm}  \int\limits_{{0\le t_1 \le \cdots \le t_n \le 1}} \Big(f(t_1) a_1(t_1)\Big)a_2(t_2) \cdots a_n(t_n) dt_1 \cdots dt_n \\
&& \hspace{1cm} \,\,\,\, - f(0) \left( \int\limits_{{0\le t_1 \le \cdots \le t_n \le 1}} a_1(t_1) \cdots a_{n}(t_n) dt_1\cdots dt_n \right),
\end{eqnarray*}
while for $i=n$, we obtain
\begin{eqnarray*}
&& \int\limits_{{0\le t_1 \le \cdots \le t_{n+1} \le 1}}  a_1(t_1) \cdots a_{n}(t_{n}) \left(\frac{df}{dt}(t_{n+1}) \right)dt_1\cdots dt_{n+1}\stackrel{!}{=}\\ 
&& \hspace{1cm} \left( \int\limits_{{0\le t_1 \le \cdots \le t_n \le 1}} a_1(t_1) \cdots a_n(t_n) dt_1 \cdots dt_k \right) f(1) \\
&& \hspace{1cm} \,\,\,\, - \int\limits_{{0\le t_1 \le \cdots \le t_n \le 1}} a_1(t_1) \cdots a_{k-1}(t_{k-1}) \Big( a_n(t_n) f(t_n) \Big) dt_1\cdots dt_n .
\end{eqnarray*}
Also the latter two equations are immediate consequences of Stokes theorem.
\end{proof}
It is well known that iterated integrals behave well with respect to the shuffle product \cite{Chen1}: in the case of the interval, we have the following proposition, which we will use in the next section. 
\begin{proposition}\label{proposition: guggheneim shuffle} We denote by $p\lambda:\overline{T}(\bs \Omega^1([0,1]))\to \bs C^1([0,1])=\field$ the corestriction of the degree zero part of Gugenheim's morphism: then $p\lambda$ is a morphism of commutative algebras, where we equip $\overline{T}(\bs \Omega^1([0,1]))$ with the shuffle product $\circledast$ (cf. Appendix \ref{appendix: C-infty}).
\end{proposition}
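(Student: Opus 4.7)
The plan is to verify the identity $p\lambda(x \circledast y) = p\lambda(x) \cdot p\lambda(y)$ directly on pure tensors of one-forms, using the classical ``chamber decomposition'' of a product of ordered simplices. Since $\bs \Omega^1([0,1])$ sits in degree $0$, the Koszul signs appearing in the shuffle product $\circledast$ are all $+1$, so no sign bookkeeping is required. The target $\bs C^1([0,1]) \cong \field$ is the ground field, so the multiplication on the right-hand side is the obvious one.

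First I would fix pure tensors $x = \bs a_1(t)dt \otimes \cdots \otimes \bs a_p(t)dt$ and $y = \bs b_1(t)dt \otimes \cdots \otimes \bs b_q(t)dt$ and write out both sides by definition. The right-hand side becomes a product of two iterated integrals over the ordered simplices $\Delta_p = \{0 \le s_1 \le \cdots \le s_p \le 1\}$ and $\Delta_q = \{0 \le r_1 \le \cdots \le r_q \le 1\}$, which by Fubini equals the integral of $a_1(s_1)\cdots a_p(s_p)\, b_1(r_1)\cdots b_q(r_q)$ over $\Delta_p \times \Delta_q \subset [0,1]^{p+q}$.

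The key geometric input is that, up to a set of measure zero, the product $\Delta_p \times \Delta_q$ decomposes as a disjoint union
\[
\Delta_p \times \Delta_q \;=\; \bigsqcup_{\sigma \in Sh(p,q)} C_\sigma,
\]
where $Sh(p,q)$ denotes $(p,q)$-shuffles and each chamber $C_\sigma$ is obtained from $\Delta_{p+q}$ by the linear change of variables that interleaves the $s$'s and $r$'s in the order prescribed by $\sigma$. Applying this change of variables on each chamber converts the single integral over $\Delta_p \times \Delta_q$ into a sum over $\sigma$ of iterated integrals over $\Delta_{p+q}$, with integrand $c^\sigma_1(t_1)\cdots c^\sigma_{p+q}(t_{p+q})$, where $(c^\sigma_1,\ldots,c^\sigma_{p+q})$ is the shuffle of $(a_1,\ldots,a_p)$ and $(b_1,\ldots,b_q)$ by $\sigma$. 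This sum is precisely $p\lambda(x \circledast y)$, so the two sides agree.

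The only subtle point, and thus the ``hard part'' in the write-up, is justifying the chamber decomposition cleanly; I would either invoke it as standard (citing Chen \cite{Chen1}) or give a one-line argument noting that on the open complement of the measure-zero locus where some $s_i = r_j$, each point of $[0,1]^{p+q}$ lying in $\Delta_p \times \Delta_q$ has a unique total ordering of its coordinates, and this ordering refining the two given ones specifies the shuffle $\sigma$. Extending from pure tensors to all of $\overline{T}(\bs\Omega^1([0,1]))$ by multilinearity then completes the proof.
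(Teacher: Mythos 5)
Your proposal is correct and is essentially the paper's own argument: the paper likewise rewrites the product of the two iterated integrals as an integral over $\Delta_j\times\Delta_k$ and then uses the standard decomposition of this product into copies of $\Delta_{j+k}$ indexed by $(j,k)$-(un)shuffles, identifying the resulting sum with $\lambda_{j+k}$ evaluated on the shuffle product (your indexing by shuffles versus the paper's by unshuffles is just the substitution $\sigma\leftrightarrow\sigma^{-1}$). Your observations that all Koszul signs are $+1$ and that the chambers overlap only in measure zero are exactly the implicit points in the paper's proof, so no further changes are needed.
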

\begin{proof} Denoting by $\Delta_i=\{(t_1,\ldots,t_i)\in\R^i\,\,\mbox{s.t.}\,\, 0\leq t_1\leq\cdots\leq t_i\leq1 \}$ the $i$-dimensional simplex, we have 
	\begin{multline*} \lambda_j\big(\bs a_1(t)dt\otimes \cdots\otimes \bs a_j(t)dt\big)\cdot\lambda_k\big(\bs a_{j+1}(t)dt\otimes\cdots\otimes\bs a_{j+k}(t)dt\big)=\\= \int_{\Delta_j\times\Delta_k}a_1(t_1)\cdots a_{j+k}(t_{j+k})dt_1\cdots dt_{j+k},\end{multline*}
	Using the natural triangulation of $\Delta_j\times\Delta_k$ 
	\begin{eqnarray*}
		\coprod_{\sigma \in S(j,k)} \Delta_n \xrightarrow{\sigma} \Delta_{j}\times\Delta_{k}, \quad
		(t_1,\dots,t_n) &\stackrel{\sigma}{\mapsto}& (t_{\sigma(1)},\dots,t_{\sigma(n)}),
	\end{eqnarray*}
	where $S(j,k)$ is the set of $(j,k)$-unshuffles, we can rewrite the right hand side of the previous equation as 
	\begin{eqnarray*} && \int_{\Delta_j\times\Delta_k}a_1(t_1)\cdots a_{j+k}(t_{j+k})dt_1\cdots dt_{j+k} = \\&&= \sum_{\sigma\in S(j,k)}\int_{\Delta_n}a_{1}(t_{\sigma(1)})\cdots a_{j+k}(t_{\sigma(j+k)})dt_1\cdots dt_{j+k}\\&&= \sum_{\sigma\in S(j,k)}\int_{\Delta_n}a_{\sigma^{-1}(1)}(t_1)\cdots a_{\sigma^{-1}(j+k)}(t_{j+k})dt_1\cdots dt_{j+k}\\&&=\lambda_n\big((\bs a_1(t)dt\otimes \cdots\otimes\bs a_j(t)dt)\circledast(\bs a_{j+1}(t)dt\otimes\cdots\otimes\bs a_{j+k}(t)dt)\big),\end{eqnarray*}
	by definition of the shuffle product $\circledast$.\end{proof}

\subsection{Comparing two structures on Whitney forms}\label{subsection: comparison}

We can now combine the $C_\infty$ morphism $\mu: C^*_\infty([0,1]) \to \Omega^*([0,1])$
with the $A_\infty$ morphism $\lambda: \Omega^*([0,1]) \to C^*_\cup([0,1])$.
Since the linear part of the composition $\lambda \circ \mu$ is the identity,
we obtain an $A_\infty$ isomorphism between $C^*_\infty([0,1])$
and $C^*_\cup([0,1])$.

\begin{proposition}
The Taylor coefficients of the unital $A_\infty$ isomorphism
$$ \exp:=\lambda \circ \mu: C^*_\infty([0,1])\to C^*_\cup([0,1])$$
are determined as follows:
\begin{itemize}
\item The linear part $\exp_1$ is the identity.
\item For $n > 1$, $\exp_n$ vanishes on tensor products that contain a factor of degree $0$.
\item For $n\ge 1$, we have
$$ \exp_n(\bs dt \otimes \cdots \otimes \bs dt) = \frac{1}{n!} \bs dt.$$
\end{itemize}

The inverse $\log: C^*_\cup([0,1]) \to C^*_\infty([0,1])$
to $\exp$ is the unital $A_\infty$ isomorphism whose Taylor coefficients are determined
as follows:
\begin{itemize}
\item The linear part of $\log_1$ is the identity.
\item For $n>1$, $\log_n$ vanishes on tensor products that contain a factor of degree $0$.
\item For $n\ge 1$, we have
$$ \log_n(\bs dt \otimes \cdots \otimes \bs dt) = \frac{(-1)^{n+1}}{n} \bs dt.$$
\end{itemize}
\end{proposition}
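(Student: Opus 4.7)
The plan is to compute $\exp := \lambda \circ \mu$ via the composition formula for $A_\infty$ morphisms,
$$ \exp_n = \sum_{k \geq 1}\sum_{n_1 + \cdots + n_k = n} \pm\, \lambda_k \circ (\mu_{n_1} \otimes \cdots \otimes \mu_{n_k}), $$
and to analyse which summands survive in light of the explicit descriptions of $\mu$ and $\lambda$ recalled above. The linear part is $\exp_1 = \lambda_1 \circ \mu_1 = \pi \circ \iota$, which restricts to the identity on Whitney forms by direct inspection of $\pi$ on $at + b(1-t)$ and $c\, dt$.

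For $n > 1$ evaluated on $(\bs dt)^{\otimes n}$: since each $\mu_{n_j}$ with $n_j \geq 2$ requires exactly one function input to be non-vanishing, the only summand that survives has $k = n$ and all $n_j = 1$, yielding
$$ \exp_n\bigl((\bs dt)^{\otimes n}\bigr) = \lambda_n\bigl((\bs dt)^{\otimes n}\bigr) = \biggl(\int_{0 \le t_1 \le \cdots \le t_n \le 1} dt_1 \cdots dt_n\biggr) \bs dt = \frac{1}{n!}\bs dt $$
by the explicit formula for $\lambda_n$.

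The main technical obstacle is establishing vanishing of $\exp_n$ on tensor products containing a zero-form factor (for $n > 1$). By unitality it suffices to consider inputs with factors in $\{\bs t, \bs dt\}$ and at least one $\bs t$. For a summand to contribute, each $\mu_{n_j}$ must be non-vanishing, which forces every block containing a $\bs t$ to contain exactly one $\bs t$ and to output a zero-form (either $\bs t$ itself when $n_j = 1$, or a suspended polynomial when $n_j \geq 2$). Since $\lambda_k$ vanishes for $k > 1$ on any zero-form argument, only $k = 1$ can contribute, reducing $\exp_n$ to $\pi \circ \mu_n$. For $\mu_n$ to be non-zero on such a mixed input one needs exactly one $\bs t$ factor, in which case the explicit formula gives output $\bs p(t)$ with $p(t) = (-1)^i \binom{n-1}{i} \frac{B_n(t) - B_n}{n!}$. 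The decisive input is the Bernoulli polynomial identity $B_n(0) = B_n(1) = B_n$ valid for $n \geq 2$, which yields $p(0) = p(1) = 0$, so that $\pi(p)$ vanishes, and hence so does $\exp_n$ on this input.

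To obtain $\log = \exp^{-1}$, I would define $\log$ via the stated Taylor coefficients and verify $\log \circ \exp = \mathrm{id}$ at the level of bar coalgebra morphisms. On $(\bs dt)^{\otimes n}$ the composition formula produces
$$ (\log \circ \exp)_n\bigl((\bs dt)^{\otimes n}\bigr) = \biggl(\sum_{k \geq 1} \frac{(-1)^{k+1}}{k} \sum_{n_1 + \cdots + n_k = n} \frac{1}{n_1! \cdots n_k!}\biggr)\bs dt, $$
which equals $\delta_{n,1}\, \bs dt$ by the formal power series identity $\log(1 + (e^z - 1)) = z$. Vanishing of $(\log \circ \exp)_n$ on inputs containing a zero-form (for $n > 1$) follows from an analogous case analysis: each $\bs t$ must pass through a size-$1$ block $\exp_1 = \mathrm{id}$ into $\log_k$, forcing $k = 1$, after which the surviving factor $\exp_n$ vanishes by the previous paragraph.
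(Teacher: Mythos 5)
Your proof is correct and takes essentially the same route as the paper's: the composition collapses to $\pi\circ\mu_n$ on inputs containing a zero-form (killed by $B_n(1)=B_n$ for $n\ge 2$) and to $\lambda_n\circ\mu_1^{\otimes n}$ on $(\bs dt)^{\otimes n}$ (giving $\tfrac{1}{n!}$), exactly as in the paper. Your verification of the inverse via the power series identity $\log(1+(e^z-1))=z$ is just a spelled-out version of the check the paper dismisses as straightforward.
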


\begin{proof}
Let us first consider the map $\exp$: we already observed the assertion about the linear part.

If we evaluate $\exp_{n+1}$, $n\geq1$, on a tensor product of the form
$$(\bs dt)^{\otimes i}\otimes \bs t \otimes (\bs dt)^{\otimes n-i},$$
only the contribution from $\pi\mu_{n+1}((\bs dt)^{\otimes i}\otimes \bs t \otimes (\bs dt)^{\otimes n-i})$ can be non-zero, since all higher order terms 
of $\lambda$ map tensor products which contain a factor that is a zero-form to zero. Hence we obtain
\begin{eqnarray*}
\exp_{n+1}((\bs dt)^{\otimes i}\otimes \bs t \otimes (\bs dt)^{\otimes n-i}) &=&
(-1)^i{n \choose i}\left(\frac{B_{n+1}(1)-B_{n+1}}{(n+1)!}\right) \bs t=0, \\
\end{eqnarray*}
since $B_{n+1}(1)=B_{n+1}$ for $n\geq1$.

On the other hand, only $\lambda_{n}\mu_1^{\otimes n}$ contributes to the evaluation of $\exp_n$ on the tensor product
$ (\bs dt)^{\otimes n}$, since the higher order terms 
of $\mu$ vanish unless precisely one argument is a function, and we find
$$ \exp_n(\bs dt \otimes \cdots \otimes \bs dt) = \left(\int\limits_{0\le t_1\le \cdots \le t_n \le 1} dt_1 \cdots dt_n \right) \bs dt = \frac{1}{n!} \bs dt, $$
as desired. Finally, it is clear by degree reasons that $\exp_n$ vanishes if two or more arguments are functions.

It is straightforward to check that $\log$ as defined in the proposition is indeed the inverse to $\exp$.
\end{proof}

\subsection{A one-sided inverse to $\lambda$}
\label{subsection: one-sided}

We define an $A_\infty$ morphism $\gamma$ as the composition
$$
\xymatrix{
\gamma: C^*_\cup([0,1]) \ar[r]^{\log} & C_\infty^*([0,1]) \ar[r]^\mu & \Omega^*([0,1]).
}
$$
By construction, we have $\lambda \circ \gamma = \lambda \circ \mu \circ \log = \exp \circ \log = \mathrm{id}$.

\begin{proposition}\label{proposition: gamma}
The Taylor coefficients of the unital $A_\infty$ morphism
$$ \gamma = \mu \circ \log: C^*_\cup([0,1]) \to \Omega^*([0,1])$$
are determined as follows:
\begin{itemize}
\item The linear part $\gamma_1$ is the inclusion $C^*([0,1]) \hookrightarrow \Omega^*([0,1])$.
\item For $i\ge 0$, $j\ge 0$, we have
$$ \gamma_{i+j+1}((\bs dt)^{\otimes i}\otimes \bs t \otimes (\bs dt)^{\otimes j}) = \bs \sum_{l=0}^i {1-t \choose l}{t \choose i+j+1-l},$$
where ${\tau \choose n }= \frac{\tau(\tau-1)\cdots (\tau-n+1)}{n!}$.
\item For $n\ge 1$, we have
$$ \gamma_n(\bs dt \otimes \cdots \otimes \bs dt) = \frac{(-1)^{n+1}}{n}\bs dt.$$
\end{itemize}
\end{proposition}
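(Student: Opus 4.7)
The plan is to compute $\gamma_n=(\mu\circ\log)_n$ from the $A_\infty$ composition formula
$$\gamma_n(\bs x_1\otimes\cdots\otimes\bs x_n)=\sum_{k\ge 1}\sum_{i_1+\cdots+i_k=n}\mu_k\bigl(\log_{i_1}(\bs x_1\otimes\cdots)\otimes\cdots\otimes\log_{i_k}(\cdots\otimes\bs x_n)\bigr),$$
inserting the explicit Taylor coefficients of $\mu$ and $\log$ recorded earlier. The linear part is $\mu_1\circ\log_1=\iota$. The central simplification is that $\log_m$ with $m\ge 2$ vanishes on any tensor factor containing a zero-form, so in every contributing partition the argument $\bs t$ (when present) must appear as its own singleton block with $\log_1(\bs t)=\bs t$. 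Inputs containing the constant $\bs 1$ vanish by unitality of $\mu$ and $\log$, and inputs with two or more function factors vanish for degree reasons (the target $\bs\Omega^*$ is concentrated in degrees $-1$ and $0$), so only the three listed cases need analysis. In the all-$\bs dt$ case, the vanishing of $\mu_k$ on $(\bs dt)^{\otimes k}$ for $k\ge 2$ (no function argument is present) forces $k=1$ to be the sole surviving term, giving $\gamma_n(\bs dt^{\otimes n})=\log_n(\bs dt^{\otimes n})=\frac{(-1)^{n+1}}{n}\bs dt$ immediately.

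The substantive case is $\gamma_{i+j+1}((\bs dt)^{\otimes i}\otimes \bs t\otimes(\bs dt)^{\otimes j})$. Partitioning the $i$ left $\bs dt$'s into $a$ consecutive blocks of sizes $(i_1,\ldots,i_a)$ and the $j$ right ones into $b$ blocks $(j_1,\ldots,j_b)$, each such partition contributes
$$\prod_r\frac{(-1)^{i_r+1}}{i_r}\cdot\prod_s\frac{(-1)^{j_s+1}}{j_s}\cdot(-1)^a\binom{a+b}{a}\frac{B_{a+b+1}(t)-B_{a+b+1}}{(a+b+1)!}\,\bs dt,$$
and the main obstacle is to sum these contributions in closed form. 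I would do so via a bivariate generating function in $x$ and $u$: summing $\prod_r(-1)^{i_r+1}x^{i_r}/i_r$ over compositions of $i$ into $a$ parts gives $(\log(1+x))^a$, and absorbing the sign $(-1)^a$ into $Y:=\log\tfrac{1}{1+x}$ (and analogously setting $W:=\log(1+u)$) converts the full sum into $\sum_{a,b\ge 0}Y^aW^b\binom{a+b}{a}\frac{B_{a+b+1}(t)-B_{a+b+1}}{(a+b+1)!}\,\bs dt$.

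Regrouping by $n=a+b+1$ and applying the classical identity $\sum_{n\ge 1}\frac{B_n(t)-B_n}{n!}z^{n-1}=\frac{e^{tz}-1}{e^z-1}$ with $z=Y+W$, together with $e^{Y+W}=\frac{1+u}{1+x}$, collapses the generating function to $\frac{(1+u)^t(1+x)^{1-t}-(1+x)}{u-x}$. Writing $(1+x)=(1+x)^{1-t}(1+x)^t$ factors the numerator as $(1+x)^{1-t}\bigl[(1+u)^t-(1+x)^t\bigr]$; using $\frac{(1+u)^t-(1+x)^t}{u-x}=\sum_{r,s\ge 0}\binom{t}{r+s+1}u^rx^s$ and the binomial expansion of $(1+x)^{1-t}=\sum_{k\ge 0}\binom{1-t}{k}x^k$, reading off the coefficient of $x^iu^j$ (so $r=j$ and $k+s=i$) produces exactly $\sum_{l=0}^i\binom{1-t}{l}\binom{t}{i+j+1-l}$, as required.
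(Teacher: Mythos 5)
Your proposal is correct and follows essentially the same route as the paper: after reducing to the three relevant input types (with the all-$dt$ case and the singleton placement of $\bs t$ handled exactly as in the paper), you compute the mixed case via the same bivariate generating function, resumming the block contributions into powers of $\log(1+\cdot)$, applying the Bernoulli generating function $\frac{e^{tz}-1}{e^z-1}$, and extracting coefficients with Newton's binomial theorem. The only differences are notational (your $x,u$ and $Y,W$ versus the paper's $z,w$), so there is nothing substantive to add.
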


\begin{proof}
We introduce the following generating function
$$ F(z,w):=\sum_{i,j\ge 0}\gamma_{i+j+1}((\bs dt)^{\otimes i}\otimes \bs t \otimes (\bs dt)^{\otimes j})z^i w^j$$
and compute
\begin{eqnarray*}
 F(z,w) &=& \sum_{i,j\ge 0 }\sum_{\stackrel{i_1+\cdots+i_p= i}{j_1+\cdots+ j_q=j}} z^iw^j\frac{(-1)^{i+p}}{i_1\cdots i_p}\frac{(-1)^{j+q}}{j_1\cdots j_q}\mu_{p+q+1}((\bs dt)^{\otimes p}\otimes\bs t\otimes (\bs dt)^{\otimes q}) \\
 &=& \sum_{p,q\ge0}\log(1+z)^p\log(1+w)^q\mu_{p+q+1}((\bs dt)^{\otimes p}\otimes\bs t\otimes (\bs dt)^{\otimes q})  \\
 & =& \sum_{n\ge 0}\left(\sum_{p+q=n}\binom{n}{p}(-1)^p\log(1+z)^p\log(1+w)^q\right)\frac{B_{n+1}(t)-B_{n+1}}{(n+1)!} \\ 
&=& \sum_{n\ge 0}\frac{B_{n+1}(t)-B_{n+1}}{(n+1)!}(\log(1+w)-\log(1+z))^n \\
&=& G\left(\log\left(\frac{1+w}{1+z} \right)\right),
\end{eqnarray*}
where $G(u)$ is the formal power series
$$ G(u)=\sum_{r\ge0} \frac{B_{r+1}(t)-B_{r+1}}{(r+1)!}u^r = \frac{e^{tu}-1}{e^u-1}.$$

Hence we find
$$ F(z,w) = \frac{\left(\frac{1+w}{1+z}\right)^t -1}{\frac{1+w}{1+z}-1}=(1+z)^{1-t}\frac{1}{w-z}\left((1+w)^t - (1+z)^t \right).$$
Since $z$ and $w$ are formal variables, we can apply Newton's generalized binomial Theorem to obtain
$$
\frac{1}{w-z}\left((1+w)^t - (1+z)^t \right)=\frac{1}{w-z}\sum_{k\ge 0}{t \choose k+1}(w^{k+1}-z^{k+1})=
\sum_{r,s\ge0}{t\choose r+s+1} w^r z^s
$$
where, by definition,
$$ {t \choose k}:= \frac{t(t-1)\cdots (t-k+1)}{k!}.$$
We therefore have
$$ F(z,w) = \left(\sum_{l\ge 0}{1-t \choose l}z^l \right)\left(\sum_{r,s\ge0}{t\choose r+s+1} w^r z^s\right).$$
Consequently, the coefficient for $z^i w^j$ of $F(z,w)$
is
$$ \sum_{l=0}^i {1-t \choose l}{t \choose i+j+1-l}.$$

We conclude that
$$\gamma_{i+j+1}((\bs dt)^{\otimes i}\otimes \bs t \otimes (\bs dt)^{\otimes j}) = \bs \sum_{l=0}^i {1-t \choose l}{t \choose i+j+1-l}.$$

Since $\mu_n$ vanishes for $n>1$ if we evaluate it on a tensor product that contains only elements of degree one, the only relevant contribution to
$\gamma_n(\bs dt \otimes \cdots \otimes \bs dt)$ is 
$(\mu_1 \circ \log_n)(\bs dt \otimes \cdots \otimes \bs dt) = \frac{(-1)^{n+1}}{n}\bs dt.$
\end{proof}

\section{The $C_\infty$ morphism $\varphi$ from $\Omega^*([0,1])$ to $C^*_\infty([0,1])$}\label{section: varphi}

In this section we study the composition
$$
\xymatrix{
\varphi: \Omega^*([0,1]) \ar[r]^{\lambda} & C^*_\cup([0,1]) \ar[r]^{\log} & C^*_\infty([0,1]).
}
$$
Our main results concerning $\varphi$ are as follows:
\begin{enumerate}
\item We provide several formulas for the Taylor coefficients of $\varphi$.
\item We show that $\varphi$ is a $C_\infty$ morphism of $C_\infty$ algebras.
\item We prove that $\varphi$ is unique within a certain class of $A_\infty$ morphisms, and, as a consequence, that it coincides with the morphism obtained via homotopy transfer along Dupont's contraction, cf. Subsection \ref{subsection: Whitney forms}.
\end{enumerate}

\subsection{An explicit formula}\label{subsection: varphi}

The aim of this subsection is to make the $A_\infty$ morphism
$$ \varphi: \Omega^*([0,1]) \to C_\infty^*([0,1]),$$
defined as the composition of
$\lambda: \Omega^*([0,1]) \to C_\cup^*([0,1])$ from Subsection \ref{subsection: lambda} and $\log: C_\cup^*([0,1]) \to C_\infty^*([0,1])$ from Subsection \ref{subsection: comparison},
explicit.

\begin{definition}
The {\em descent number} $d_\sigma$ of a permutation $\sigma \in S_n$
is the non-negative integer
$$ d_\sigma := \vert \{ i\in \{1,\dots,n-1\} \textrm{ such that } \sigma(i)>\sigma(i+1)\}\vert.$$

\end{definition}

\begin{theorem}\label{theorem: Eulerian} The higher Taylor coefficients $\varphi_n$, $n\geq2$, of $\varphi$ vanish unless all of the inputs are one-forms, in which case one has
\begin{eqnarray*}
\varphi_n(\bs a_1(t)dt \otimes \cdots \otimes \bs a_n(t)dt) = 
\int\limits_{0\le t_1 \le \cdots \le t_n \le 1} 
\frac{1}{n} \sum_{\sigma \in S_n}\left(\frac{(-1)^{d_\sigma}}{{n-1 \choose d_\sigma}}
a_1(t_{\sigma(1)})\cdots a_n(t_{\sigma(n)})\right)
dt_1 \cdots dt_n \bs dt.
\end{eqnarray*}
\end{theorem}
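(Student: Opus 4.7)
My strategy is to expand $\varphi_n = (\log\circ\lambda)_n$ through the $A_\infty$ composition formula, use Proposition \ref{proposition: guggheneim shuffle} to recombine products of iterated integrals into a single iterated integral over $\Delta_n$ with a permuted integrand, and evaluate the combinatorial coefficient of each permutation via a Beta-function computation.

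First I would establish the vanishing statement. The $A_\infty$ composition formula gives $\varphi_n = \sum \frac{(-1)^{k+1}}{k} \log_k \circ (\lambda_{n_1}\otimes\cdots\otimes\lambda_{n_k})$, summed over compositions $n_1+\cdots+n_k = n$. If one of the inputs is a zero-form $\bs f(t)$, then either it lies inside a block of size $n_l > 1$ — making $\lambda_{n_l}$ vanish — or it forms a singleton block, in which case $\lambda_1 = \pi$ outputs a zero-cochain, causing $\log_k$ to vanish for $k > 1$, and the sole surviving $k = 1$ contribution reduces to $\lambda_n$, which itself vanishes on such input for $n \ge 2$. Thus $\varphi_n$ vanishes unless every input is a one-form.

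Assume then $\bs \omega_i = \bs a_i(t)\,dt$ for all $i$. Each $\lambda_{n_l}$ returns a multiple of $\bs dt$, and $\log_k$ evaluated on $k$ copies of $\bs dt$ equals $\frac{(-1)^{k+1}}{k} \bs dt$, so
\[
\varphi_n(\bs \omega_1\otimes\cdots\otimes\bs \omega_n) = \sum_{k=1}^{n}\frac{(-1)^{k+1}}{k} \sum_{n_1+\cdots+n_k=n} \prod_{l=1}^{k}\lambda_{n_l}\bigl(\bs \omega_{m_{l-1}+1}\otimes\cdots\otimes\bs \omega_{m_l}\bigr)
\]
with $m_l = n_1+\cdots+n_l$. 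By Proposition \ref{proposition: guggheneim shuffle}, the product of scalars on the right equals $\lambda_n$ applied to the iterated shuffle of the blocks. Expanding the shuffle as a sum over $(n_1,\ldots,n_k)$-shuffles, applying $\lambda_n$ — an iterated integral over $\Delta_n$ — and swapping the order of summation yields
\[
\varphi_n(\bs \omega_1\otimes\cdots\otimes\bs \omega_n) = \left(\sum_{\sigma\in S_n} c_\sigma \int\limits_{0\le t_1\le\cdots\le t_n\le 1} a_1(t_{\sigma(1)})\cdots a_n(t_{\sigma(n)})\,dt_1\cdots dt_n\right) \bs dt,
\]
where $c_\sigma = \sum \tfrac{(-1)^{k+1}}{k}$ is summed over all compositions $(n_1,\ldots,n_k)$ of $n$ satisfying $\mathrm{desc}(\sigma) \subseteq \{n_1, n_1+n_2,\ldots,n_1+\cdots+n_{k-1}\}$ (the defining condition for $\sigma$ to be a $(n_1,\ldots,n_k)$-shuffle).

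The main obstacle is the explicit evaluation of $c_\sigma$. Writing $d_\sigma = j$ with descent composition $(\alpha_1,\ldots,\alpha_{j+1})$ of $n$, compositions refining $(\alpha_1,\ldots,\alpha_{j+1})$ into $k$ parts are in bijection with tuples $(r_1,\ldots,r_{j+1})$ of positive integers with $\sum r_i = k$, where each $\alpha_i$ is split into $r_i$ parts in $\binom{\alpha_i-1}{r_i-1}$ ways. Assembling this into the generating function $\prod_i x(1+x)^{\alpha_i - 1} = x^{j+1}(1+x)^{n-j-1}$, and applying the integral identity $\sum_{k\ge 1}\tfrac{(-1)^{k+1}}{k}[x^k]f(x) = \int_0^1 (f(0)-f(-t))/t\,dt$ together with the Beta-function evaluation $\int_0^1 t^j(1-t)^{n-j-1}dt = \tfrac{j!(n-j-1)!}{n!}$ gives
\[
c_\sigma = (-1)^j\,\frac{j!\,(n-j-1)!}{n!} = \frac{(-1)^{d_\sigma}}{n\binom{n-1}{d_\sigma}},
\]
which matches the coefficient in the statement. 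The only subtleties are in carefully identifying the permutation convention under which the shuffle of blocks yields the integrand $\prod_i a_i(t_{\sigma(i)})$, and in checking that shuffle permutations are precisely those with prescribed descent set containment — both of which are straightforward once set up correctly.
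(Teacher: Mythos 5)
Your proposal is correct and takes essentially the same route as the paper's proof: expand the composition $\log\circ\lambda$ on one-forms, use Proposition \ref{proposition: guggheneim shuffle} to recombine the products of iterated integrals into a single integral over $\Delta_n$ and collect a coefficient per permutation, and evaluate that coefficient as a sum over compositions refining the descent set, exactly as in Lemma \ref{lemma: combinatorial} (your count $\binom{n-d_\sigma-1}{k-d_\sigma-1}$ is the paper's, and your Beta-integral evaluation is just a minor variant of the paper's identification with the $n$'th Taylor coefficient of $\log(1+z)(1+z)^{n-d_\sigma-1}$). The only blemish is the notational slip of writing the factor $\tfrac{(-1)^{k+1}}{k}$ alongside $\log_k$ in the composition formula, which your subsequent display corrects.
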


\begin{proof} Since both the higher Taylor coefficients of $\lambda$ and $\log$ vanish unless all of the inputs are one-forms, the first assertion is clear. When all the inputs are one-forms, by definition of $\log$ and $\lambda$ we have 
\begin{multline*}
\varphi_n(\bs a_1(t)dt \otimes \cdots \otimes \bs a_n(t)dt) =\\= \sum_{m=1}^{n}\frac{(-1)^{m+1}}{m}\sum_{i_1+\cdots + i_m=n}\left(\int_{\Delta_{i_1}}a_1(t_1)\cdots a_{i_1}(t_{i_1})dt_1\cdots dt_{i_1}\cdots\int_{\Delta_{i_k}}a_{n-i_{m}+1}(t_1)\cdots a_{n}(t_{i_m})dt_1\cdots dt_{i_m}\right) \bs dt.
\end{multline*}
According to (the proof of) Proposition \ref{proposition: guggheneim shuffle}, the right hand side of the previous equation equals $\bs dt$ multiplied by the scalar
\begin{eqnarray*}
 \sum_{\sigma \in S_n}\left( \sum_{\substack{i_1+\cdots + i_m=n\\ \sigma \in S(i_1,\dots,i_m)}}\frac{(-1)^{m+1}}{m} \right) \int_{\Delta_n}a_1(t_{\sigma(1)})\cdots a_n(t_{\sigma(n)}) dt_1 \cdots dt_n.
\end{eqnarray*}
Hence the proof is completed by the following lemma. 
\end{proof}

\begin{lemma}\label{lemma: combinatorial}
Given a positive integer $n$ and a permutation $\sigma\in S_n$, we have
$$ 
\sum_{\substack{i_1+\cdots + i_m=n\\ \sigma \in S(i_1,\dots,i_m)}}\frac{(-1)^{m+1}}{m} = \frac{(-1)^{d_\sigma}}{n {n-1 \choose d_\sigma}},$$
where the sum runs over all ordered partitions $i_1+\cdots+ i_m=n$ of $n$ such that $\sigma$ is an $(i_1,\ldots,i_m)$-unshuffle.
\end{lemma}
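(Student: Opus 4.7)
The plan is to reinterpret the sum as an alternating sum over subsets of $\{1,\ldots,n-1\}$ containing the descent set of $\sigma$, and then to evaluate it via the classical beta function identity $\int_0^1 x^a(1-x)^b\, dx = \frac{a!\,b!}{(a+b+1)!}$.

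First, I would note that an ordered partition $i_1+\cdots+i_m=n$ is determined by its set of cut points $S=\{i_1,i_1+i_2,\ldots,i_1+\cdots+i_{m-1}\}\subseteq\{1,\ldots,n-1\}$ of size $m-1$; and the condition $\sigma\in S(i_1,\ldots,i_m)$ (i.e., $\sigma$ is increasing on each block) is equivalent to requiring $D(\sigma)\subseteq S$, where $D(\sigma)$ is the descent set of $\sigma$. Under this dictionary, and using $(-1)^{m+1}=(-1)^{|S|}$, the left-hand side of the lemma rewrites as
\[
\sum_{\substack{S\subseteq\{1,\ldots,n-1\}\\ D(\sigma)\subseteq S}}\frac{(-1)^{|S|}}{|S|+1}=\sum_{k=d_\sigma}^{n-1}\binom{n-1-d_\sigma}{\,k-d_\sigma\,}\frac{(-1)^{k}}{k+1},
\]
where I have grouped subsets by their cardinality $k$ and chosen the $k-d_\sigma$ extra cut points outside $D(\sigma)$.

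Next, I would substitute $j=k-d_\sigma$ and pull out the sign $(-1)^{d_\sigma}$ to get
\[
(-1)^{d_\sigma}\sum_{j=0}^{n-1-d_\sigma}\binom{n-1-d_\sigma}{j}\frac{(-1)^{j}}{j+d_\sigma+1}.
\]
Writing $\frac{1}{j+d_\sigma+1}=\int_0^1 x^{j+d_\sigma}\,dx$ and exchanging sum and integral reduces the alternating sum to
\[
\int_0^1 x^{d_\sigma}(1-x)^{n-1-d_\sigma}\,dx=\frac{d_\sigma!\,(n-1-d_\sigma)!}{n!}=\frac{1}{n\binom{n-1}{d_\sigma}},
\]
which is exactly the right-hand side (up to the already extracted sign $(-1)^{d_\sigma}$).

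I do not expect any real obstacle: the only substantive step is the descent-set/cut-set dictionary in the first paragraph, and once that is in place the computation is a textbook beta-function evaluation. An alternative, purely combinatorial finish via telescoping of $\binom{n-1-d_\sigma}{k-d_\sigma}\tfrac{1}{k+1}=\tfrac{1}{n}\binom{n}{k+1}\!/\!\binom{n-1}{d_\sigma}$ would also work but is less transparent than the integral representation.
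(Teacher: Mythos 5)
Your proposal is correct and follows essentially the same route as the paper: the cut-set/descent-set dictionary yields exactly the paper's count $\binom{n-d_\sigma-1}{m-d_\sigma-1}$ of compositions with $m$ parts refining the descent set, leading to the same alternating sum $(-1)^{d_\sigma}\sum_{j}\binom{n-1-d_\sigma}{j}\frac{(-1)^{j}}{j+d_\sigma+1}$. The only difference is cosmetic: you evaluate this sum by the beta-function integral, whereas the paper identifies it with the $n$'th Taylor coefficient of $\log(1+z)(1+z)^{n-d_\sigma-1}$; both are standard and give $\frac{1}{n\binom{n-1}{d_\sigma}}$.
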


\begin{proof}
Let us consider partitions $i_1+\cdots +i_m=n$ with a fixed $m$
such that $\sigma \in S(i_1,\dots,i_m)$.
One sees that there are
$$ {n-d_\sigma -1 \choose m-d_\sigma-1}$$
of those. 
Hence we find
$$
\sum_{\substack{i_1+\cdots + i_m=n\\ \sigma \in S(i_1,\dots,i_m)}}\frac{(-1)^{m+1}}{m} = \sum_{m=d_\sigma+1}^n \frac{(-1)^{m+1}}{m}{n-d_\sigma -1 \choose m-d_\sigma-1} =\sum_{j=0}^{n-d_\sigma-1}\frac{(-1)^{d_\sigma+j}}{d_\sigma +j +1}{n-d_\sigma-1 \choose j}. 
$$
The latter sum can be identified with the $n$'th Taylor coefficient
of $\log{(1+z)}(1+z)^{n-d_\sigma-1}$ at $z=0$
and is given by
$$ (-1)^{d_\sigma}\frac{(n-d_\sigma-1)! d_\sigma!}{n!} = \frac{(-1)^{d_\sigma}}{n {n-1\choose d_\sigma}}.$$
\end{proof}

	\begin{definition}\label{remark: Eulerian idempotent} We denote by $C_{n,d}$ the numbers $C_{n,d}:=\frac{(-1)^{d}}{n\binom{n-1}{d}}$, $n\geq1$, $0\leq d<n$: these satisfy the identities \[C_{n,d}=C_{n-1,d}+C_{n,d+1},\qquad C_{n,d}=(-1)^{n+1}C_{n,n-d-1},\] 
as it follows by straightforward computations. For $n\leq 6$, they are given by
\begin{equation*} \begin{tabular}{cccccc} $1$ &&&&& \\ [1ex] $\frac{1}{2}$&$-\frac{1}{2}$&&&& \\ [1ex] $\frac{1}{3}$&$-\frac{1}{6}$&$\frac{1}{3}$&&& \\ [1ex] $\frac{1}{4}$&$-\frac{1}{12}$&$\frac{1}{12}$&$-\frac{1}{4}$&& \\ [1ex] $\frac{1}{5}$&$-\frac{1}{20}$&$\frac{1}{30}$&$-\frac{1}{20}$&$\frac{1}{5}$& \\  [1ex] $\frac{1}{6}$&$-\frac{1}{30}$&$\frac{1}{60}$&$-\frac{1}{60}$&$\frac{1}{30}$&$-\frac{1}{6}$ \end{tabular}
	\end{equation*}
The element $e_n^{[1]}:=\sum_{\sigma\in S_n}C_{n,d_\sigma}\sigma\in\field[S_n]$ of the group algebra of the symmetric group is called the (first) \emph{Eulerian idempotent}, see \cite{Loday,Reutenauer}.\end{definition}
\begin{remark} There is a natural action of $S_n$ on the functions on the $n$-cube, by permuting the variables, and a projector corresponding to $e_n^{[1]}$: then the integrand of Theorem \ref{theorem: Eulerian} is precisely the image of $a_1(t_1)\cdots a_n(t_n)$ under this projector.
\end{remark}
It is well known that $e_n^{[1]}$ is an idempotent of the group algebra, and in fact a \emph{Lie idempotent}. The latter means the following: let $V$ be a vector space and $\overline{T}(V)$ the reduced tensor algebra on $V$, then the mapping 
	\begin{equation}\label{equation: eulerian}
	 E: \overline{T}(V) \to \overline{T}(V), \quad v_1\otimes \cdots \otimes v_n \mapsto \sum_{\sigma \in S_n}C_{n,d_\sigma} v_{\sigma(1)}\otimes \cdots \otimes v_{\sigma(n)}\end{equation}
	is a projector from $\overline{T}(V)$ onto the subspace $\mathrm{Lie}(V)$ spanned by Lie words, i.e. onto the free Lie algebra on $V$, see \cite{Loday,Reutenauer}. Notice that the restriction of $E$ to $n$'fold tensor products is precisely the projector corresponding to $e^{[1]}_n$ under the natural action of $S_n$ on $T^n(V)$. It is not immediately clear how to express $E(v_1\otimes\cdots\otimes v_n)$ as a linear combination of Lie words: one way to do it is to compose $E$ with a second Lie idempotent, for instance the Dynkin idempotent
$$ \gamma: \overline{T}(V) \to \overline{T}(V), \quad v_1\otimes \cdots \otimes v_n \mapsto \frac{1}{n} [v_1,\cdots, [v_{n-1},v_n]\cdots ].$$
Since both $E$ and $\gamma$ are projectors with image $\operatorname{Lie}(V)$, we see that $E=\gamma \circ E$, that is
\begin{equation}\label{equation:E lie} E( v_1\otimes \cdots \otimes v_n ) = \frac{1}{n^2}\sum_{\sigma \in S_n}\frac{(-1)^{d_\sigma}}{{n-1\choose d_\sigma}} [v_{\sigma(1)}, \cdots [v_{\sigma(n-1)}, v_{\sigma(n)}]\cdots].\end{equation} 
We shall make use of the above identity in Section \ref{section: pushforward} below. 	
\\

A choice of basis of $V$ induces a scalar product $(-,-)$ on $\overline{T}(V)$, by imposing that the induced basis of $\overline{T}(V)$ is orthonormal. We consider the adjoint $E^*$ of \eqref{equation: eulerian} with respect to this scalar product: this is independent on the choice of basis, and may be computed explicitly as in the proof of \cite[Theorem 6.3]{Reutenauer} (where $\circledast$ is the shuffle product, cf. the Appendix)
\begin{multline*}\label{equation: E^*} E^*:\overline{T}(V)\to\overline{T}(V), \quad E^*(v_1\otimes\cdots\otimes v_n)= \\= v_1\otimes\cdots\otimes v_n + \sum_{k=2}^{n}\frac{(-1)^{k+1}}{k}\sum_{i_1+\cdots+i_k=n}(v_1\otimes\cdots\otimes v_{i_1})\circledast\cdots\circledast(v_{i_1+\cdots+i_{k-1}+1}\otimes\cdots\otimes v_n).\end{multline*}
By definition of the shuffle product, a straightforward application of Lemma \ref{lemma: combinatorial} yields the more explicit formula
 \[ E^*(v_1\otimes\cdots\otimes v_n) = \sum_{\sigma\in S_n}C_{n,d_\sigma}v_{\sigma^{-1}(1)}\otimes\cdots\otimes v_{\sigma^{-1}(n)}. \]
\begin{proposition}\label{theorem: C_oo} The corestriction $p\varphi:\overline{T}(\bs \Omega^*([0,1]))\to \bs C^*([0,1])$ of the $A_\infty$ morphism $\varphi:\Omega^*([0,1])\to C^*_\infty([0,1])$ equals 
	\[ p\varphi = (p\lambda) \circ E^*, \]
where $p\lambda:\overline{T}(\bs \Omega^*([0,1]))\to\bs C^*([0,1])$ is the corestriction of Gugenheim's $A_\infty$ morphism $\lambda$ and $E^*:\overline{T}(\bs \Omega^*([0,1]))\to\overline{T}(\bs \Omega^*([0,1]))$ is defined as above.
\end{proposition}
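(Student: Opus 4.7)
The plan is to verify the identity $p\varphi = (p\lambda)\circ E^*$ on homogeneous tensors $v_1\otimes\cdots\otimes v_n\in\overline{T}(\bs\Omega^*([0,1]))$ by a short case analysis. For $n=1$, the formula for $E^*$ reduces to the identity and $p\lambda=\lambda_1=\pi$, which agrees with $p\varphi=\lambda_1\circ\log_1=\pi$, so the linear parts match immediately. It then suffices to treat the two cases $n\geq 2$: (a) at least one input is a zero-form, and (b) all inputs are one-forms.

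In case (a), I want to show both sides vanish. For the left-hand side, I expand
\[p\varphi(v_1\otimes\cdots\otimes v_n)=\sum_{k\geq 1}\sum_{i_1+\cdots+i_k=n}\log_k\bigl(\lambda_{i_1}(v_1,\ldots,v_{i_1})\otimes\cdots\otimes\lambda_{i_k}(v_{n-i_k+1},\ldots,v_n)\bigr).\]
If a zero-form $v_j$ sits in a block $i_l\geq 2$, then $\lambda_{i_l}$ already vanishes by the description of Gugenheim's morphism. If $v_j$ sits in a singleton block, then $\lambda_1(v_j)=\pi(v_j)$ has degree $0$ in $C^*([0,1])$ and so for $k\geq 2$ the term $\log_k(\ldots)$ vanishes; the only surviving possibility is $k=1$, but then $\log_1\circ\lambda_n=\lambda_n$ vanishes on any length-$n$ tensor containing a zero-form. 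For the right-hand side, $E^*$ outputs a linear combination of permutations of $v_1\otimes\cdots\otimes v_n$, each of which still contains a zero-form factor, so $\lambda_n$ (and hence $p\lambda$ restricted to length-$n$ tensors) annihilates it term by term.

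Case (b) is the content of the statement. Given pure one-form inputs $v_i=\bs a_i(t)dt$, write $c_i:=\int_{\Delta_i}a_{i_1}(t_1)\cdots a_{i}(t_i)\,dt_1\cdots dt_i$ for the scalar appearing in $\lambda_i(v_1,\ldots,v_i)=c_i\,\bs dt$. Direct expansion of $p(\log\circ\lambda)$ using $\log_k(\bs dt\otimes\cdots\otimes\bs dt)=\tfrac{(-1)^{k+1}}{k}\bs dt$ yields
\[p\varphi(v_1\otimes\cdots\otimes v_n)=\sum_{k=1}^{n}\frac{(-1)^{k+1}}{k}\sum_{i_1+\cdots+i_k=n}c_{i_1}\cdots c_{i_k}\;\bs dt.\]
For the right-hand side, I apply $p\lambda$ to the explicit formula for $E^*$ recalled just before the proposition. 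The key input is Proposition~\ref{proposition: guggheneim shuffle}, which asserts that on $\overline{T}(\bs\Omega^1([0,1]))$ the corestriction $p\lambda$ is an algebra morphism from the shuffle algebra to $\field=\bs C^1([0,1])$. Consequently
\[p\lambda\bigl((v_1\otimes\cdots\otimes v_{i_1})\circledast\cdots\circledast(v_{i_1+\cdots+i_{k-1}+1}\otimes\cdots\otimes v_n)\bigr)=c_{i_1}\cdots c_{i_k}\,\bs dt,\]
so summing over all ordered partitions with the coefficient $\tfrac{(-1)^{k+1}}{k}$ reproduces exactly the same sum, finishing case (b).

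I do not foresee a genuine obstacle here: once the shuffle compatibility of $p\lambda$ is used to convert the shuffle-formula for $E^*$ into a sum indexed by ordered partitions, it matches term-for-term the combinatorics of composing the $A_\infty$ morphisms $\log$ and $\lambda$. The only subtle point is making sure the vanishing argument in case (a) accounts for every way a zero-form can be distributed across the blocks of a composition, which is why I separate the $k=1$ summand from the $k\geq 2$ summands before invoking the vanishing properties of $\lambda_n$ and $\log_k$.
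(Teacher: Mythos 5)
Your proof is correct, but it takes a genuinely different route from the paper's. The paper proves this proposition by comparing two closed formulas indexed by descent numbers: the expression for $\varphi_n$ from Theorem \ref{theorem: Eulerian} (itself obtained from Proposition \ref{proposition: guggheneim shuffle} plus Lemma \ref{lemma: combinatorial}) and the permutation formula $E^*(v_1\otimes\cdots\otimes v_n)=\sum_{\sigma}C_{n,d_\sigma}v_{\sigma^{-1}(1)}\otimes\cdots\otimes v_{\sigma^{-1}(n)}$, which is again derived from the shuffle formula via Lemma \ref{lemma: combinatorial}; the identity then follows by re-indexing $\sigma\mapsto\sigma^{-1}$ under the integral. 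You instead stay upstream of the descent combinatorics: you expand the composition $\log\circ\lambda$ over ordered partitions, use the shuffle-product formula for $E^*$ as stated before the proposition, and convert shuffles of blocks into products of iterated integrals via Proposition \ref{proposition: guggheneim shuffle}, so both sides become the same partition-indexed sum with coefficients $\tfrac{(-1)^{k+1}}{k}$ and Lemma \ref{lemma: combinatorial} is never needed. This is a clean trade: your argument proves $p\varphi=(p\lambda)\circ E^*$ directly from the definitions (and Theorem \ref{theorem: Eulerian} could then be recovered by a single application of Lemma \ref{lemma: combinatorial}), whereas the paper's order of exposition produces the explicit descent formula first and gets the proposition as a two-line corollary. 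Your handling of the zero-form case is also more detailed than the paper's one-line dismissal and is correct, including the separation of the $k=1$ term. One small notational point: your $c_i$ should be indexed by the block, not just its length, since $c_{i_1}\cdots c_{i_k}$ must denote the iterated integrals over the successive blocks $a_1,\dots,a_{i_1}$, then $a_{i_1+1},\dots,a_{i_1+i_2}$, and so on; as written, $c_i$ literally refers to the first $i$ inputs. The intent is clear, but the final write-up should fix the bookkeeping.
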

\begin{proof} This is a straightforward consequence of the previous formula for $E^*$: if one of the arguments is a zero-form, both the left- and the right-hand side of the claimed identity vanish, otherwise, we see that
	\[ (\lambda_n\circ E^*)(\bs a_1(t)dt\otimes\cdots\otimes \bs a_n(t)dt)=\sum_{\sigma\in S_n}C_{n,d_{\sigma}}\int_{\Delta_n}a_{\sigma^{-1}(1)}(t_1)\cdots a_{\sigma^{-1}(n)}(t_n)dt_1\cdots dt_n =  \]
 	\[ =\sum_{\sigma\in S_n}C_{n,d_{\sigma}}\int_{\Delta_n}a_{1}(t_{\sigma(1)})\cdots a_{n}(t_{\sigma(n)})dt_1\cdots dt_n = \varphi_n(\bs a_1(t)dt\otimes\cdots\otimes \bs a_n(t)dt). \]
\end{proof}

\begin{corollary}\label{corollary: C-infty} The map $\varphi:\Omega^*([0,1])\to C^*_\infty([0,1])$ is a $C_\infty$ morphism.
\end{corollary}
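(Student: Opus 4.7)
The plan is to use Proposition \ref{theorem: C_oo} to reduce the $C_\infty$ condition on $\varphi$ to a statement about the adjoint Eulerian idempotent $E^*$. Recall that by definition (see Appendix \ref{appendix: C-infty}), an $A_\infty$ morphism between $C_\infty$ algebras is a $C_\infty$ morphism precisely when its corestriction $p\varphi: \overline{T}(\bs\Omega^*([0,1])) \to \bs C^*([0,1])$ annihilates the image of the shuffle product $\circledast$. Thus, the goal reduces to checking that $p\varphi(x \circledast y) = 0$ for any nontrivial shuffle.

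By Proposition \ref{theorem: C_oo} we have $p\varphi = (p\lambda) \circ E^*$, so it suffices to establish the \emph{symmetry property}
\[
E^*(x \circledast y) = 0 \qquad \text{for all } x,y \in \overline{T}(\bs\Omega^*([0,1])).
\]
This is the step I would prove directly, and it is the key point of the argument (though not deep). The proof goes via duality: fix the natural scalar product on $\overline{T}(V)$ and use that for all $z \in \overline{T}(V)$,
\[
(E^*(x \circledast y), z) = (x \circledast y, E(z)).
\]
Since $E$ is the first Eulerian projector with image the free Lie algebra $\operatorname{Lie}(V) \subset \overline{T}(V)$, the element $E(z)$ is a Lie polynomial. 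Ree's classical theorem asserts that $\operatorname{Lie}(V)$ is the orthogonal complement (with respect to the chosen scalar product) of the image of the shuffle product, i.e., every nontrivial shuffle pairs trivially with every Lie element. Hence $(x \circledast y, E(z)) = 0$ for all $z$, which forces $E^*(x \circledast y) = 0$.

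Combining the symmetry of $E^*$ with Proposition \ref{theorem: C_oo} we get
\[
p\varphi(x \circledast y) = p\lambda\bigl(E^*(x \circledast y)\bigr) = 0,
\]
which is precisely the condition for $\varphi$ to be a $C_\infty$ morphism. The main obstacle is really just invoking Ree's theorem at the right moment; everything else is formal. I would state the symmetry property as a short lemma (perhaps with a pointer to \cite{Reutenauer} for Ree's theorem) and then deduce the corollary in two lines.
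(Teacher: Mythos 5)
Your proof is correct and follows essentially the same route as the paper: both reduce the $C_\infty$ condition to Proposition \ref{theorem: C_oo} together with the vanishing of $E^*$ on the image of the shuffle product. The only difference is that you spell out that vanishing via adjointness and Ree's theorem, whereas the paper simply refers to the proof of \cite[Theorem 6.3]{Reutenauer} for the same fact.
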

\begin{proof} Recall, cf. Appendix \ref{appendix: C-infty}, that since $\varphi$ is a morphism of $A_\infty$ algebras, we only have to check that the Taylor coefficients $\varphi_n$, $n\geq2$, vanish on the image of the shuffle product
$$ \circledast: \overline{T}(\bs \Omega^*([0,1])\otimes \overline{T}(\bs \Omega^*([0,1]))\to \overline{T}(\bs \Omega^*([0,1]).$$
This follows from the previous proposition, since $E^*$ vanishes on the image of the shuffle product, compare with the proof of \cite[Theorem 6.3]{Reutenauer}.
\end{proof}
We shall give another proof of the previous corollary below, in Subsection \ref{subsection: uniqueness}.

\subsection{A recursive formula}\label{subsection: recursive}

In this subsection we derive an alternative presentation of the $C_\infty$ morphism
$$\varphi: \Omega^*([0,1]) \to C_\infty^*([0,1]),$$
closely related to Magnus expansion (see \cite{Magnus, Iserles-Norsett}), cf. Section \ref{section: pushforward} below.

\begin{definition}\label{definition: IN formula}
For all $n\geq1$ we define maps 
$$\sM_{n}: (\Omega^0([0,1]))^{\otimes n}\to \Omega^0([0,1])$$
as follows:
\begin{itemize}
\item   for $n=1$, we set $\sM_{1}(a_1(t))(s)=\int_0^s a_1(t_1)dt_1$,
\item for $n\geq2$, we apply the recursive formula (where the suspension points inside parentheses are to be filled by the arguments in the order $a_1,\ldots,a_n$, and we denote by $p_j$ the partial sum $p_{j}=\sum_{h\leq j}i_h$)
	\begin{multline*} \sM_{n}(a_1(t)\otimes\cdots\otimes a_n(t))(s)=\sum_{k=1}^{n-1}\frac{B_k}{k!}\sum_{j=0}^k(-1)^{j}{k \choose j} \\ \hspace{0.7cm} \sum_{i_1+\cdots+i_k=n-1}\int_0^s\sM_{i_1}(\cdots)(t_n)\cdots\sM_{i_j}(\cdots)(t_n)a_{p_j+1}(t_n)\sM_{i_{j+1}}(\cdots)(t_n)\cdots\sM_{i_k}(\cdots)(t_n)dt_n.
	\end{multline*}
\end{itemize}
\end{definition}


\begin{definition}\label{definition: sigma_s}
We denote by $(\beta_s)_{s\in\field}$ ($s\in[0,1]$ in the smooth case) the one-parameter family
of maps given by $\beta_s(t)= s\cdot t$.
We refer to the corresponding endomorphisms $\beta_s^*$ of the dg algebra
$\Omega^*([0,1])$ as the {\em scaling morphisms}
and define a one-parameter family of $C_\infty$-morphisms from $\Omega^*([0,1])$ to $C^*_\infty([0,1])$ by setting
$\varphi_s := \varphi \circ \beta_s^*$.
\end{definition}

\begin{theorem}\label{theorem: recursion} The $n$'th Taylor coefficient $\varphi_{s,n}$ of the $C_\infty$ morphism $\varphi_s$ is given by
$$ \varphi_{s,n}(\bs a_1(t)dt \otimes \cdots \otimes \bs a_n(t)dt) =  \sM_{n}(a_1(t)\otimes \cdots \otimes a_n(t))(s) \bs dt.$$
	\end{theorem}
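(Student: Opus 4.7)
The plan is to proceed by induction on $n$. The base case $n = 1$ follows by direct evaluation: $\varphi_{s,1}(\bs a_1(t)\,dt) = \pi\bigl(s \cdot a_1(st)\,dt\bigr) = \bigl(\int_0^s a_1(u)\,du\bigr)\bs dt = \sM_1(a_1)(s)\,\bs dt$, after the change of variable $u = s\tau$. For $n \ge 2$, both sides vanish at $s = 0$, so it suffices to match $s$-derivatives; by the recursive definition of $\sM_n$, the derivative $\sM_n'(s)$ is the integrand of the outermost integral in Definition~\ref{definition: IN formula} evaluated at $t_n = s$, an expression in the lower-order terms $\sM_{i_\ell}$, $i_\ell < n$, which by the inductive hypothesis coincide with the corresponding Taylor coefficients of $\varphi_s$.

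To compute the $s$-derivative of the left-hand side, I would interpret $\varphi = \log \circ \lambda$ as a Magnus expansion in a formal setting. Introduce the completed free associative algebra $\widehat{T}(V)$ on formal generators $e_1, e_2, \ldots$, set $A(s) := \sum_i a_i(s)\,e_i$, and collect Gugenheim's time-ordered iterated integrals into the generating series
\[ L(s) \;=\; 1 + \sum_{n \ge 1}\,\sum_{i_1,\ldots,i_n}\,\Bigl(\int\limits_{0 \le t_1 \le \cdots \le t_n \le s} a_{i_1}(t_1)\cdots a_{i_n}(t_n)\,dt_1\cdots dt_n\Bigr)\, e_{i_1}\cdots e_{i_n}, \]
which satisfies the Chen ODE $L'(s) = L(s)\,A(s)$ with $L(0) = 1$. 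Since the Taylor coefficients $\log_n(\bs dt^{\otimes n}) = \tfrac{(-1)^{n+1}}{n}\bs dt$ reproduce the power-series coefficients of $\log(1+x)$, the $A_\infty$-composition $\varphi = \log \circ \lambda$ translates, on pure one-form inputs, into the non-commutative logarithm: the coefficient of $e_1 e_2 \cdots e_n$ in $\Omega(s) := \log L(s)$ equals $\hat\varphi_{s,n}(a_1,\ldots,a_n) := \varphi_{s,n}(\bs a_1 dt \otimes \cdots \otimes \bs a_n dt)/\bs dt$.

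A classical calculation then yields the Magnus ODE
\[ \Omega'(s) \;=\; \sum_{k \ge 0} \tfrac{(-1)^k B_k}{k!}\,\operatorname{ad}_{\Omega(s)}^k\bigl(A(s)\bigr), \]
see~\cite{Magnus, Iserles-Norsett}. Expanding $\operatorname{ad}_\Omega^k(A) = \sum_{j=0}^k (-1)^{k-j}\binom{k}{j}\Omega^j A\,\Omega^{k-j}$, writing $\Omega = \sum_m \Omega_m$ with $\Omega_m$ homogeneous of tensor degree $m$, and extracting the coefficient of $e_1 e_2 \cdots e_n$ forces a decomposition of $\{1,\ldots,n\}$ into consecutive blocks of sizes $i_1,\ldots,i_j,\,1,\,i_{j+1},\ldots,i_k$ with $i_1 + \cdots + i_k = n-1$; the central block of size one contributes $a_{p_j+1}(s)$, while each lateral block of size $i_\ell$ contributes the inductively known $\sM_{i_\ell}(\cdots)(s)$. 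The sign identity $(-1)^{k-j}\binom{k}{j}\cdot (-1)^k = (-1)^j \binom{k}{j}$ then matches these Bernoulli-weighted terms exactly with the coefficients in Definition~\ref{definition: IN formula}, closing the induction.

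The main obstacle is the rigorous translation between the $A_\infty$-composition $\varphi = \log \circ \lambda$ and the formal non-commutative power-series setting underlying the Magnus expansion: one must verify that extracting the coefficient of $e_1 \cdots e_n$ in $\log L(s)$ genuinely computes the Taylor coefficient $\hat\varphi_{s,n}(a_1, \ldots, a_n)$, by expanding the $A_\infty$-composition term-by-term and matching with the formal series definition of the non-commutative logarithm. Once this dictionary is established, the remaining work is standard Magnus-expansion bookkeeping.
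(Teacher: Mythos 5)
Your proposal is correct, but it takes a genuinely different route from the paper. The paper never leaves the homotopy-algebra framework: it differentiates $\varphi_s(X)=\varphi(\beta_s^*X)$, exhibits an explicit primitive $Y_s$ (obtained by replacing one factor $\bs a_{j+1}(t)dt$ by $\bs(t\,a_{j+1}(st))$ and summing over $j$) with $\tfrac{d}{ds}\beta_s^*X=Q(Y_s)$, and then uses the fact that $\varphi$ intertwines the codifferentials, $\varphi\circ Q=M\circ\varphi$, together with the Bernoulli-number Taylor coefficients of the transferred $C_\infty$ structure $M$ (Theorem \ref{theorem: induced structure}) and the vanishing of the higher $\varphi_n$ on zero-form inputs, to read off directly that the functions $\nu_n$ satisfy the recursion of Definition \ref{definition: IN formula}. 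You instead bypass $M$ and the primitive $Y_s$ entirely: you use the explicit composition formula for $\varphi=\log\circ\lambda$ on pure one-form inputs (this is exactly the first display in the paper's proof of Theorem \ref{theorem: Eulerian}, so the ``dictionary'' you flag as the main obstacle is indeed routine — the word coefficient of $e_1\cdots e_n$ in $\log L(s)$ is by definition the same alternating sum over consecutive splittings of products of iterated integrals), identify $\varphi_{s,n}$ with a word coefficient of the logarithm of the Chen series $L(s)$ solving $L'=LA$, and import the classical Magnus ODE $\Omega'=\sum_k\tfrac{(-1)^kB_k}{k!}\operatorname{ad}_{\Omega}^k(A)$ from \cite{Magnus,Iserles-Norsett}; your sign and block-decomposition bookkeeping matching Definition \ref{definition: IN formula} is correct, and both families vanish at $s=0$ for $n\ge2$, so the induction closes. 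What each approach buys: the paper's argument is self-contained and explains conceptually why the Bernoulli numbers of Definition \ref{definition: IN formula} are those of the transferred $C_\infty$ structure, and it then obtains the Magnus/Hausdorff statements as corollaries in Section \ref{section: pushforward} (Proposition \ref{proposition: ODE1} and Remark \ref{remark: pushforward associative}); your argument inverts this logic, making the theorem a consequence of the classical Magnus expansion, which is shorter if one is willing to cite that result (or to reprove it — the computation needed is essentially the one carried out in Remark \ref{remark: pushforward associative}(2)), but it loses the independence from the Magnus literature that the paper's presentation is designed to provide.
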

	
	\begin{proof}
We proceed by showing that the family of maps
$$ \nu_{n}: (\Omega^0([0,1])^{\otimes n} \to \Omega^0([0,1])$$
defined by
$\nu_{n}(a_1(t)\otimes \cdots \otimes a_n(t))(s)\bs dt = \varphi_{s,n}(\bs a_1(t)dt\otimes \cdots \otimes \bs a_n(t)dt)$
obeys the same recursion as the family of maps $(\sM_{n})_{n\ge 1}$ from above.	

	Recall that by definition we have $\varphi_{s}=\varphi \circ \beta_s^*$.
	Let $X$ be an arbitrary element of $\overline{T}(\bs \Omega^1([0,1]))$
	and consider the curve
	$$ s \mapsto  \varphi_s(X) \in \overline{T}(\bs C^1([0,1])) \cong \overline{T}(\field).$$
	If we differentiate it with respect to $s$, we find
	$$ \frac{d}{ds} \varphi_s(X)=\varphi\left( \frac{d}{ds}\beta_s^*(X)	\right).$$
	Now suppose we find a one-parameter family of elements $Y_s \in \overline{T}(\bs \Omega^*([0,1]))$ such that $\frac{d}{ds}\beta_s^*X = Q(Y_s)$,
	where $Q$ denotes the codifferential which encodes the dg algebra structure on $\Omega^*([0,1])$.
	We would then conclude that
	$$\frac{d}{ds}(\varphi_s(X))=\varphi(\frac{d}{ds}\beta_s^*(X))=\varphi(Q(Y_s))=M(\varphi(Y_s))$$
	holds, where $M$ is the codifferential on $\overline{T}(\bs C^*([0,1]))$ which encodes the $C_\infty$ algebra structure on $C^*([0,1])$
	from Theorem \ref{theorem: induced structure}.
	
	We now consider $X= \bs a_1(t)dt \otimes \cdots \otimes \bs a_n(t)dt$
	and claim that an appropriate $Y_s$ is given by
	$$ Y_s = -\sum_{j=0}^{n-1} \bs \beta_s^*(a_1(t)dt) \otimes \cdots \otimes \bs \beta_s^*(a_{j}(t)dt) \otimes \bs (ta_{j+1}(st))
	\otimes \bs \beta_s^*(a_{j+2}(t)dt) \otimes \cdots \otimes \bs \beta_s^*(a_n(t)dt).$$
	It is straightforward to show that applying the linear part $Q_1$ of the codifferential $Q$, which encodes the de Rham differential,
	yields $Q_1(Y_s)= X$.
	One checks that the contribution from the quadratic part $Q_{2}$ of the codifferential, which encodes the wedge product, vanishes. This is due to the equality $$\beta_s^*(a_j(t)dt) (ta_{j+1}(st)) = st a_j(st)a_{j+1}(st) dt = (ta_j(st)) \beta_s^*(a_{j+1}(t)dt).$$

From this we infere that
\begin{eqnarray*}
&& \frac{d}{ds}\left(\varphi_s(\bs a_1(t)dt \otimes \cdots \otimes \bs a_n(t)dt \right) =\\
&& \hspace{-0.9cm} -M\varphi \left( \sum_{j=0}^{n-1}
\bs \beta_s^*(a_1(t)dt) \otimes \cdots \otimes \bs \beta_s^*(a_{j}(t)dt) \otimes  \bs (ta_{j+1}(st))
	\otimes \bs \beta_s^*(a_{j+2}(t)dt) \otimes \cdots \otimes \bs \beta_s^*(a_n(t)dt)\right).
\end{eqnarray*}	
Recall from Subsection \ref{subsection: Whitney forms} that the Taylor coefficients of $M$ are only non-zero on tensor products
which contain exactly one factor in $\bs C^0([0,1])$,
while the Taylor coefficients $(\varphi_{n})$
all vanish for $n\ge 2$ whenever one of the factors is a function. Furthermore, we notice that $\varphi_{1}=\pi$ evaluates
on $ta_{j+1}(st)$ to $a_{j+1}(s) t$.
We thus obtain that the projection of
\begin{eqnarray*}
-M\varphi\left( \sum_{j=0}^{n-1}
\bs \beta_s^*(a_1(t)dt) \otimes \cdots \otimes \bs \beta_s^*(a_{j}(t)dt) \otimes \bs (ta_{j+1}(st))
	\otimes \bs \beta_s^*(a_{j+2}(t)dt) \otimes \cdots \otimes \bs \beta_s^*(a_n(t)dt)\right)
	\end{eqnarray*}
to $\bs C^*([0,1])$ equals (by definition of the functions $(\nu_n)_{n\ge1}$)
\begin{eqnarray*}
&&\hspace{-0.75cm}-\sum_{\ell\ge 1}\sum_{p=0}^\ell (-1)^{\ell+1}{\ell \choose p}\frac{B_\ell}{\ell!}
\sum_{j=0}^{n-1}\sum_{n_1+\cdots + n_\ell=n-1}\nu_{n_1}(\cdots)(s)\cdots\nu_{n_p}(\cdots)(s)a_{j+1}(s)
\nu_{n_{p+1}}(\cdots)(s)\cdots \nu_{n_\ell}(\cdots)(s)
\end{eqnarray*}	
times $\bs dt$.

On the other hand,
the projection of
\begin{eqnarray*}
&&\frac{d}{ds}\left(\varphi_{s}(\bs a_1(t)dt \otimes \cdots \otimes \bs a_n(t)dt \right)
\end{eqnarray*}
to $\bs C^*([0,1])$
equals
\begin{eqnarray*}
&&\frac{d}{ds}\left(\varphi_{s,n}(\bs a_1(t)dt \otimes \cdots \otimes \bs a_n(t)dt)\right) = \frac{d}{ds}(\nu_{n}(a_1(t) \otimes \cdots \otimes  a_n(t)) (s)\bs dt)
\end{eqnarray*}
and hence we finally arrive at the recursion
\begin{eqnarray*}
&&\hspace{-0.8cm}\frac{d}{ds}\left(\nu_{n}(a_1(t)\otimes \cdots \otimes a_n(t))(s)\right) =\\
&&\hspace{-0.8cm}= \sum_{\ell\ge 1}\sum_{p=0}^\ell (-1)^{\ell}{\ell \choose p}\frac{B_\ell}{\ell!}
\sum_{j=0}^{n-1}\sum_{n_1+\cdots + n_\ell=n-1}\nu_{n_1}(\cdots)(s)\cdots \nu_{n_p}(\cdots)(s)a_{j+1}(s)
\nu_{n_{p+1}}(\cdots)(s)\cdots \nu_{n_\ell}(\cdots)(s).
\end{eqnarray*}
This is precisely the recursion which the family of maps
$(\sM_{n})_{n\ge 1}$ obeys.

	\end{proof}

\subsection{Some uniqueness results}\label{subsection: uniqueness}
The aim of this section is to show that the $C_\infty$ morphism $\varphi:\Omega^*([0,1])\to C^*([0,1])$ coincides with the $A_\infty$ morphism induced via homotopy transfer along Dupont's contracion, cf. Section \ref{subsection: Whitney forms}. We do so by showing a uniqueness result for $A_\infty$ morphisms satisfying some properties in the following lemma. 
We shall rely heavily on the notations and results from the Appendix.

\begin{lemma}\label{lemma: uniqueness 1} Let $(V,Q_1,\ldots,Q_n,\ldots)$ be an $A_\infty$ algebra, together with a decomposition of $V$ in the direct sum of graded subspaces $V=X\oplus Y$ such that $\bs Y\subset Q_1(\bs X)$.
Let $(W,R_1,\ldots,R_n,\ldots)$ be a second $A_\infty$ algebra and $G, G': V\to W$ two $A_\infty$ morphisms such that
\begin{enumerate}
\item the linear parts of $G$ and $G'$ are equal and
\item whenever there exists $1\leq i \leq n$ such that $v_i\in X$, then $$G_n(\bs v_1\otimes\cdots\otimes \bs v_n)=G'_n(\bs v_1\otimes\cdots\otimes \bs v_n).$$
\end{enumerate}
Under these conditions, the morphisms $G$ and $G'$ coincide.

In particular, if there exists an $A_\infty$ morphism $F:V\to W$ with a given linear part and the property that its higher Taylor coefficients $F_n$, $n\geq2$, vanish whenever at least one of their arguments is in $X$, it is unique.
\end{lemma}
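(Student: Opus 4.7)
The plan is to proceed by induction on $n\geq 1$, showing that $G_n$ and $G'_n$ agree on every tensor $\bs v_1\otimes\cdots\otimes\bs v_n$. The base case $n=1$ is exactly hypothesis (1). For the inductive step, by multilinearity and the direct sum decomposition $V=X\oplus Y$, I may restrict to pure tensors in which each $v_i$ lies in $X$ or in $Y$. If any $v_i\in X$, the conclusion is hypothesis (2), so the only case I need to handle is the one in which all $v_i\in Y$; here, the assumption $\bs Y\subset Q_1(\bs X)$ allows me to pick $x_n\in X$ with $\bs v_n=Q_1(\bs x_n)$.

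The core idea is then to evaluate the $A_\infty$-morphism compatibility equation for both $G$ and $G'$ on the substituted tensor $\bs v_1\otimes\cdots\otimes\bs v_{n-1}\otimes\bs x_n$ and to compare the two resulting identities term by term. On the composition side, each summand $\pm R_k(G_{i_1}\otimes\cdots\otimes G_{i_k})$ has either $k\geq 2$, in which case $i_j<n$ for every $j$ and induction gives $G_{i_j}=G'_{i_j}$ on each block, or $k=1$ and $i_1=n$, in which case the single block is the full tensor and contains $\bs x_n\in\bs X$, so hypothesis (2) applies; moreover, the block occupying the rightmost position always contains $\bs x_n$ and is controlled either by hypothesis (1) (if of length $1$) or by hypothesis (2) (if of length $\geq 2$).

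The analogous analysis on the differential side shows that each summand $\pm G_{p+r+1}(\cdots\otimes Q_q(\cdots)\otimes\cdots)$ is controlled either by induction (whenever $p+r+1<n$, i.e.\ $q\geq 2$) or by hypothesis (2) (whenever $q=1$ and $p<n-1$, for then the length-$n$ tensor which is the argument of $G_n$ still retains $\bs x_n\in\bs X$ at the rightmost position). The unique summand escaping both reductions is $(p,q,r)=(n-1,1,0)$, which produces
\[ \pm G_n\big(\bs v_1\otimes\cdots\otimes\bs v_{n-1}\otimes Q_1(\bs x_n)\big)=\pm G_n(\bs v_1\otimes\cdots\otimes\bs v_n), \]
and the analogous expression with $G'$. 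Subtracting the two compatibility identities, every other term cancels and $\pm(G_n-G'_n)(\bs v_1\otimes\cdots\otimes\bs v_n)$ is forced to vanish, closing the induction.

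The main subtlety will be precisely this case analysis: one must verify that the substitution $\bs x_n\rightsquigarrow\bs v_n=Q_1(\bs x_n)$ produces a unique uncontrolled summand, with every other summand in the $A_\infty$-compatibility equation involving either some $G_m$ with $m<n$ or a tensor still retaining at least one $X$-argument. The ``in particular'' claim then follows at once: if $F,F':V\to W$ both have the prescribed linear part and both satisfy the stated vanishing on tensors containing an $X$-argument, then condition (2) is trivially $0=0$, so the first half of the lemma forces $F=F'$.
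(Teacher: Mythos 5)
Your proposal is correct and follows essentially the same route as the paper's proof: induction on $n$, replacing one $Y$-argument by a $Q_1$-preimage in $X$, and using the $A_\infty$-morphism relation together with hypothesis (2) and the inductive hypothesis to isolate the single uncontrolled term $G_n(\bs v_1\otimes\cdots\otimes\bs v_n)$. The only cosmetic differences are that you substitute in the last slot rather than the first and cancel the remaining $q=1$ terms via hypothesis (2), whereas the paper kills them outright using $Q_1(\bs Y)\subset Q_1Q_1(\bs X)=0$.
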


\begin{proof} We have to prove that in the given hypotheses for all $n\geq1$ and $y_1,\ldots,y_n\in Y$ we have $G_{n}(\bs y_1\otimes\cdots\otimes \bs y_n)=G'_n(\bs y_1\otimes\cdots\otimes \bs y_n)$. We use induction, knowing by hypothesis that $G_1=G'_1$. We denote by $Q_n^i$ the composition $\bs V^{\otimes n}\hookrightarrow\overline{T}(\bs V)\xrightarrow{Q}\overline{T}(\bs V)\to \bs V^{\otimes i}$ and similarly for $G^i_n,G'^i_n:\bs V^{\otimes n}\to \bs W^{\otimes i}$: notice that for $i\geq2$ $G^i_n=G'^i_n$ by the inductive hypothesis, since they only depend on $G_1=G'_1,\ldots, G_{n-1}=G'_{n-1}$. Since $G$ is an $A_\infty$ morphism we have the identity
	\[ \sum_{i=1}^n R_i G^i_n =\sum_{j=1}^n G_jQ^j_n, \] 
and similarly for $G'$. Finally, we choose $x_1\in X$ such that $\bs y_1=Q_1(\bs x_1)$, then by the hypotheses of the lemma $G_n(\bs x_1\otimes \bs y_2\otimes\cdots\otimes \bs y_n)=G'_n(\bs x_1\otimes \bs y_2\otimes\cdots\otimes \bs y_n)$, and together with the inductive hypothesis this shows that (notice that $Q_1(\bs Y)=0$) 

\begin{eqnarray*} G_n(\bs y_1\otimes\cdots\otimes \bs y_n)&=&G_nQ_1(\bs x_1\otimes \bs y_2\otimes\cdots\otimes \bs y_n) \\&=& \left( \sum_{i=1}^n R_i G^i_n -\sum_{j=1}^{n-1} G_jQ^j_n\right)(\bs x_1\otimes \bs y_2\otimes\cdots\otimes \bs y_n)  \\&=& \left( \sum_{i=1}^n R_i G'^i_n -\sum_{j=1}^{n-1} G'_jQ^j_n \right)(\bs x_1\otimes \bs y_2\otimes\cdots\otimes \bs y_n)
\\&=&G'_nQ_1(\bs x_1\otimes \bs y_2\otimes\cdots\otimes \bs y_n)=G'_n(\bs y_1\otimes\cdots\otimes \bs y_n).
\end{eqnarray*}
\end{proof}

\begin{corollary}\label{corollary: essential uniqueness}
The $A_\infty$ morphisms
$$ \lambda: \Omega^*([0,1]) \to C^*_{\cup}([0,1]), \quad \exp: C_\infty^*([0,1]) \to C_ \cup^*([0,1]) \quad \textrm{and} \quad 
\varphi: \Omega^*([0,1]) \to C_\infty^*([0,1])$$
are the only $A_\infty$ morphisms with linear parts equal to $\pi$, $\mathrm{id}$ and $\pi$ respectively, and the property that their higher order Taylor coefficients vanish whenever
one of the arguments is a zero-form.
\end{corollary}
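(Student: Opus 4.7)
The plan is to apply the ``in particular'' clause of Lemma \ref{lemma: uniqueness 1} three times, once for each of the three $A_\infty$ morphisms under consideration. In each case I would take $X$ to be the space of zero-forms in the source and $Y$ the space of one-forms, so that the source decomposes as a direct sum of graded subspaces $V = X \oplus Y$. The vanishing of the higher Taylor coefficients on tensor products containing a zero-form then matches, verbatim, the hypothesis of the ``in particular'' clause.

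The only point to verify in order to invoke the lemma is the inclusion $\bs Y \subseteq Q_1(\bs X)$. For sources $\Omega^*([0,1])$, in either the smooth or the polynomial variant, this is immediate: any one-form $a(t)\,dt$ equals $dF$ where $F(t) := \int_0^t a(\tau)\,d\tau$ (in the polynomial case the primitive can be computed formally term by term), and the claim follows because the suspension intertwines $d$ with $\pm Q_1$. For the source $C^*_\infty([0,1])$, the underlying differential is the restriction of $d$ to Whitney forms; since $dt = d(t)$ with $t \in C^0([0,1])$, the inclusion $\bs C^1([0,1]) \subseteq Q_1(\bs C^0([0,1]))$ again holds.

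Once the hypothesis is verified, the lemma gives uniqueness directly. To conclude, I would collect from Subsections \ref{subsection: lambda} and \ref{subsection: comparison} that $\lambda$, $\exp$ and $\varphi$ indeed satisfy the prescribed properties: their linear parts are $\pi$, $\mathrm{id}$ and $\pi$ respectively (the last because $\varphi_1 = \log_1 \circ \lambda_1 = \mathrm{id} \circ \pi = \pi$), and by construction their higher Taylor coefficients vanish as soon as one argument is a zero-form.

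I do not expect any serious obstacle in this argument: the entire content of the proof is the verification of the condition $\bs Y \subseteq Q_1(\bs X)$, which reduces to the elementary fact that every one-form on $[0,1]$ admits a primitive, together with the matching elementary fact for Whitney forms. The remainder of the argument is a direct unpacking of Lemma \ref{lemma: uniqueness 1}.
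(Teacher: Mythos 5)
Your proposal is correct and follows exactly the paper's argument: the paper likewise applies the final (``in particular'') clause of Lemma \ref{lemma: uniqueness 1} with $X$ the zero-forms and $Y$ the one-forms, for $V=\Omega^*([0,1])$ and $V=C^*([0,1])$ respectively. You merely spell out the verification of $\bs Y\subseteq Q_1(\bs X)$ (every one-form on $[0,1]$, resp.\ $dt$, admits a primitive) and of the linear parts, which the paper leaves implicit.
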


\begin{proof}
Apply the last assertion of the preceding lemma to $V=\Omega^*([0,1])$, $X=\Omega^0([0,1])$ and $Y=\Omega^1([0,1])$ (respectively, $V=C^*([0,1])$, $X=C^0([0,1])$ and $Y=C^1([0,1])$).
\end{proof}

\begin{lemma}\label{lemma: uniqueness 2} Under the same hypotheses as in the previous lemma, suppose moreover that $V$ and $W$ are $C_\infty$ algebras. If there exists $F:V\to W$ as in the final claim, then $F$ is a $C_\infty$ morphism.
\end{lemma}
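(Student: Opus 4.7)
The plan is to apply the uniqueness assertion of Lemma \ref{lemma: uniqueness 1}. I will construct an $A_\infty$ morphism $\widetilde{F}\colon V \to W$ whose linear part equals $F_1$, whose higher Taylor coefficients vanish whenever one argument lies in $X$, and which is a $C_\infty$ morphism by construction. Lemma \ref{lemma: uniqueness 1} will then force $\widetilde{F} = F$, so that $F$ inherits the $C_\infty$ property from $\widetilde{F}$.

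Concretely, I will define $\widetilde{F}$ by prescribing its Taylor coefficients as $\widetilde{F}_n := F_n \circ E^*_n$, where $E^*_n$ denotes the restriction to $T^n(\bs V)$ of the adjoint Eulerian idempotent $E^*$ introduced before Proposition \ref{theorem: C_oo}, and take $\widetilde{F}\colon \overline{T}(\bs V) \to \overline{T}(\bs W)$ to be the unique coalgebra map with those Taylor coefficients. Three elementary observations will secure the required properties. First, $E^*_1 = \mathrm{id}$ will imply $\widetilde{F}_1 = F_1$. Second, since $E^*_n$ acts as a $\field$-linear combination of permutations of the $n$ tensor factors, whenever some input $\bs v_i$ lies in $\bs X$, every summand of $E^*_n(\bs v_1 \otimes \cdots \otimes \bs v_n)$ will still contain an $X$-factor, and the hypothesis on $F_n$ applied summand-by-summand will give $\widetilde{F}_n(\bs v_1 \otimes \cdots \otimes \bs v_n) = 0$. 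Third, $E^*$ annihilates the image of the shuffle product (this is the key property used in the proof of Corollary \ref{corollary: C-infty}), and hence so does $\widetilde{F}_n$, which is precisely the $C_\infty$ condition on the $A_\infty$ morphism $\widetilde{F}$.

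The substantial step will be to verify that $\widetilde{F}$ is indeed an $A_\infty$ morphism, i.e.\ that $\widetilde{F} \circ Q = R \circ \widetilde{F}$. This is where the $C_\infty$ hypotheses on $V$ and $W$ enter: being $C_\infty$ is equivalent to the codifferentials $Q$ and $R$ preserving the respective subspaces of shuffles, that is, the kernels of $E^*$. I will exploit this compatibility, together with the $A_\infty$ relations $F \circ Q = R \circ F$ for the original $F$, to derive the corresponding relations for $\widetilde{F}$: roughly speaking, one averages the $A_\infty$ relations for $F$ over the permutations constituting $E^*$, and the $C_\infty$ property of $V$ and $W$ collapses the resulting terms into exactly the $A_\infty$ relations for $\widetilde{F}$. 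A direct check at arity two is instructive: the $A_\infty$ relations for $F$ applied to $(v_1, v_2)$ and to $(v_2, v_1)$ can be combined using the graded commutativity of $Q_2$ and $R_2$ to yield the arity-two relation for $\widetilde{F}$.

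The main obstacle will be the combinatorial verification of the $A_\infty$ relations for $\widetilde{F}$ in arbitrary arity, reducing ultimately to Hopf-algebraic identities for the Eulerian projector $E^*$ inside the shuffle Hopf algebra $(\overline{T}(\bs V), \circledast)$. Once this verification is in place, Lemma \ref{lemma: uniqueness 1} will identify $\widetilde{F}$ with $F$, and the $C_\infty$ property of $\widetilde{F}$ will pass to $F$, completing the proof.
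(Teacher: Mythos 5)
The overall architecture of your plan is fine: if your $\widetilde{F}$, defined as the coalgebra map with Taylor coefficients $\widetilde{F}_n=F_n\circ E^*_n$, really were an $A_\infty$ morphism, then Lemma \ref{lemma: uniqueness 1} would indeed force $\widetilde{F}=F$ (your checks that $\widetilde{F}_1=F_1$ and that $\widetilde{F}_n$ vanishes when an argument lies in $X$ are correct), and $F$ would be $C_\infty$ because $E^*$ kills shuffles. But the step you yourself label as ``substantial'' is a genuine gap, and it is where the entire difficulty lives. Your evidence for it is an arity-two check plus the heuristic that one can ``average the $A_\infty$ relations over the permutations constituting $E^*$''. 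Arity two is misleadingly easy: there the relation is linear in the unknown higher coefficients. From arity four onward the morphism relation contains terms that are nonlinear in the Taylor coefficients, e.g.\ $R_2(\widetilde{F}_2\otimes\widetilde{F}_2)=R_2\bigl((F_2 E^*_2)\otimes(F_2 E^*_2)\bigr)$, and the averaging procedure produces instead expressions of the form $R_2(F_2\otimes F_2)$ precomposed with the permutations occurring in $E^*_4$. Since $\overline{\Delta}\circ E^*\neq (E^*\otimes E^*)\circ\overline{\Delta}$ and $E^*$ does not commute with the codifferential (already on the bar construction of a commutative algebra with zero differential one has $Q_2\circ E^*_2=0$ while $Q_2\neq0$), there is no mechanism in your sketch that turns the averaged relations for $F$ into the relations for $\widetilde{F}$; the ``collapse'' you invoke is exactly what would have to be proved, and it is at least as hard as the lemma itself.

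Note also that your key step makes no use of the standing hypothesis $\bs Y\subset Q_1(\bs X)$ or of the vanishing of $F_n$ on $X$-arguments: you are implicitly claiming that \emph{any} $A_\infty$ morphism between \emph{any} two $C_\infty$ algebras can be canonically rectified, via $E^*$, to a $C_\infty$ morphism with prescribed corestriction. That is a far stronger assertion than Lemma \ref{lemma: uniqueness 2}, and nothing in the paper (or in the standard Eulerian-idempotent toolkit you cite) supplies it. The paper's own proof avoids this entirely: it proceeds by induction on the arity, uses the hypothesis to write $\bs y_1=Q_1(\bs x_1)$, and then exploits the biderivation formalism of Appendix \ref{appendix: C-infty} (Lemmas \ref{lem:F-derivation}, \ref{lemma: B3}, \ref{lemma: B4}) to see that the defect $H=R_{\geq2}F_{<n}-F_{<n}Q_{\geq2}$ maps shuffles to shuffles, which are killed by the projection $p$. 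To repair your argument you would either have to prove your rectification claim in full (a substantial combinatorial/Hopf-algebraic undertaking, whose truth in this generality is not clear), or abandon it in favour of an induction of the paper's type that actually uses the acyclicity-type hypothesis.
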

\begin{proof} 
We denote the Taylor coefficients of the $C_\infty$ algebra structure on $V$ and $W$ by $(Q_i)_{i\ge 1}$ and $(R_i)_{i\ge 1}$, respectively.
We have to show $F_n((\bs y_1\otimes\cdots\otimes\bs y_i)\circledast(\bs y_{i+1}\otimes\cdots\otimes\bs y_n))=0$ for all $n\geq1$, $1\leq i<n$ and $y_1,\ldots,y_n\in Y$. The case $n=1$ being empty, we use induction: in particular, we can consider
the morphism of graded coalgebras $F_{<n}:\overline{T}(\bs V)\to\overline{T}(\bs W)$, whose Taylor coefficients
are $(F_{<n})_i = F_i$ for $i<n$ and $(F_{<n})_i=0$ for $i\ge n$, and according to the inductive hypothesis and Lemma \ref{lemma: B4}, this is a morphism of graded bialgebras. We define coderivations $Q_{\geq2}$ on $\overline{T}(\bs V)$ and $R_{\geq2}$ on $\overline{T}(\bs W)$ by declaring their Taylor coefficients to be $(Q_{\geq2})_i=Q_i$, $(R_{\geq2})_i = R_i$ if $i\ge 2$
and $(Q_{\geq2})_1=(R_{\geq2})_1=0$: according to Lemma \ref{lemma: B3} these are biderivations. 
Thus $H:=R_{\geq2}F_{<n}-F_{<n}Q_{\geq2}:\overline{T}(\bs V)\to\overline{T}(\bs W)$ is an $F_{<n}$-biderivation by Lemma \ref{lem:F-derivation}, and in particular it sends the image of the shuffle product in $\overline{T}(\bs V)$ into the image of the shuffle product in $\overline{T}(\bs W)$. As in the proof of the previous lemma we choose $x_1$ such that $Q_1(\bs x_1)=\bs y_1$: we finally compute, denoting by $p:\overline{T}(\bs W)\to \bs W$ the natural projection, that
\begin{eqnarray*} && F_n((\bs y_1\otimes\cdots\otimes\bs y_i)\circledast(\bs y_{i+1}\otimes\cdots\otimes \bs y_n))=\\
&& \hspace{1cm} =(F_nQ_1-R_1F_n)((\bs x_1\otimes \bs y_2\otimes\cdots\otimes\bs y_i)\circledast(\bs y_{i+1}\otimes\cdots\otimes \bs y_n) )\\
&& \hspace{1cm}= \left( \sum_{i=2}^n R_i F^i_n -\sum_{j=1}^{n-1} F_jQ^j_n\right)((\bs x_1\otimes \bs y_2\otimes\cdots\otimes\bs y_i)\circledast(\bs y_{i+1}\otimes\cdots\otimes \bs y_n) )\\ && \hspace{1cm} =pH((\bs x_1\otimes \bs y_2\otimes\cdots\otimes\bs y_i)\circledast(\bs y_{i+1}\otimes\cdots\otimes \bs y_n))\\
&& \hspace{1cm} =0, 
\end{eqnarray*} since $p$ vanishes on the image of the shuffle product. 
\end{proof}

\begin{corollary}\label{corollary: uniqueness of gamma}
$\varphi: \Omega^*([0,1]) \to C^*_\infty([0,1])$
is a morphism of $C_\infty$ algebras.
\end{corollary}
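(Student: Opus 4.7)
The plan is to apply Lemma \ref{lemma: uniqueness 2} directly, taking $V = \Omega^*([0,1])$ (regarded as a $C_\infty$ algebra via its commutative dg algebra structure), $W = C_\infty^*([0,1])$, and the decomposition $V = X \oplus Y$ with $X = \Omega^0([0,1])$ and $Y = \Omega^1([0,1])$. This is precisely the situation the abstract uniqueness framework developed in Lemmas \ref{lemma: uniqueness 1} and \ref{lemma: uniqueness 2} was designed to accommodate.

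First I would verify the structural hypothesis $\bs Y \subset Q_1(\bs X)$. Since $Q_1$ encodes the de Rham differential $d$, the requirement is that every one-form on $[0,1]$ be exact. In the smooth setting this follows from contractibility of $[0,1]$; in the polynomial setting it is immediate from term-by-term antidifferentiation in $\field[t]\,dt$. In either case, $d\colon \Omega^0([0,1]) \twoheadrightarrow \Omega^1([0,1])$ is surjective, so the hypothesis is met.

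Next I would observe that $\varphi = \log \circ \lambda$ satisfies the vanishing property demanded by the final clause of Lemma \ref{lemma: uniqueness 1}: its higher Taylor coefficients $\varphi_n$ ($n\geq 2$) vanish whenever one of their arguments is a zero-form. This is immediate from the analogous property of both $\lambda$ (from Subsection \ref{subsection: lambda}) and $\log$ (from Subsection \ref{subsection: comparison}), and is precisely what was recorded in Corollary \ref{corollary: essential uniqueness}. All hypotheses of Lemma \ref{lemma: uniqueness 2} are therefore satisfied, and its conclusion is that $\varphi$ is a morphism of $C_\infty$ algebras.

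There is effectively no obstacle here, since Lemma \ref{lemma: uniqueness 2} has already done the substantive work: the present corollary is a packaging step that invokes the lemma with concrete choices of $V$, $W$, $X$ and $Y$ and points to the (already established) vanishing of $\varphi_n$ on zero-form arguments. This also provides the promised second proof of Corollary \ref{corollary: C-infty}, independent of the symmetry property of the adjoint Eulerian idempotent $E^*$ used in Subsection \ref{subsection: varphi}.
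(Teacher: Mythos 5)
Your proposal is correct and follows exactly the route the paper intends: Corollary \ref{corollary: uniqueness of gamma} is stated as an immediate consequence of Lemma \ref{lemma: uniqueness 2} applied with $V=\Omega^*([0,1])$, $X=\Omega^0([0,1])$, $Y=\Omega^1([0,1])$, $W=C^*_\infty([0,1])$ and $F=\varphi$, using the surjectivity of $d$ on the interval and the vanishing of the higher $\varphi_n$ on zero-form arguments. Your verification of these hypotheses matches the paper's (implicit) argument, so there is nothing to add.
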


\begin{remark} 
Since $\lambda$, $\exp$, $\log$, $\varphi$ are all compatible with the simplicial structure on $[0,1]$,
we can extend them to morphisms over any $1$-dimensional simplicial complex $T$, and Corollary \ref{corollary: uniqueness of gamma} still holds.
If, moreover, $H_1(T)=0$, we can apply the previous lemmas to  $\Omega^*(T)=\Omega^0(T)\oplus\Omega^1(T)$ and $C^*(T)=C^0(T)\oplus C^1(T)$, respectively, to obtain uniqueness results parallel to Corollary \ref{corollary: essential uniqueness}.
	\end{remark}


\begin{lemma}
For $n>1$ the Taylor coefficients of the $A_\infty$ morphism $\Omega^*([0,1]) \to C_\infty^*([0,1])$
obtained via homological perturbation theory vanish on $n$-fold tensor products
which contain a factor that is a zero-form.
\end{lemma}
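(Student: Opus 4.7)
The HPT morphism $F: \Omega^*([0,1]) \to C_\infty^*([0,1])$ has Taylor coefficients expressible as sums over rooted planar trees on $n$ leaves, with internal vertices decorated by the wedge product and internal edges by Dupont's chain homotopy $h$, as in the formulations of Huebschmann--Kadeishvili and Markl cited in the paper. My plan is to prove the vanishing by induction on $n$, exploiting two key features of Dupont's contraction: $h$ vanishes on $\Omega^0([0,1])$ and takes values in $\Omega^0([0,1])$, and the wedge of any two 1-forms in $\Omega^*([0,1])$ is zero since $\Omega^k([0,1]) = 0$ for $k \geq 2$.

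Fix $n \geq 2$ and assume the inductive hypothesis that $F_k$ vanishes on inputs containing a 0-form factor for every $2 \leq k < n$. Consider $F_n$ evaluated at inputs with $\omega_j = f \in \Omega^0([0,1])$. Tracing degrees upwards through each tree, the output of any internal subtree is a 0-form (having passed through $h$); therefore every non-root internal vertex must receive at least one 1-form contribution directly from a leaf, otherwise its wedge output is a 0-form that gets annihilated by the subsequent $h$ on its outgoing edge. In particular, the leaf carrying $f$ must either sit at a cherry with a leaf sibling carrying a 1-form, or (for $n \geq 3$) be a direct child of the root whose sibling subtree contributes a 0-form. In the cherry case, wedging produces a 1-form which the overlying $h$ converts into the effective 0-form $h(f \wedge \omega_{j \pm 1})$; collapsing the cherry identifies the sum of trees with this cherry position with an evaluation of $F_{n-1}$ on an input sequence containing this effective 0-form, which vanishes by the inductive hypothesis. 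The direct-at-root case reduces similarly: the root-level wedge of $f$ with a 0-form from the sibling subtree yields a 0-form which, under a parallel collapse, reorganizes into an $F_{n-1}$-evaluation with a 0-form argument.

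The main obstacle I foresee is making the cherry-collapse bijection precise: matching decorations, weights, and combinatorial factors on both sides so that the identification with an $F_{n-1}$-evaluation is unambiguous. Any combinatorial discrepancies should be resolvable by careful tracking of edge labels in the HPT tree sum and invoking the side conditions $h^2 = 0$ and $\pi h = 0$ to eliminate degenerate contributions. A cleaner alternative would be to pursue an abstract characterization of the HPT morphism --- for instance, via a suitable compatibility with $\iota_\infty$ --- and then combine it with Lemma \ref{lemma: uniqueness 1} to deduce $F = \varphi$ directly, thereby sidestepping the combinatorial analysis entirely.
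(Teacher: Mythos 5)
Your sketch has two genuine gaps, and they sit exactly where the real content of the lemma lies. First, the case in which the zero-form $f$ meets the projection $\pi$ at the root cannot be disposed of by any purely combinatorial collapse: the abstract side conditions $h^2=0$, $h\iota=0$, $\pi h=0$ do not force the relevant term to vanish, and your claim that this case ``reorganizes into an $F_{n-1}$-evaluation with a zero-form argument'' is not correct -- summing the trees in which $f$ is attached directly to the root produces $\pi\big(f\wedge h(\,\cdot\,)\big)$ applied to an $h$-rooted (inclusion/homotopy-type) tree sum, which is not a projection-type coefficient, and for $n=2$ there is no smaller coefficient to fall back on at all. Concretely, the perturbation recursion gives $\pi_2=\pi Q_2\big((-h)\otimes\iota\pi+\id\otimes(-h)\big)$, so on $\bs f\otimes \bs a(t)dt$ the surviving term is $\mp\,\pi\big(f\cdot h(a(t)dt)\big)$; this vanishes only because of a special feature of Dupont's contraction, namely that $\pi$ restricted to functions is evaluation at the endpoints (hence multiplicative) while the image of $h$ vanishes there, so $\pi\big(f\cdot h(\alpha)\big)=\pi(f)\cup\pi(h(\alpha))=0$. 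Nothing in your argument supplies this input, and without it the statement is simply false for a general contraction satisfying the side conditions; note also that your dichotomy (cherry with a one-form leaf sibling versus root with a zero-form sibling) does not even cover the $n=2$ configuration, where the cherry is the root and the sibling is a one-form.

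Second, the ``cherry-collapse bijection'' that you defer is not a bookkeeping detail but the main burden of your route, and it is built on an unspecified tree formula: for the projection-type morphism the perturbation-lemma expansion is governed by $\pi_n=\pi_{n-1}Q^{n-1}_nH^n_n$ with $H^n_n=\sum_{i}\id^{\otimes i}\otimes(-h)\otimes(\iota\pi)^{\otimes n-i-1}$, so the correct expansion carries $\iota\pi$-decorations as well as $h$'s, whereas the naive decoration you describe would give $F_2(\bs f\otimes\bs a(t)dt)=\pi(f\,a(t)dt)\neq0$, i.e. the lemma would fail for the map you actually wrote down. The paper's proof avoids all of this: it runs the induction directly on the recursion above, verifying $n=2$ by the endpoint argument just described and observing that $Q^{n-1}_nH^n_n$ preserves total degree, so the inductive step is immediate. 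Finally, your proposed shortcut via Lemma \ref{lemma: uniqueness 1} is circular for this particular statement: the uniqueness lemma (and hence Corollary \ref{corollary:homotopy transfer}) can only be applied to the transferred morphism once one already knows that its higher Taylor coefficients vanish on arguments containing a zero-form, which is precisely what is to be proved.
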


\begin{proof}
Given the contraction data $\iota,\pi,h$ from $\Omega^*([0,1])$ to $C^*([0,1])$ as in Section \ref{subsection: Whitney forms}, we denote by \[ H^n_n=\sum_{i=0}^{n-1}\id^{\otimes i}\otimes (-h)\otimes(\iota\pi)^{\otimes n-i-1}:T^n(\bs\Omega^*([0,1]))\to T^n(\bs\Omega^*([0,1])),\] \[ Q^{n-1}_{n}=\sum_{i=0}^{n-2}\id^{\otimes i}\otimes Q_2\otimes\id^{\otimes n-i-2}:T^n(\bs\Omega^*([0,1]))\to T^{n-1}(\bs\Omega^*([0,1])),\] where $Q_2:T^2(\bs\Omega^*([0,1]))\to\bs\Omega^*([0,1])$ is the quadratic part of the codifferential, encoding the wedge product on $\Omega^*([0,1])$, and finally by $\pi_n:T^n(\bs\Omega^*([0,1]))\to \bs C^*([0,1])$ the Taylor coefficients of the $A_\infty$ morphism $\pi_\infty:\Omega^*([0,1])\to C^*_\infty([0,1])$ induced via homotopy transfer. According to the usual perturbation formulas, cf. \cite{Huebschmann-Kadeishvili,Markl}, the maps $\pi_n$ are determined recursively by $\pi_1=\pi$,
\[ \pi_n = \pi_{n-1}Q^{n-1}_n H^n_n\qquad\mbox{for $n\geq2$},\]
and we want to prove that for $n\geq2$ they vanish on tensor products of total degree less than $0$. For $n=2$ we have $\pi_2=\pi Q_2 H^2_2$: if both arguments are functions, this vanishes by degree reasons, while if exactly one argument is a function, it vanishes since $\pi$ vanishes on functions of the form $f(t)h(a(t)dt)$, as $\pi:\Omega^0([0,1])\to C^0([0,1])$ is strictly multiplicative and the image of $h$ is contained in the kernel of $\pi$. For $n\geq 3$ the thesis follows by a straightforward induction, since $Q^{n-1}_n H^n_n:T^n(\bs\Omega([0,1]))\to T^{n-1}(\bs\Omega([0,1]))$ preserves the total degree.\end{proof}

We obtain the following result as an immediate consequence of Lemma \ref{lemma: uniqueness 1}:

\begin{corollary}\label{corollary:homotopy transfer}
The $A_\infty$ morphism $\Omega^*([0,1]) \to C^*_\infty([0,1])$
obtained from homological perturbation theory -- see \cite{Huebschmann-Kadeishvili,Markl} --
 coincides with the $C_\infty$ morphism $\varphi: \Omega^*([0,1]) \to C^*_\infty([0,1])$.
\end{corollary}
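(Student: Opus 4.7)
The plan is to apply the uniqueness statement in Lemma \ref{lemma: uniqueness 1} directly. I set $V = \Omega^*([0,1])$ with decomposition $X = \Omega^0([0,1])$ and $Y = \Omega^1([0,1])$, and $W = C^*_\infty([0,1])$. The hypothesis $\bs Y \subset Q_1(\bs X)$ is immediate from the de Rham differential: any one-form $a(t)\, dt$ (smooth or $\field$-polynomial) is the exterior derivative of $F(t) = \int_0^t a(s)\, ds$, so $\bs a(t)dt = Q_1(\bs F(t))$. The linear parts of both $\varphi$ and the morphism $\pi_\infty$ produced by homological perturbation theory equal $\pi$: this is built into both constructions from the inclusion-projection-homotopy data of Dupont's contraction.

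Next I would verify that the higher Taylor coefficients of both morphisms vanish whenever one of their arguments lies in $X = \Omega^0([0,1])$. For $\varphi = \log \circ \lambda$, this was already observed (and used) in Corollary \ref{corollary: essential uniqueness}, since both $\lambda_n$ ($n \ge 2$) and $\log_n$ ($n \ge 2$) vanish on tensor products containing a zero-form factor. For the morphism obtained via homological perturbation theory, this is precisely the content of the preceding lemma.

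With both morphisms satisfying the same linear-part condition and the same vanishing property relative to the decomposition $X \oplus Y$, the final clause of Lemma \ref{lemma: uniqueness 1} asserts that such an $A_\infty$ morphism is unique. Therefore $\varphi$ and the homotopy transfer morphism coincide. There is no genuine obstacle here: the whole content has been pushed into the preceding lemma (about the vanishing property of the perturbation-theoretic morphism) and into the abstract uniqueness result of Lemma \ref{lemma: uniqueness 1}; this corollary is purely an assembly step.
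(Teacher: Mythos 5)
Your proposal is correct and follows the paper's own argument exactly: the paper likewise deduces this corollary immediately from Lemma \ref{lemma: uniqueness 1} (with $X=\Omega^0([0,1])$, $Y=\Omega^1([0,1])$), using that both $\varphi$ and the perturbation-theoretic morphism have linear part $\pi$ and higher Taylor coefficients vanishing on arguments containing a zero-form, the latter fact being the content of the preceding lemma.
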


We next show that similar uniqueness results hold for the $C_\infty$ algebra structure on $C^*_\infty([0,1])$ and the $C_\infty$ morphism $\mu$.

\begin{proposition} \label{proposition: uniqueness of induced structure}
The unital $C_\infty$ algebra structure on $C^*_\infty([0,1])$, as in Theorem \ref{theorem: induced structure}, is the only one with linear part $m_1(\bs t)=-\bs dt$, quadratic part satisfying $m_2(\bs t\otimes\bs t)=\bs t$ and higher Taylor coefficients vanishing unless precisely one argument is a zero-form. The morphism $\mu$ is the only unital $C_\infty$ morphism from $C^*_\infty([0,1])$ to $\Omega^*([0,1])$ with linear part the inclusion.
\end{proposition}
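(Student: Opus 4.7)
The strategy for both assertions is inductive in the arity $n$ of the Taylor coefficients, using the $A_\infty$ and $C_\infty$ shuffle identities in the spirit of Lemma \ref{lemma: uniqueness 1} and Corollary \ref{corollary: essential uniqueness}.

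For the first assertion, I would suppose $m, m'$ are two unital $C_\infty$ structures on $C^*([0,1])$ satisfying the hypotheses and show $m = m'$ by induction on $n$. The case $n = 2$ follows from unitality, the prescribed $m_2(\bs t \otimes \bs t) = \bs t$, the degree bound $m_2(\bs dt \otimes \bs dt) = 0$ (no $C^2([0,1])$), the $C_\infty$ shuffle identity $m_2(\bs t \otimes \bs dt) + m_2(\bs dt \otimes \bs t) = 0$, and the quadratic $A_\infty$ relation on $\bs t \otimes \bs t$, which together uniquely fix $m_2(\bs t \otimes \bs dt) = \tfrac{1}{2}\bs dt$ and $m_2(\bs dt \otimes \bs t) = -\tfrac{1}{2}\bs dt$. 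For the inductive step $n \geq 3$, the vanishing hypothesis and unitality reduce the problem to determining the scalars $c_i^{(n)}$ defined by $m_n(S_i^{(n)}) = c_i^{(n)} \bs dt$, where $S_i^{(n)} := \bs dt^{\otimes i} \otimes \bs t \otimes \bs dt^{\otimes n-1-i}$. The $C_\infty$ shuffle relations among the $S_i^{(n)}$ provide $n - 1$ independent constraints on these $n$ scalars, leaving a one-parameter family of solutions. The final constraint is extracted from the $A_\infty$ identity at arity $n + 1$ applied to $\bs t \otimes \bs t \otimes \bs dt^{\otimes n-1}$: the contraction $m_2(\bs t \otimes \bs t) = \bs t$ contributes a term involving $c_0^{(n)}$, while all other $m_j$ contributions with $3 \leq j \leq n$ are determined inductively, pinning down the Bernoulli values of Theorem \ref{theorem: induced structure}.

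For the second assertion, let $\mu'$ be another unital $C_\infty$ morphism with linear part the inclusion. Combining the $C_\infty$ shuffle identities on $\mu'$ with degree bounds in $\bs\Omega^*([0,1])$ first shows that the higher Taylor coefficients $\mu'_n$, $n \geq 2$, vanish unless precisely one argument is a zero-form: inputs with two or more zero-forms land in trivial degrees, while the shuffle $\bs dt \circledast \bs dt^{\otimes n-1} = n \bs dt^{\otimes n}$ forces $\mu'_n(\bs dt^{\otimes n}) = 0$ for $n \geq 2$. The remaining unknowns are functions $A_i^{(n)}(t) \in \Omega^0([0,1])$ with $\mu'_n(S_i^{(n)}) = \bs A_i^{(n)}(t)$. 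The $A_\infty$ morphism relation at arity $n$ on $S_i^{(n)}$ produces, using induction, a first-order ODE for $A_i^{(n)}(t)$, determined up to a constant of integration. The shuffle identities on the $A_i^{(n)}$ restrict these constants to a one-parameter family, and the $A_\infty$ morphism relation at arity $n + 1$ on $\bs t \otimes \bs t \otimes \bs dt^{\otimes n - 1}$ (argued analogously to Part 1) fixes the remaining parameter, recovering the Bernoulli polynomial formulas for $\mu$ from the corresponding Proposition of Subsection \ref{subsection: Whitney forms}.

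The main subtlety, in each part, is verifying that the linear relation extracted from the $A_\infty$ identity at arity $n+1$ has a non-zero coefficient in the free parameter, so that the final degree of freedom is indeed pinned down. Since the Bernoulli structure of Theorem \ref{theorem: induced structure} (respectively, the explicit formula for $\mu$) already provides a solution, this reduces to checking that the associated homogeneous system has only the trivial solution, a combinatorial verification at each arity.
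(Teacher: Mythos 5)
Your toolkit (shuffle relations to reduce each arity to a single unknown, degree reasons, and the structure/morphism identity evaluated on $\bs t\otimes\bs t\otimes(\bs dt)^{\otimes\,\cdot}$) is the right one, but the closing step of your induction does not close. At stage $n$ you reduce, via the shuffle relations, to the single scalar $c_0^{(n)}$ and propose to fix it with the $A_\infty$ identity at arity $n+1$ on $\bs t\otimes\bs t\otimes(\bs dt)^{\otimes n-1}$, asserting that all other contributions come from $m_j$ with $3\le j\le n$ and are known. This accounting omits the term in which the linear part of the codifferential acts on one of the two $\bs t$ factors and the top coefficient $m_{n+1}$ is then applied to the full length-$(n+1)$ tensor: since $m_1(\bs t)=-\bs dt$, this produces $m_{n+1}(\bs dt\otimes\bs t\otimes(\bs dt)^{\otimes n-1})$ and $m_{n+1}(\bs t\otimes(\bs dt)^{\otimes n})$, i.e.\ precisely the single-zero-form values of $m_{n+1}$, which are \emph{not} killed by the vanishing hypothesis and are unknown at your stage $n$. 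So the arity-$(n+1)$ identity is a relation among $c_0^{(n)}$, $c_0^{(n+1)}$, $c_1^{(n+1)}$ and lower data, and it does not pin down $c_0^{(n)}$ as claimed. The same defect recurs in your argument for $\mu$, where the closing relation at arity $n+1$ involves $\mu'_{n+1}$ on single-zero-form inputs. On top of this, you defer the essential non-degeneracy (``non-zero coefficient in the free parameter'') to an unverified combinatorial check, which in your organization is exactly the crux of the matter.

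The repair is to run the recursion the other way, which is what the paper does: use the arity-$(n+1)$ identity on $\bs t^{\otimes 2}\otimes(\bs dt)^{\otimes n-1}$ to solve for the \emph{arity-$(n+1)$} coefficient. The shuffle symmetry $m_{n+1}((\bs dt)^{\otimes i}\otimes\bs t\otimes(\bs dt)^{\otimes n-i})=(-1)^i\binom{n}{i}m_{n+1}(\bs t\otimes(\bs dt)^{\otimes n})$ (your $n-1$ constraints, one level up) turns the term you omitted into $(n+1)\,m_{n+1}(\bs t\otimes(\bs dt)^{\otimes n})$, with manifestly non-zero coefficient, while the hypothesis $m_{n+1}(\bs t^{\otimes 2}\otimes(\bs dt)^{\otimes n-1})=0$ (respectively, degree reasons in the case of $\mu$) removes the other arity-$(n+1)$ contribution; the remaining terms $m_iM^i_{n+1}$, $2\le i\le n$, involve only $m_2,\dots,m_n$ (respectively $\mu_1,\dots,\mu_n$ and the product on $\Omega^*([0,1])$). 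This determines $m_{n+1}$, resp.\ $\mu_{n+1}$, from lower coefficients with no ODE, no integration constant, and no residual nondegeneracy left to verify. Your preliminary reductions (vanishing on inputs with two or more zero-forms by degree, $\mu'_n((\bs dt)^{\otimes n})=0$ via the shuffle, and the $n=2$ base case) are correct and agree with the paper's proof.
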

\begin{proof} We shall denote by $M$ the codifferential on $\overline{T}(\bs C^*([0,1]))$ encoding the $C_\infty$ algebra structure, by $m_{n}$ its Taylor coefficients as in Theorem \ref{theorem: induced structure} and by $M^i_n$ the composition 
	\[ M^i_n:T^n(\bs C^*([0,1]))\hookrightarrow \overline{T}(\bs C^*([0,1]))\xrightarrow{M}\overline{T}(\bs C^*([0,1]))\twoheadrightarrow T^i(\bs C^*([0,1])). \] Notice that the higher coefficients $m_{n+1}$, $n\ge1$, vanish by degree reasons unless precisely one or two of the arguments are zero-forms. To illustrate the result we check the first claim  directly for $m_2$: by the $C_\infty$ property $0=m_2(\bs t\circledast \bs dt)= m_2(\bs t\otimes \bs dt) + m_2(\bs dt\otimes \bs t)$, where $\circledast$ is the shuffle product (cf. Appendix \ref{appendix: C-infty}), hence 
	\[ 2m_2(\bs t\otimes \bs dt)=m_2(\bs t\otimes\bs dt)-m_2(\bs dt\otimes\bs t)=m_2M^2_2(\bs t\otimes\bs t)=-m_1m_2(\bs t\otimes\bs t) =\bs dt,  \]
	from which we get $m_2(\bs t\otimes\bs dt) = \frac{1}{2}\bs dt = - m_2(\bs dt\otimes \bs t)$. Next we assume inductively to have shown the thesis up to a certain $n$. First of all, the $C_\infty$ property implies \[ m_{n+1}((\bs dt)^{\otimes i}\otimes\bs t\otimes (\bs dt)^{\otimes n-i})=(-1)^i\binom{n}{i}m_{n+1}(\bs t\otimes(\bs dt)^{\otimes n}). \] In fact, since this is clear for $i=0$, it follows in general by induction on $i$ and
	\begin{multline*} 0 = m_{n+1}(\bs dt\circledast ((\bs dt)^{\otimes i-1}\otimes\bs t\otimes(\bs dt)^{\otimes n-i})) = \\= im_{n+1}((\bs dt)^{\otimes i}\otimes\bs t\otimes (\bs dt)^{\otimes n-i}) + (n-i+1)m_{n+1}((\bs dt)^{\otimes i-1}\otimes\bs t\otimes (\bs dt)^{\otimes n-i+1}).  \end{multline*}
	Combined with the fact that $M$ is a codifferential, this shows
	\[ (n+1)m_{n+1}(\bs t\otimes(\bs dt)^{\otimes n}) = m_{n+1}M^{n+1}_{n+1}(\bs t^{\otimes 2}\otimes (\bs dt)^{\otimes n-1})=-\sum_{i=1}^n m_iM^i_{n+1}(\bs t^{\otimes 2}\otimes (\bs dt)^{\otimes n-1}). \]
	Since by hypothesis $m_{n+1}(\bs t^{\otimes 2}\otimes (\bs dt)^{\otimes n-1}) = 0$ for $n\ge2$, 
	\[ (n+1)m_{n+1}(\bs t\otimes(\bs dt)^{\otimes n})=-\sum_{i=2}^n m_iM^i_{n+1}(\bs t^{\otimes 2}\otimes (\bs dt)^{\otimes n-1}), \]
	which proves the thesis inductively, as the right hand side only depends on $m_2,\ldots, m_n$.
	
	The claim about $\mu$ is proven similarly. For $n\ge 2$, by degree reasons $\mu_n$ vanishes unless precisely one or none of the arguments are zero-forms. In the latter case, by the $C_\infty$ property $\mu_n((\bs dt)^{\otimes n})=\frac{1}{n!}\mu_n((\bs dt)^{\circledast n})=0$. In the former case, the $C_\infty$ property implies $\mu_{n+1}((\bs dt)^{\otimes i}\otimes\bs t\otimes (\bs dt)^{\otimes n-i})=(-1)^i\binom{n}{i}\mu_{n+1}(\bs t\otimes(\bs dt)^{\otimes n})$ as before. In particular, we see that $(n+1)\mu_{n+1}(\bs t\otimes(\bs dt)^{\otimes n}) = \mu_{n+1}M^{n+1}_{n+1}(\bs t^{\otimes 2}\otimes (\bs dt)^{\otimes n-1})$, and using the facts that $\mu_{n+1}(\bs t^{\otimes 2}\otimes (\bs dt)^{\otimes n-1})=0$ by degree reasons and $\mu$ commutes with the codifferentials, we conclude as before that the right hand side only depends on $\mu_1,\ldots,\mu_n$.
\end{proof}
\begin{remark} In contrast with the final claim of the previous proposition, there can be several {\em $A_\infty$ morphisms} $C^*_\infty([0,1])\to\Omega^*([0,1])$ whose linear part is the inclusion. For instance, a direct verification shows that $F:C^*_\cup([0,1])\to \Omega^*([0,1])$, defined in Taylor coefficients $F_1,\ldots,F_n,\ldots$ by
	\begin{itemize} \item $F$ is unital and $F_1$ is the inclusion;
		\item $F_n((\bs dt)^{\otimes n})= (-1)^{n-1}\bs (t^{n-1}dt)$ for $n\ge 1$;
		\item $F_n((\bs dt)^{\otimes (n-1)}\otimes\bs t)= (-1)^n\bs(t^{n-1}(1-t))$ for $n\ge2$;
		\item For $n\ge2$, $F_n$ vanishes if an argument different from the rightmost one is a zero-form; \end{itemize}
	is a unital $A_\infty$ morphism (which is right inverse to $\lambda$, by a straightforward application of Lemma \ref{lemma:  uniqueness 1}).
Therefore $F\circ\exp:C^*_\infty([0,1])\to\Omega^*([0,1])$ is an $A_\infty$ morphism different from $\mu$, whose linear part is the inclusion. 
\end{remark}

The proof of Proposition \ref{proposition: uniqueness of induced structure}
leads to the following result, which is of independent interest:

\begin{proposition}\label{theorem: automorphisms}

Let $\mathsf{Aut}_\infty(C^*_\infty([0,1]))$ be the group
of unital $C_\infty$ automorphisms of $C^*_\infty([0,1])$,
and $\mathrm{GL}(C^*([0,1])) \cong \field^* \ltimes \field \cong \mathsf{Aff}(\field)$
the group of those automorphisms of the complex $C^*([0,1])$
which map $1$ to itself. The correspondence
\begin{eqnarray*}
r: \mathsf{Aut}_\infty(C^*_\infty([0,1])) &\to & \mathrm{GL}(C^*([0,1])) \cong \mathsf{Aff}(\field) \\
\psi = (\psi_1,\psi_2,\dots) &\mapsto & \psi_1
\end{eqnarray*}
is an isomorphism of groups.
\end{proposition}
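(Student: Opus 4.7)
The strategy is to identify the target group $\mathrm{GL}(C^*([0,1]))$ concretely, then to establish surjectivity and injectivity separately; the latter will rely crucially on the uniqueness statement for $\mu$ in Proposition \ref{proposition: uniqueness of induced structure} and on the identity $\varphi\circ\mu=\mathrm{id}$. A linear automorphism $\psi_1$ of the complex $C^*([0,1])$ fixing $\bs 1$ is determined by its value $\psi_1(\bs t)=\alpha\bs t+\beta\bs 1$; the chain-map condition combined with $m_1(\bs t)=-\bs dt$ then forces $\psi_1(\bs dt)=\alpha\bs dt$, and invertibility requires $\alpha\in\field^*$. This yields the identification $\mathrm{GL}(C^*([0,1]))\cong\mathsf{Aff}(\field)$, and $r$ is a group homomorphism since the linear part of a composition of $A_\infty$ morphisms is the composition of the linear parts.

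For surjectivity, given $(\alpha,\beta)\in\mathsf{Aff}(\field)$, I consider the affine map $f:t\mapsto\alpha t+\beta$: in the polynomial setting its pullback $f^*$ is a strict dg algebra automorphism of $\Omega^*_\field([0,1])$, so the composition $\psi_{(\alpha,\beta)}:=\varphi\circ f^*\circ\mu:C^*_\infty([0,1])\to C^*_\infty([0,1])$ is a unital $C_\infty$ morphism. A direct computation gives $(\psi_{(\alpha,\beta)})_1(\bs t)=\pi(\alpha t+\beta)=\alpha\bs t+\beta\bs 1$ and $(\psi_{(\alpha,\beta)})_1(\bs dt)=\pi(\alpha\,dt)=\alpha\bs dt$, so $r(\psi_{(\alpha,\beta)})=(\alpha,\beta)$; invertibility of the linear part yields invertibility of $\psi_{(\alpha,\beta)}$ by the standard recursive construction of $A_\infty$ inverses. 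Since the resulting Taylor coefficients depend $\field$-polynomially on $\alpha,\beta$, the same formulas produce the desired automorphism also in the smooth setting.

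For injectivity, it suffices to show that $\psi=\mathrm{id}$ whenever $\psi_1=\mathrm{id}$. Given such a $\psi$, the composition $\mu\circ\psi:C^*_\infty([0,1])\to\Omega^*([0,1])$ is a unital $C_\infty$ morphism whose linear part equals $\mu_1\circ\psi_1=\iota$, the inclusion. By the uniqueness clause of Proposition \ref{proposition: uniqueness of induced structure}, this forces $\mu\circ\psi=\mu$. Applying $\varphi$ on the left and using the identity $\varphi\circ\mu=\log\circ\exp=\mathrm{id}_{C^*_\infty([0,1])}$ yields $\psi=\varphi\circ\mu\circ\psi=\varphi\circ\mu=\mathrm{id}$, as required. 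The principal point to verify carefully in writing this up is that the compositions $\mu\circ\psi$ and $\varphi\circ f^*\circ\mu$ are unital (so that Proposition \ref{proposition: uniqueness of induced structure} applies), which follows from unitality of each factor.
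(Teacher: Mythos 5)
Your proof is correct, and for surjectivity it is exactly the paper's construction: the paper also realizes $(\alpha,\beta)$ as the linear part of $\varphi\circ\rho\circ\mu$, where $\rho$ is the dg algebra automorphism of the polynomial forms $\Omega^*_\field([0,1])$ with $\rho(t)=\alpha t+\beta$ (so the smooth-coefficient worry you address by polynomiality is sidestepped in the paper simply by working with the polynomial model, which is legitimate since the $C_\infty$ algebra $C^*_\infty([0,1])$ does not depend on the ambient de Rham model). Where you genuinely diverge is injectivity: the paper argues that ``the same argument as in the proof of Proposition \ref{proposition: uniqueness of induced structure}'' shows that any unital $C_\infty$ morphism with domain $C^*_\infty([0,1])$ (and target concentrated in degrees $0$ and $1$) is determined by its linear part, i.e.\ it re-runs the inductive argument based on the shuffle relations and the codifferential. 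You instead deduce triviality of the kernel purely formally: if $\psi_1=\mathrm{id}$ then $\mu\circ\psi$ is a unital $C_\infty$ morphism to $\Omega^*([0,1])$ with linear part the inclusion, hence equals $\mu$ by the uniqueness clause of Proposition \ref{proposition: uniqueness of induced structure}, and composing with the left inverse $\varphi$ (using $\varphi\circ\mu=\log\circ\lambda\circ\mu=\log\circ\exp=\mathrm{id}$) gives $\psi=\mathrm{id}$. This is a clean alternative: it avoids repeating the induction and only uses results already stated in the paper, while the paper's route yields the slightly more general statement that such morphisms are determined by their linear parts, which is what it quotes. Both arguments are complete; your remaining care about unitality of the composites and invertibility of a $C_\infty$ morphism with invertible linear part is warranted and correctly handled.
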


\begin{proof}
That the map $r$ is a morphism of groups is clear. The same argument as in the proof of Proposition \ref{proposition: uniqueness of induced structure} shows that a $C_\infty$ morphism with domain $C^*_\infty([0,1])$ is uniquely determined by its linear part, hence $r$ is injective.

To conclude the proof, we have to show that $r$ is surjective.
Let us fix an automorphism $\xi$ of the complex $C^*([0,1])$ mapping $1$ to itself. Evidently, $\xi$ is determined by its value on $t$, given by 
$$ \xi(t) = \alpha \, t + \beta,$$
for $\alpha \in \field^*$ and $\beta \in \field$ two constants.
Our aim is to show that $\xi$ lies in the image of $r$.
We define an automorphism $\rho$ of the unital dg algebra $\Omega^*_\field([0,1])=\field[t] \oplus \field[t]dt$ by declaring its action on the generator $t$ to be
$ \rho(t) = \alpha \, t + \beta$. The composition
$$ \tilde{\xi}: C^*_\infty([0,1]) \stackrel{\mu}{\to} \Omega^*_\field([0,1]) \stackrel{\rho}{\to} \Omega^*_\field([0,1]) \stackrel{\varphi}{\to} C^*_\infty([0,1]),$$
is a unital $C_\infty$ automorphism of $C^*_\infty([0,1])$ 
such that $r(\widetilde{\xi})=\xi$.

\end{proof}

We close this section by sketching a relation with the papers \cite{Adams-Hilton,Majewski}, which was also briefly outlined in the introduction.
\newcommand{\ad}{\operatorname{ad}}
\begin{remark}\label{rem:AW diagonal} We denote by $L([0,1])=\widehat{L}(x,y,a)$ the Lawrence-Sullivan model of the interval: this is the free complete graded Lie algebra on generators $x$ and $y$ in degree one and $a$ in degree zero, and the unique differential such that $x,y$ are Maurer-Cartan elements and $a$ is a gauge equivalence between them, see \cite{Lawrence-Sullivan}, namely, 
\[ d(x)=-\frac{1}{2}[x,x], \quad d(y)=-\frac{1}{2}[y,y],\quad d(a)=\ad_a(y)+\sum_{n\geq0}\frac{B_n}{n!}(\ad_a)^n(y-x), \]
where $\ad_a(-)=[a,-]$ is the adjoint. As observed in the paper \cite{Cheng-Getzler}, this is also the Chevalley-Eilenberg dg Lie algebra associated to the $C_\infty$ algebra $C^*_\infty([0,1])$. We shall denote by $\mathcal{U}(L([0,1]))$ its universal enveloping algebra. Following the notations from the introduction, we shall denote by $\Omega C_*([0,1])$ the cobar construction of the dg coalgebra of normalized chains on $[0,1]$, i.e., the complete tensor algebra $\widehat{T}(x,y,a)$ over generators $x,y,a$ as before, equipped with the differential
	\[d(x)=-x^2,\qquad d(y)=-y^2, \qquad d(a)=(1+a)y-x(1+a).   \] 
Notice that both $\Omega C_*([0,1])$ and $\mathcal{U}(L([0,1]))$ have the same underlying graded algebra $\widehat{T}(x,y,a)$, and only the differentials differ. The $A_\infty$ isomorphism $\exp: C^*_\infty([0,1])\to C^*_\cup([0,1])$ from Subsection \ref{subsection: comparison} yields an isomorphism of dg algebras
\[ \Omega C_*([0,1])\xrightarrow{\cong} \mathcal{U}(L([0,1])),\quad x\mapsto x,\quad y\mapsto y,\quad a\mapsto e^a-1.\]	
In particular, there is an induced cocommutative dg Hopf algebra structure on $\Omega C_*([0,1])$, and it is easy to check that the induced diagonal $\Delta:\Omega C_*([0,1])\to\Omega C_*([0,1])\otimes\Omega C_*([0,1])$ is
\[ \Delta(x)=x\otimes 1+1\otimes x,\quad \Delta(y)=y\otimes 1+1\otimes y,\quad\Delta(a)=a\otimes1+1\otimes a+a\otimes a. \]
Finally, it can be proved, in the spirit of this subsection, that the above $\Delta$ may be characterized as the unique morphism of unital augmented dg algebras satisfying $\Delta(x)=x\otimes 1+1\otimes x, \Delta(y)=y\otimes 1+1\otimes y$: details are left to the interested reader\footnote{We have $\Delta(a)=\sum_{i,j\geq0}r_{i,j}a^i\otimes a^j$ for certain constants $r_{i,j}\in\field$: then $r_{0,0}=0$, since $\Delta$ is a morphism of \emph{augmented} dg algebras,  and one checks, using the fact that $\widehat{T}(x,y,a)$ is a free algebra, that the remaining $r_{i,j}$ are uniquely determined by $\Delta(x)=x\otimes 1+1\otimes x, \Delta(y)=y\otimes 1+1\otimes y$ and the requirement that $\Delta\circ d(a)=\Delta(y-x)+\Delta(a)\Delta(y)-\Delta(x)\Delta(a)=(d\otimes\id+\id\otimes d)\circ\Delta(a)$.}. From this, one can deduce that the diagonal $\Delta$ coincides with the \emph{Alexander-Whitney cobar diagonal} on $\Omega C_*([0,1])$, constructed as in the paper \cite{Adams-Hilton}.
	\end{remark}

\section{Pushforward and the Magnus expansion}\label{section: pushforward}

In this subsection we present implications of our previous results for differential forms on $[0,1]$ with values in a dg algebra $A$ or a dg Lie algebra $\g$, respectively.

\begin{remark}

We will extend the scalars for $\Omega^*([0,1])$ and $C^*([0,1])$ from $\field$ to either a dg algebra $A$ or a dg Lie algebra $\g$. In order for our previous discussion to remain meaningful, we have to guarantee existence and convergence of certain constructions.
Two instances where this works are:
\begin{enumerate}
\item {\em Pro-case}: Assume that $A$ is unital and augmented and that the augmentation ideal $\overline{A}$ is pro-nilpotent. Correspondingly, assume that $\g$ is pro-nilpotent. Then consider polynomial differential forms on $[0,1]$ with values in $A$ or $\g$.
\item {\em Finite-dimensional case}: Assume that $A$ and $\g$ are finite-dimensional
and consider smooth differential forms on $[0,1]$ with values in $A$ or $\g$.
\end{enumerate}
In both cases we obtain dg algebras $\Omega^*([0,1];A)$ and $C^*_\cup([0,1];A)$, an $A_\infty$ algebra $C^*_\infty([0,1];A)$, as well as a dg Lie algebra
$\Omega^*([0,1];\g)$ and an $L_\infty$ algebra $C^*_\infty([0,1];\g)$. The latter two were described in \cite{Mnev} and \cite{Fiorenza-Manetti}.
Observe that, since $C_\cup^*([0,1])$ is not commutative, extension of scalars to $\g$ is not meaningful in this case (within the world of algebras).
\end{remark}

\subsection{Forms with values in a dg algebra}\label{subsection: coefficients dg algebra}

We first consider extension by a unital dg algebra $A$.
The family of $C_\infty$ quasi-isomorphisms $\varphi_s: \Omega^*([0,1]) \to C_\infty^*([0,1])$ extends to a one-parameter family of $A_\infty$ quasi-isomorphism
$$\varphi_s: \Omega^*([0,1];A) \to C_\infty^*([0,1];A),$$
see Definition \ref{definition: sigma_s}. The explicit formulas from Theorem \ref{theorem: Eulerian} and \ref{theorem: recursion} remain valid in this setting, i.e. they are compatible with scalar extension by $A$ (essentially, because they keep the arguments in order). Notice however that Proposition \ref{theorem: C_oo} fails in the non-commutative case.

\begin{definition}
The pushforward along $\varphi_s$ is the mapping
\begin{eqnarray*}
(\varphi_s)_*: \bs \big(\Omega^0([0,1];A^1)\oplus \Omega^1([0,1];A^0)\big) &\to & \bs \big(C_\infty^0([0,1];A^1)\oplus C_\infty^1([0,1];A^0\big)\cong \bs A^1\oplus\bs A^1 \oplus A^0
\\
f(t) + a(t)dt &\mapsto & \sum_{n\ge 1} \varphi_{s,n}(\bs (f(t) + a(t)dt)\otimes \cdots \otimes \bs (f(t)+a(t)dt)).
\end{eqnarray*}
\end{definition}

\begin{remark}
Since $\varphi_{s,n}$ vanishes for $n>1$ whenever one of the inputs
is a zero-form, we find
\begin{eqnarray*}
\sum_{n\ge 1} \varphi_{s,n}(\bs (f(t) + a(t)dt)\otimes \cdots \otimes \bs (f(t)+a(t)dt)) =\\
 \bs(f(s)t + f(0)(1-t)) + \sum_{n\ge 1} \varphi_{s,n}(\bs a(t)dt\otimes \cdots \otimes \bs a(t)dt).
\end{eqnarray*}
Therefore, we see that the essential information is 
the restriction of $(\varphi_s)_*$ to $\bs \Omega^1([0,1];A^0)$.
\end{remark}

We remark that in the pro-case, we have to restrict the domain of definition of $(\varphi_s)_*$ to
$$\bs \big(\Omega^0([0;1];A^1)\oplus \Omega^1([0,1];\overline{A}^0)\big),$$
i.e. we have to require the one-forms to take values in the augmentation ideal $\overline{A}^0$ of $A^0$.
The reason is that this guarantees that the potentially infinite series in the definition of $(\varphi_s)_*$ is well-defined.
Whenever we consider the pro-case, we will from now on apply this restriction.
\\

The following result was
established independently by Burghart-Mn\"ev-Steinebrunner in \cite{BMS}.

\begin{proposition}\label{proposition: ODE1}
For a given $a(t)dt \in \Omega^1([0,1];A^0)$, consider 
the curve
$$ [0,1]\to A^0, \quad s \mapsto A(s)\bs dt:=(\varphi_s)_*(\bs a(t)dt).$$
Its exponential
$$ e^{A(s)}:=1_A + \sum_{k\ge 0}\frac{1}{k!} (A(s))^{k}.$$
satisfies the differential equation
$$\frac{d}{ds} e^{A(s)} = e^{A(s)} \, a(s), \qquad e^{A(0)}= 1_A.$$
\end{proposition}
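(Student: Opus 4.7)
The plan is to exploit the factorization $\varphi_s = \log \circ \lambda_s$, where $\lambda_s := \lambda \circ \beta_s^*$, together with the fact that pushforwards of $A_\infty$ morphisms compose: $(\varphi_s)_* = (\log)_* \circ (\lambda_s)_*$. This identity holds because the coalgebra morphism associated to a composition of $A_\infty$ morphisms is the composition of the associated coalgebra morphisms, and the pushforward on a group-like element is obtained by evaluating on these coalgebra morphisms.

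First I would compute the intermediate pushforward $(\lambda_s)_*(\bs a(t)dt)$ using Gugenheim's explicit formula from Subsection \ref{subsection: lambda}. Since $\beta_s^*(a(t)dt) = s\cdot a(st)\,dt$ and the higher Taylor coefficients $\lambda_n$ vanish on tensors containing a zero-form, a change of variables $u_i = st_i$ yields
\[
(\lambda_s)_*(\bs a(t)dt) = \bigl(U(s) - 1_A\bigr)\,\bs dt, \qquad U(s):= 1_A + \sum_{n\geq 1}\int\limits_{0\leq u_1 \leq \cdots \leq u_n \leq s} a(u_1)\cdots a(u_n)\,du_1\cdots du_n.
\]
Differentiating termwise (peeling off the outermost integral, whose upper limit is $s$) shows directly that $U(s)$ solves the ODE $\tfrac{d}{ds}U(s) = U(s)\,a(s)$ with $U(0)=1_A$. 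This is the standard path-ordered-exponential / Dyson-series identity for linear ODEs.

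Next I would apply $(\log)_*$. Since the higher Taylor coefficients of $\log$ vanish on tensors containing a zero-form, and satisfy $\log_n(\bs dt \otimes \cdots \otimes \bs dt) = \tfrac{(-1)^{n+1}}{n}\bs dt$, the pushforward reduces to the scalar power series $\log(1+x)$ evaluated on the element $U(s) - 1_A \in A^0$:
\[
A(s)\,\bs dt \;=\; (\log)_*\bigl((U(s)-1_A)\,\bs dt\bigr)\;=\;\sum_{n\geq 1}\frac{(-1)^{n+1}}{n}\bigl(U(s)-1_A\bigr)^n\,\bs dt \;=\; \log\bigl(U(s)\bigr)\,\bs dt.
\]
Hence $e^{A(s)} = U(s)$, and the proposition follows from the ODE satisfied by $U(s)$.

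The only nontrivial bookkeeping is the compatibility $(\varphi_s)_* = (\log)_* \circ (\lambda_s)_*$, which I would either verify at the level of Taylor coefficients by a direct expansion (summing over partitions) or deduce abstractly from the coalgebra-morphism picture. Everything else is a termwise manipulation of the resulting series, justified in the pro-case by pro-nilpotence and in the finite-dimensional case by the usual growth estimates on iterated integrals. I do not foresee a serious obstacle; the point of the proof is precisely that the Magnus expansion arises because $\log \circ \lambda$ is, tautologically, the logarithm of the holonomy, and $\varphi$ was built so that its pushforward computes this logarithm.
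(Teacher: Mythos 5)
Your proposal is correct and follows essentially the same route as the paper: both use compatibility of pushforward with composition to reduce to the explicit Dyson-series formula for $(\lambda_s)_*$ and then differentiate the iterated integrals. The only cosmetic difference is that you apply $\log_*$ to obtain $A(s)=\log U(s)$, whereas the paper applies $\exp_*$ to both sides to get $e^{A(s)}=1_A+(\lambda_s)_*(\bs a(t)dt)$ directly; these are the same observation.
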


\begin{proof}
Since pushforward is compatible with composition of morphisms,
we find that
$$ \exp_*\circ (\varphi_s)_* = \exp_* \circ \log_* \circ (\lambda_s)_* = (\lambda_s)_*,$$
where $\lambda_s = \lambda \circ \beta_s^*$,
with $\beta_s^*$ being
the scaling morphism from Definition \ref{definition: sigma_s}.
We therefore have $e^{A(s)} = 1_A + (\lambda_s)_*(\bs a(t)dt)$.

The pushforward along $\lambda_s$ is given by
$$ (\lambda_s)_*(\bs a(t)dt) = \sum_{n\ge 1} \int\limits_{0\le t_1\le \cdots \le t_n\le s} a(t_1)\cdots a(t_n) dt_1\cdots dt_n.$$
Differentiation with respect to $s$ yields
$$ \frac{d}{ds}\left( 1_A + (\lambda_s)_*(a(t)dt)\right) =  \big(1_A + (\lambda_s)_*(a(t)dt)\big) a(s) $$
and for $s=0$, we have $(1_A + (\lambda_0)_*(a(t)dt)) = 1_A$.  
This concludes the proof.
\end{proof}

\begin{remark} \label{remark: pushforward associative}
\hspace{0cm}
\begin{enumerate}
\item By Theorem \ref{theorem: Eulerian}, we can write
the pushforward along $\varphi$ as
\begin{eqnarray*}
(\varphi_s)_*(\bs a(t)dt) = \left( \sum_{n\ge 1} \frac{1}{n}\int\limits_{0\le t_1 \le \cdots \le t_n \le s} 
 \sum_{\sigma \in S_n}\left(\frac{(-1)^{d_\sigma}}{{n-1 \choose d_\sigma}}
a(t_{\sigma(1)})\cdots a(t_{\sigma(n)})\right)
dt_1 \cdots dt_n\right) \bs dt.
\end{eqnarray*}

\item Alternatively, we may use Theorem \ref{theorem: recursion} to describe the pushforward along $\varphi_s$ as follows. We define maps 
$\sM_{n}: \Omega^0([0,1];A^0)^{\otimes n}\to \Omega^0([0,1],A^0)$ recursively as in Definition \ref{definition: IN formula}. Given $a(t)dt \in \Omega^1([0,1];A^0)$, we simplify the notations and put $\sM_k(s):=\sM_k(a(t)^{\otimes k})(s)$, $\sM_\infty(s):=\sum_{k\geq1}\sM_k(s)$. Then, according to Theorem \ref{theorem: recursion},
$$\sM_\infty(s) \bs dt =\sum_{k\ge 1} (\sM_k(s) \bs dt) := \sum_{n\ge 1} \varphi_{s,n}(\bs a(t)dt \otimes \cdots \otimes \bs a(t)dt) =(\varphi_s)_*(\bs a(t)dt).$$ Differentiating the defining recursion for the maps $\sM_k$, we find
		\begin{eqnarray*} \frac{d}{ds}\sM_\infty(s)&=&a(s)+\sum_{k\geq1}\frac{B_k}{k!}\sum_{j=0}^k(-1)^{j}{k \choose j} \sum_{i_1,\ldots,i_k\geq1}\sM_{i_1}(s)\cdots\sM_{i_j}(s)a(s)\sM_{i_{j+1}}(s)\cdots\sM_{i_k}(s)\\
		&=& \sum_{k\geq0}\frac{B_k}{k!}\sum_{j=0}^k(-1)^{j}\binom{k}{j}\sM_\infty(s)^j a(s)\sM_\infty(s)^{k-j}
		\\&=&\sum_{k\geq0}\frac{B_k}{k!}\left[\cdots\left[a(s),\sM_\infty(s)\right]\cdots,\sM_\infty(s)\right],\end{eqnarray*}		
which is equivalent to		\[
	\sum_{k\geq0}\frac{1}{(k+1)!}\left[\cdots\left[\frac{d}{ds}\sM_\infty(s),\sM_\infty(s)\right]\cdots,\sM_\infty(s)\right]=	
		  a(s).\]
		According to a classical result by Hausdorff, compare with \cite[Theorem 2.1]{Iserles-Norsett}, this shows that $e^{\sM_\infty(s)}$ is the solution to the differential equation $\frac{d}{ds} e^{\sM_\infty(s)} = e^{\sM_\infty(s)} a(s)$ with initial condition $e^{\sM_\infty(0)}=1_A$, and provides another proof of Proposition \ref{proposition: ODE1}. 
		\end{enumerate}
	\end{remark}

\subsection{Forms with values in a dg Lie algebra}\label{subsection: coefficients dg Lie algebra}

For $\g$ a dg Lie algebra, we obtain a one-parameter family of $L_\infty$ quasi-ismorphisms
$$\varphi_s: \Omega^*([0,1];\g) \to C^*_\infty([0,1];\g)$$
from $\varphi_s: \Omega^*([0,1])\to C^*_\infty([0,1])$
by extension of scalars (cf. \cite{Miller} for the defintion of scalar extension of a $C_\infty$ algebra by a dg Lie algebra). By compatibility between scalar extension and homotopy transfer, together with Corollary \ref{corollary:homotopy transfer}, this is the same as the composition of the scaling morphism $\beta_s^*$ and the $L_\infty$ morphism induced via homotopy transfer along the obvious extension of Dupont's contraction (cf. \cite{Fiorenza-Manetti}).

We denote the universal enveloping dg algebra of $\g$ by $\mathcal{U}(\g)$.
By compatibility with the symmetrization functor from $A_\infty$ algebra to $L_\infty$ algebras, $\varphi_s$ may also be characterized by the commutative diagram of $L_\infty$ algebras and $L_\infty$ morphisms, 
\[\xymatrix{ \Omega^*([0,1];\g)\ar@{^(->}[r]\ar[d]_-{\varphi_s}   & \Omega^*([0,1];\U(\g))\ar[d]^-{\operatorname{sym}(\varphi_s)} \\ C^*_\infty([0,1];\g)\ar@{^(->}[r] & C^*_\infty([0,1];\U(\g))   }    \]  
where the horizontal arrows are the strict inclusions and the right vertical arrow is the symmetrization of the $A_\infty$ morphism studied in the previous subsection. For convenience, let us define maps
\begin{eqnarray*}
 \sM_{n}: \bigodot^n(\Omega^0([0,1];\g) &\to& \Omega^0([0,1];\g) 
 \end{eqnarray*}
 by setting
$\sM_{n}(l_1(t)\odot \cdots \odot l_n(t))(s) \bs dt := \varphi_{s,n}(\bs l_1(t)dt \odot \cdots \odot \bs l_n(t)dt)$.

\begin{theorem}\label{theorem: L-infty morphism}
\begin{enumerate}
\item
 The maps $(\sM_{n})_{n\ge 1}$ are given by 
\begin{eqnarray*}
&& \hspace{-1cm}\sM_{n}(l_1(t)\odot \cdots \odot  l_n(t))(s) =\\
&& \hspace{-1.5cm}\int\limits_{0\le t_1\le \cdots \le t_n\le s}\left(\frac{1}{n^2}\sum_{\sigma,\tau \in S_n}\varepsilon(\tau)\frac{(-1)^{d_\sigma}}{{n-1 \choose d_\sigma}} [l_{\tau(1)}(t_{\sigma(1)}),\cdots,[l_{\tau(n-1)}(t_{\sigma(n-1)}),l_{\tau(n)}(t_{\sigma(n)})]\cdots ]\right)dt_1\cdots dt_n ,
\end{eqnarray*}
where $\varepsilon(\tau)$ is the Koszul sign associated to $\tau$,
i.e. the sign given by  $ l_1(t)\odot\cdots\odot l_n(t)=\varepsilon(\tau) l_{\tau(1)}(t)\odot\cdots\odot l_{\tau(n)}(t)$. 
\item Equivalently, we may define the maps $(\sM_n)$ recursively by putting $\sM_{1}(l_1(t))(s)=\int_0^s l_1(t_1)dt_1$ for $n=1$,  and for $n>1$
\begin{eqnarray*}
&& \hspace{-0.5cm}\sM_{n}(l_1(t)\odot\cdots\odot l_n(t))(s)= \\
&&  =\sum_{k=1}^{n-1}(-1)^k\frac{B_k}{k!}\sum_{i_1+\cdots+i_k=n-1}\sum_{\sigma\in S_n}\varepsilon(\sigma)\int_0^s\left[\sM_{i_1}(\cdots)(t_n),\cdots\left[\sM_{i_k}(\cdots)(t_n),l_{\sigma(n)}(t_n)\right]\cdots\right]dt_n,
\end{eqnarray*}
where the suspension points inside $\sM_{i_1}(\cdots),\ldots,\sM_{i_k}(\cdots)$ have to be filled by the arguments in the order $l_{\sigma(1)}(t),\ldots,l_{\sigma(n-1)}(t)$.
\end{enumerate}
\end{theorem}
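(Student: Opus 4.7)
The strategy is to deduce both formulas from the corresponding associative expressions for $\varphi_s$ established in Theorems~\ref{theorem: Eulerian} and~\ref{theorem: recursion}, exploiting the compatibility of $\varphi_s$ with scalar extension along $\g \hookrightarrow \U(\g)$ encoded by the commutative diagram of $L_\infty$ morphisms above. Concretely, the $L_\infty$ Taylor coefficient $\sM_n$ on a symmetric tensor $l_1 \odot \cdots \odot l_n$ is obtained from the associative Taylor coefficient $\nu_n$ on the corresponding $\U(\g)$-valued forms by summing over input-permutations $\tau \in S_n$ with Koszul signs (and the appropriate normalization to match linear parts).

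For part (1), substituting the explicit formula of Theorem~\ref{theorem: Eulerian} produces a double sum over a time-permutation $\sigma$ and an input-permutation $\tau$ of ordered associative monomials $l_{\tau(1)}(t_{\sigma(1)}) \cdots l_{\tau(n)}(t_{\sigma(n)})$ in $\U(\g)$. For each fixed $\tau$, set $w_k := l_{\tau(k)}(t_k)$; then the sum over $\sigma$ weighted by $\frac{(-1)^{d_\sigma}}{\binom{n-1}{d_\sigma}}$ is exactly the nested-commutator presentation of $n^2\, E$ provided by formula~\eqref{equation:E lie}, where $E$ is the first Eulerian idempotent. Since $E$ projects the tensor algebra onto the free Lie subalgebra, the result lies in $\g$ and can be interpreted intrinsically as an iterated Lie bracket, which combined with the $\frac{1}{n}$ prefactor from Theorem~\ref{theorem: Eulerian} yields the desired formula with coefficient $\frac{1}{n^2}$.

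For part (2), the cleanest route is to rerun the proof of Theorem~\ref{theorem: recursion} directly in the $L_\infty$ setting: differentiate $\sM_n(l_1 \odot \cdots \odot l_n)(s)$ with respect to $s$, produce a one-parameter family $Y_s$ in the cofree cocommutative coalgebra $\overline{S}(\bs\Omega^*([0,1];\g))$ satisfying $\frac{d}{ds}\beta_s^* X = Q(Y_s)$ where $Q$ is the codifferential encoding the dg Lie structure on $\Omega^*([0,1];\g)$, and then apply the $L_\infty$ morphism equation $M \varphi_s = \varphi_s Q$, where $M$ encodes the induced $L_\infty$ structure on $C^*_\infty([0,1];\g)$. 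By Theorem~\ref{theorem: induced structure}, the Taylor coefficients of $M$ contribute exactly the Bernoulli factors $B_k/k!$ together with iterated Lie brackets, yielding the stated recursion with the $(-1)^k B_k/k!$ coefficients. Alternatively, part (2) can be derived from the associative recursion of Definition~\ref{definition: IN formula} by symmetrizing over $\tau$ and invoking the standard identity that transforms the alternating sum $\sum_{j=0}^k (-1)^j \binom{k}{j} X_1\cdots X_j\cdot a\cdot X_{j+1}\cdots X_k$ into a nested commutator after the symmetrization over the $X_\ell$ implemented by the sum over $\tau$.

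The principal obstacle is the bookkeeping of conventions: fixing the precise symmetrization normalization and Koszul signs for scalar extension of a $C_\infty$ morphism along $\g \hookrightarrow \U(\g)$, and carefully disentangling the roles of $E$ and its adjoint $E^*$ (the latter appears in Proposition~\ref{theorem: C_oo}, but the rewriting into Lie brackets is governed by~\eqref{equation:E lie} after the extra summation over $\tau$). Once these conventions are pinned down, both parts reduce to straightforward substitution and reorganization.
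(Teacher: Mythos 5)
Your proposal is correct and follows essentially the same route as the paper: part (1) by symmetrizing Theorem \ref{theorem: Eulerian} over the inputs with Koszul signs, recognizing the first Eulerian idempotent $E$ (viewed as the projection $\mathcal{U}(\g)\to\g$) in the integrand and converting it to nested brackets via \eqref{equation:E lie}, and part (2) by symmetrizing the recursion of Definition \ref{definition: IN formula}/Theorem \ref{theorem: recursion} using precisely the commutator-expansion identity the paper records. Your alternative route for part (2) (rerunning the proof of Theorem \ref{theorem: recursion} directly in the $L_\infty$ setting) is not what the paper does but is equally viable; only note that in part (1) the $\sigma$-sum produced by Theorem \ref{theorem: Eulerian} is $n\,E$ applied to the associative word, and it becomes the nested-commutator sum with the $1/n^2$ prefactor only after invoking $E=\gamma\circ E$, i.e. formula \eqref{equation:E lie}.
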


\begin{proof} The first explicit presentation follows by symmetrizing the formulas for the $A_\infty$-morphism $\varphi_s:\Omega^*([0,1];\mathcal{U}(\g))\to C^*_\infty([0,1];\mathcal{U}(\g))$ coming from Theorem \ref{theorem: Eulerian}, where now the arguments $l_i(t)dt$ are elements in $\Omega^1([0,1];\g^0)\subset \Omega^1([0,1];\mathcal{U}^0(\g))$: we see that $\varphi_{s,n}(l_1(t)dt\odot\cdots\odot l_n(t)dt)$ is the integral over the $n$'th simplex of the image of \[\sum_{\tau\in S_n}\varepsilon(\tau)l_{\tau(1)}(t_1)\cdots l_{\tau(n)}(t_n)dt_1\cdots dt_n\in\Omega^n([0,1]^{\times n};\mathcal{U}^0(\g))\] under the Eulerian projector $E:\mathcal{U}^0(\g)\to\g^0$. We recall, compare \cite{Reutenauer}, that the latter may also be understood as the composition $E=p\circ \operatorname{PBW}^{-1}$ of the inverse of the Poincar\'e-Birkhoff-Witt isomorphism $\operatorname{PBW}:S(\g)\to\mathcal{U}(\g)$ and the natural projection $p:S(\g)\to\g$. Finally, we get the desired formula for $\sM_n$ by composing $E$ with the Dynkin idempotent, as we did in formula \eqref{equation:E lie} on page \pageref{equation:E lie}.
		
The claimed recursive presentation for the maps $\sM_n$ is precisely the one we get, after symmetrization, from the corresponding one in the $A_\infty$ case coming from Definition \ref{definition: IN formula}, as it follows by straightforward computations, keeping in mind the formula
	\begin{multline*} \sum_{\sigma\in S_k}\varepsilon(\sigma) [ x_{\sigma(1)},[\cdots[x_{\sigma(k)}, y]\cdots] ]= \\ =\sum_{\sigma\in\ S_k} \varepsilon(\sigma) \sum_{j=0}^{k}(-1)^{k-j+\sum_{h>j}|y||x_h|}\binom{n}{j}x_{\sigma(1)}\cdots x_{\sigma(j)}yx_{\sigma(j+1)}\cdots x_{\sigma(k)}, \end{multline*}
valid in any associative graded algebra. Thus, the second claim follows by Theorem \ref{theorem: recursion}.

\end{proof}

\begin{remark} The first few instances of the previous recursion are
\[\sM_{1}(l_1(t))(s)=\int_0^s l_1(t_1)dt_1,\]
\[\sM_{2}(l_1(t)\odot l_2(t))(s)=\sum_{\sigma\in S_2}\varepsilon(\sigma)\frac{1}{2}\int_0^s\left[\int_0^{t_2}l_{\sigma(1)}(t_1)dt_1,l_{\sigma(2)}(t_2)\right]dt_2,\]
\begin{multline*}	\sM_{3}(l_1(t)\odot l_2(t)\odot l_3(t))(s)=\sum_{\sigma\in S_3}\varepsilon(\sigma) \frac{1}{4}\int_0^s \left[\int_0^{t_3}\left[\int_0^{t_2} l_{\sigma(1)}(t_1)dt_1, l_{\sigma(2)}(t_2)\right]dt_2,l_{\sigma(3)}(t_3)\right]dt_3+\\
+\sum_{\sigma\in S_3}\varepsilon(\sigma)\frac{1}{12}\int_0^s \left[ \int_0^{t_3}l_{\sigma(1)}(t_1)dt_1,\left[\int_0^{t_3}l_{\sigma(2)}(t_2)dt_2,l_{\sigma(3)}(t_3)\right]\right]dt_3.
\end{multline*}

For the pushforward, we find the following general recursion, where for simplicity we put $\sM_n(s)=\sM_n(l(t)^{\odot n})(s)$, $\sM_\infty(s)=\sum_{n\geq1}\frac{1}{n!}\sM_n(s)$,
\begin{eqnarray*}
	&& (\varphi_{s})_*(l(t)dt) = \sM_\infty(s)\bs dt = \sum_{n\ge 1}\frac{1}{n!} \sM_{n}(s)\bs dt = \\
	&& \hspace{0.5cm} = \left(\int_0^s l(t_1)dt_1 + \sum_{n\ge 2}\sum_{k=1}^{n-1}(-1)^k\frac{B_k}{k!}\sum_{i_1+\cdots+i_k=n-1}\int_0^s\left[\sM_{i_1}(t_n),\cdots\left[\sM_{i_k}(t_n),l(t_n)\right]\cdots\right]dt_n\right)\bs dt.
\end{eqnarray*}
Modulo the switch from $[\cdot,\cdot]$ to the opposite bracket $[x,y]^{\mathrm{op}}:=[y,x]$, this is precisely the recursive expansion given by Magnus, see \cite{Magnus,Iserles-Norsett}, for the solution of the differential equation $\frac{d}{ds}e^{\sM_\infty(s)}=e^{\sM_\infty(s)}l(s)$ in the enveloping algebra $\mathcal{U}(\g^0)$, compare with Proposition \ref{proposition: ODE1}.
By the previous theorem, we also find
\begin{equation*}
 \sM_\infty(s)
	=\sum_{n\ge 1} \int\limits_{0\le t_1\le \cdots \le t_n\le s}\left(\frac{1}{n^2}\sum_{\sigma \in S_n}\frac{(-1)^{d_\sigma}}{{n-1 \choose d_\sigma}} [l(t_{\sigma(1)}),\cdots,[l(t_{\sigma(n-1)}),l(t_{\sigma(n)})]\cdots ]\right)dt_1\cdots dt_n.
\end{equation*}
This formula for the Magnus expansion was found by Mielnik and Plaba\'nski \cite{Mielnik-Plebanski}.

\end{remark}

\appendix

\section{Review of $A_\infty$ and $L_\infty$ algebras}\label{appendix: A-infty and L-infty}

We briefly describe our terminology and notations concerning $A_\infty$
and $L_\infty$ algebras. In the next section we shall review in more detail
some results concerning $C_\infty$ algebras.

\begin{itemize}
	\item The suspension endofunctor $\bs$ maps a graded vector space $V$
	to its suspension $\bs V$, whose component $(\bs V)^i$ in degree $i\in \mathbb{Z}$ is $V^{i+1}$.
	\item $\overline{T}(V)=\bigoplus_{n\ge 1}T^n(V)$ denotes the reduced tensor coalgebra on a graded vector space, with the deconcatenation coproduct $\overline{\Delta}:\overline{T}(V)\to \overline{T}(V)\otimes\overline{T}(V)$,
	$$\overline{\Delta}(x_1\otimes \cdots \otimes x_n) = \sum_{i=1}^{n-1}(x_1\otimes \cdots \otimes x_i)\otimes (x_{i+1}\otimes \cdots \otimes x_n).$$
	It is the cofree object over $V$ in the category of coassociative, locally conilpotent (i.e., the union of the kernels of the iterated coproducts is exhaustive) graded coalgebras.
	\item We denote by $S_n$ the $n$'th symmetric group. Given an integer $n\ge1$ and an ordered partition $i_1+\cdots+i_k=n$, we denote by $S(i_1,\ldots,i_k)\subset S_n$ the set of \emph{$(i_1,\ldots,i_k)$-unshuffles}, i.e., permutations $\sigma\in S_n$ such that $\sigma(i)<\sigma(i+1)$ for $i\neq i_1, i_1+i_2,\ldots, i_1+\cdots+ i_{k-1}$.
	\item The symmetric group $S_n$ acts on $T^n(V)$ by $\sigma(x_1\otimes\cdots\otimes x_n)=\varepsilon(\sigma)x_{\sigma(1)}\otimes\cdots\otimes x_{\sigma(n)}$, where $\varepsilon(\sigma)=\varepsilon(\sigma;x_1,\ldots,x_n)$ is the usual \emph{Koszul sign}. 
	We denote the space of coinvariants either by $S^n(V)$ or by $\bigodot^n(V)$, and by $x_1\odot\cdots\odot x_n$ the image of $x_1\otimes\cdots\otimes x_n$ under the natural projection $T^n(V)\to S^n(V)$. The reduced symmetric coalgebra over $V$ is the space $S(V)=\bigoplus_{n\ge 1}S^n(V)$, with the unshuffle coproduct
	$$\overline{\Delta}(x_1\odot \cdots \odot x_n) = \sum_{i=1}^{n-1}\sum_{\sigma\in S(i,n-i)}\varepsilon(\sigma)(x_{\sigma(1)}\odot \cdots \odot x_{\sigma(i)})\otimes (x_{\sigma(i+1)}\odot \cdots \odot x_{\sigma(n)}).$$
	This is the cofree, coassociative, cocommutative and locally conilpotent graded coalgebra over $V$.
	\item 
	Let $(C,\Delta)$ be a graded coalgebra.
	A map $Q: (C,\Delta)\to (C,\Delta)$ of degree $1$ is a codifferential if $Q\circ Q = 0$ and $\Delta \circ Q =(Q\otimes \mathrm{id} + \mathrm{id}\otimes Q)\circ \Delta$ hold true.
	\item An $A_\infty$ algebra structure on a graded vector space $V$
	is a codifferential $Q$ of the graded coalgebra $(\overline{T}(\bs V),\overline{\Delta})$.
	Similarly, an $L_\infty$ algebra structure on $V$ is a codifferential $Q$
	of the graded coalgebra $(\overline{S}(\bs V),\overline{\Delta})$.
	\item A morphism of $A_\infty$ algebras from $A_\infty$ algebra $V$ to $A_\infty$ algebra $W$ is a morphism of the corresponding
	dg coalgebras $F: (\overline{T}(\bs V),\overline{\Delta},Q_V) \to (\overline{T}(\bs W),\overline{\Delta},Q_W)$.
	In the same manner one defines morphisms of $L_\infty$ algebras.
	\item An $A_\infty$ algebra structure $Q$ on $V$ is determined by its Taylor coefficients $(Q_n)_{n\ge 1}$, which are the maps given by
	$$ 
	\xymatrix{
		T^n(\bs V) \ar[r] & \overline{T}(\bs V) \ar[r]^Q & \overline{T}(\bs V) \ar[r]^(0.4){p}& T^1(\bs V) \cong \bs V.
	}$$
	Moreover, a morphism $F$ of $A_\infty$ algebras from $V$ to $W$
	is determined by its Taylor coefficients
	$F_n: T^n(\bs V) \to \bs W$, which are defined in the same manner as
	the Taylor coefficients of an $A_\infty$ algebra structure.
	\item Similarly, an $L_\infty$ algebra structure $Q$ on $V$
	is determined by its Taylor coefficients
	$Q_n: \bigodot^n(\bs V) \to \bs V$, for $n\ge 1$, and a $L_\infty$ algebra morphism $F$ from $V$ to $W$ is determined by its Taylor coefficients
	$F_n: \bigodot^n(\bs V) \to \bs W$.
	\item A morphism of $A_\infty$ algebras, respectively $L_\infty$ algebras,
	is called a quasi-isomorphism if its first Taylor coefficient induces
	an isomorphism on cohomology.
	\item The category of dg algebras embeds into the category of $A_\infty$ algebra via the embedding
	$$ (A,\cdot,d) \mapsto (\overline{T}(\bs A),Q),$$
	where $Q$ is the coderivation whose non-trivial Taylor coefficients are
	$Q_1(\bs a) = - \bs (da)$ and $Q_2(\bs a\otimes \bs b)=(-1)^{|a|}\bs (a \cdot b)$.
	Similar formulas define an embedding of the category of dg Lie algebras
	into the category of $L_\infty$ algebras.
	\item The forgetful functor from dg associative algebras to dg Lie algebras admits the following higher generalization. Given a graded vector space $V$, we denote by $\operatorname{sym}_n$, $n\geq1$,  the maps \[ \operatorname{sym}_n:S^n(\bs V)\to T^n(\bs V),\quad\bs x_1\odot\cdots\odot \bs x_n\mapsto\sum_{\sigma\in S_n}\varepsilon(\sigma)\bs x_{\sigma(1)}\otimes\cdots\otimes \bs x_{\sigma(n)}.\] If $Q_n:T^n(\bs V)\to\bs V$, $n\geq1$, are the Taylor coefficients of an $A_\infty$ algebra structure on $V$, then the $Q_n\circ\operatorname{sym}_n:S^n(\bs V)\to\bs V$ are the Taylor coefficients of an $L_\infty$ algebra structure $\operatorname{sym}(Q)$ on $V$. Similarly, if $F_n:T^n(\bs V)\to\bs W$ are the Taylor coefficients of an $A_\infty$ morphism $F:(V,Q_V)\to (W,Q_W)$, then $F_n\circ\operatorname{sym}_n:S^n(\bs V)\to\bs W$ are the Taylor coefficients of an $L_\infty$ morphism $\operatorname{sym}(F):(V,\operatorname{sym}(Q_V))\to(W,\operatorname{sym}(Q_W))$. This defines the \emph{symmetrization functor} from the category of $A_\infty$ algebras to the one of $L_\infty$ algebras.
\end{itemize}

\section{Review of $C_\infty$ algebras}\label{appendix: C-infty}

$C_\infty$ algebra structures are $A_\infty$ algebra structures which are compatible with the shuffle product on the reduced tensor coalgebra. To be precise,
the reduced tensor coalgebra $(\overline{T}(V),\overline{\Delta})$
can be equipped with the structure of a graded bialgebra by introducing the shuffle product
$$ (v_1\otimes \cdots \otimes v_p)\circledast (v_{p+1}\otimes \cdots \otimes v_n) = \sum_{\sigma \in S(p,q)}\varepsilon(\sigma)v_{\sigma^{-1}(1)}\otimes \cdots \otimes v_{\sigma^{-1}(n)},$$
where $S(p,q)$ is the set of $(p,q)$-unshuffles, i.e. a
permutation $\sigma$ of $\{1,\dots,n\}$ such that
$\sigma(i)<\sigma(i+1)$ for all $i\neq p$.

\begin{definition} A $C_\infty$ algebra structure on a graded space $V$ is a dg bialgebra structure $Q:\overline{T}(\bs V)\to \overline{T}(\bs V)$ on the graded bialgebra $(\overline{T}(\bs V),\overline{\Delta},\circledast)$. A $C_\infty$ morphism $F:V\to W$ between $C_\infty$ algebras $V$ and $W$ is a morphism of dg bialgebras $F:\overline{T}(\bs V)\to\overline{T}(\bs W)$.
\end{definition}

Let $(C,\Delta_C,m_C)$ and $(D,\Delta_D,m_D)$ be graded bialgebras with coproducts $\Delta_C$, $\Delta_D$ and products $m_C$, $m_D$ respectively. Recall that given a morphism $F:(C,\Delta_C)\to(D,\Delta_D)$ of graded coalgebras, a linear map $R:C\to D$ is an $F$-coderivation if it satisfies the identity $\Delta_DR=(R\otimes F+F\otimes R)\Delta_C$. Similarly, given a morphism of graded algebras $F:(C,m_C)\to(D,m_D)$, a linear map $R:C\to D$ is an $F$-derivation if it satisfies the identity $Rm_C=m_D(R\otimes F+F\otimes R)$. Finally, given a morphism of graded bialgebras $F:(C,\Delta_C,m_C)\to(D,\Delta_D,m_D)$, a linear map $R:C\to D$ is an $F$-biderivation if it is both an $F$-coderivation and an $F$-derivation. When $F=\operatorname{id}_C$ we recover the usual definition of a (resp.: co, bi)derivation on $C$. The proof of the following lemma is a straightforward verification.
\begin{lemma}\label{lem:F-derivation} Given a morphism of (resp.: co, bi)algebras $F:C\to D$ and (resp.: co, bi)derivations $Q:C\to C$, $Q':D\to D$, then the maps $FQ,Q'F:C\to D$ are $F$-(resp.: co, bi)derivations.
\end{lemma}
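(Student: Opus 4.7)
The lemma is a routine compatibility check between morphisms and (co)derivations, but it is worth organizing the verification cleanly. My plan is to treat the three cases (algebras, coalgebras, bialgebras) in a uniform way: the bialgebra case follows formally from the algebra and coalgebra cases, since the $F$-biderivation condition is, by definition, the conjunction of the $F$-derivation and $F$-coderivation conditions. So the real content is the first two cases, which are strictly dual to each other; it suffices to treat the algebra case, and then invoke duality.

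For the algebra case, I would verify directly that both $FQ$ and $Q'F$ satisfy the $F$-derivation identity $R m_C = m_D(R \otimes F + F \otimes R)$. For $R = FQ$, I would compute
\begin{align*}
FQ \circ m_C &= F \circ (Q \circ m_C) = F \circ m_C \circ (Q \otimes \id_C + \id_C \otimes Q) \\
&= m_D \circ (F \otimes F) \circ (Q \otimes \id_C + \id_C \otimes Q) = m_D \circ (FQ \otimes F + F \otimes FQ),
\end{align*}
using in the second equality that $Q$ is a derivation on $C$, and in the third that $F$ is a morphism of algebras. For $R = Q'F$, I would analogously write
\begin{align*}
Q'F \circ m_C &= Q' \circ m_D \circ (F \otimes F) = m_D \circ (Q' \otimes \id_D + \id_D \otimes Q') \circ (F \otimes F) \\
&= m_D \circ (Q'F \otimes F + F \otimes Q'F),
\end{align*}
using that $F$ is a morphism of algebras and then that $Q'$ is a derivation on $D$.

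The coalgebra case is obtained from the algebra case by reversing all arrows: one verifies the dual identity $\Delta_D \circ R = (R \otimes F + F \otimes R) \circ \Delta_C$ for $R = FQ$ and $R = Q'F$ by the same two-step computation, substituting the comultiplicative compatibilities $\Delta_C \circ Q = (Q \otimes \id_C + \id_C \otimes Q) \circ \Delta_C$, $\Delta_D \circ Q' = (Q' \otimes \id_D + \id_D \otimes Q') \circ \Delta_D$, and $\Delta_D \circ F = (F \otimes F) \circ \Delta_C$. Finally, the bialgebra case is immediate: if $F$ is a bialgebra morphism and $Q,Q'$ are biderivations, then $FQ$ and $Q'F$ are simultaneously $F$-derivations (by the algebra part of the argument applied to the algebra structures on $C,D$ and the derivation parts of $Q,Q'$) and $F$-coderivations (by the coalgebra part of the argument), hence $F$-biderivations.

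There is no real obstacle here — the statement is a direct consequence of the definitions plus the functoriality of tensor products. The only point requiring a bit of care is keeping track of Koszul signs when applying $F \otimes F$ to $Q \otimes \id_C + \id_C \otimes Q$ etc., since $Q$ and $F$ have nonzero degrees in general; but the signs match precisely because the $F$-derivation/coderivation identities are themselves formulated using the same sign convention.
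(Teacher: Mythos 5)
Your verification is correct, and it is exactly the ``straightforward verification'' that the paper omits: the algebra case follows from $F\circ m_C = m_D\circ(F\otimes F)$ together with the (co)derivation identities, the coalgebra case is the dual computation, and the bialgebra case is the conjunction of the two. Your remark on Koszul signs is also fine, since $F$ has degree $0$ here, so no signs actually appear when commuting it past $Q$ or $Q'$ in the tensor factors.
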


We say that a graded coalgebra $(C,\Delta_C)$ is locally conilpotent if $C=\bigcup_{n\geq1}\operatorname{ker}(\Delta_C^n)$, where $\Delta_C^n:C\to C^{\otimes n+1}$ is the iterated coproduct. Recall that $(\overline{T}(V),\overline{\Delta})$ is the cofree locally conilpotent graded coalgebra over $V$: in particular, if $C$ is locally conilpotent every morphism of graded coalgebras $F:C\to\overline{T}(V)$ (resp.: every $F$-coderivation $R:C\to\overline{T}(V)$) is determined by its corestriction $pF:C\to V$ (resp.: $pR:C\to V$), where we denote by $p:\overline{T}(V)\to V$ the natural projection. This applies to $C=\overline{T}(V)\otimes\overline{T}(V)$, equipped with the induced (locally conilpotent) coalgebra structure: in particular, the shuffle product $\circledast:\overline{T}(V)\otimes\overline{T}(V)\to\overline{T}(V)$ is the only morphism of graded coalgebras with vanishing corestriction $0=p\circledast:\overline{T}(V)\otimes\overline{T}(V)\xrightarrow{\circledast}\overline{T}(V)\xrightarrow{p} V$.

\begin{lemma}\label{lemma: B3} A coderivation $Q:\overline{T}(V)\to\overline{T}(V)$ of a reduced tensor coalgebra is also a derivation with respect to the shuffle product $\circledast$ if and only if its Taylor coefficients $Q_n:V^{\otimes n}\to V$ vanish on the image of $\circledast$.\end{lemma}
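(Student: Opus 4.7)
The plan is to invoke the cofreeness property for $F$-coderivations into $\overline{T}(V)$ recalled in the paragraph preceding Lemma \ref{lemma: B3}, together with the elementary observation that
\[ p\circ\circledast=0\colon\overline{T}(V)\otimes\overline{T}(V)\to V, \]
since shuffling two nonempty words always produces words of length at least two, whereas $p$ extracts the length-one component.

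For the direction $(\Rightarrow)$, I would assume $Q$ is a $\circledast$-derivation and apply $p$ to the identity
\[ Q(a\circledast b)=Q(a)\circledast b+(-1)^{|a|}a\circledast Q(b) \]
for $a\in V^{\otimes k}$ and $b\in V^{\otimes l}$ with $k,l\geq 1$. The left-hand side projects to $Q_{k+l}(a\circledast b)$, while both terms on the right are images of $\circledast$ on pairs of positive-length tensors and hence vanish under $p$. This yields the desired vanishing of $Q_n$ on the image of $\circledast$.

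For $(\Leftarrow)$, I would package the derivation defect into a single map
\[ R:=Q\circ\circledast-\circledast\circ(Q\otimes\id+\id\otimes Q)\colon\overline{T}(V)\otimes\overline{T}(V)\to\overline{T}(V) \]
and aim to show that $R=0$. Since $\circledast$ is a morphism of graded coalgebras by the bialgebra axioms, and since $Q$ and $Q\otimes\id+\id\otimes Q$ are coderivations on target and source respectively, Lemma \ref{lem:F-derivation} applied twice identifies both summands of $R$ as $\circledast$-coderivations; hence so is $R$. The source $\overline{T}(V)\otimes\overline{T}(V)$ is locally conilpotent, so by the cofreeness property for $\circledast$-coderivations $R$ is determined by its corestriction $pR$. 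Evaluating on an elementary $x\otimes y$ with $x\in V^{\otimes k}$, $y\in V^{\otimes l}$, the two shuffle-type terms of $pR(x\otimes y)$ vanish by $p\circ\circledast=0$, while the remaining term $pQ(x\circledast y)=Q_{k+l}(x\circledast y)$ vanishes by hypothesis. Thus $pR=0$, hence $R=0$, and $Q$ is a $\circledast$-derivation.

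The main conceptual step is the recognition that $R$ is a $\circledast$-coderivation, which is exactly what makes the cofreeness property applicable; once this is in place, the argument reduces to the trivial observation $p\circ\circledast=0$ together with the hypothesis on the Taylor coefficients $Q_n$.
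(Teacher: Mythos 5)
Your proof is correct and follows essentially the same route as the paper: both sides of the derivation identity are $\circledast$-coderivations by Lemma \ref{lem:F-derivation}, so by cofreeness they agree if and only if their corestrictions agree, which reduces to $p\circ\circledast=0$ and the vanishing of the $Q_n$ on the image of the shuffle product. The only cosmetic difference is that you split the two implications (handling $(\Rightarrow)$ by directly applying $p$ to the derivation identity), whereas the paper obtains both directions at once from the same corestriction equivalence.
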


\begin{proof} We have to show $Q\circledast=\circledast(Q\otimes\operatorname{id}+\operatorname{id}\otimes Q)$. Since both the left and the right hand side are $\circledast$-coderivations by Lemma \ref{lem:F-derivation}, it suffices to show that they have the same corestriction: as $p\circledast=0$, this happens if and only if the composition $\overline{T}(V)\otimes\overline{T}(V)\xrightarrow{\circledast}\overline{T}(V)\xrightarrow{Q}\overline{T}(V)\xrightarrow{p} V$ also vanishes.\end{proof}
\begin{lemma}\label{lemma: B4} A morphism of graded coalgebras $F:(\overline{T}(V),\overline{\Delta})\to(\overline{T}(W),\overline{\Delta})$ is also a morphism of graded bialgebras if and only if its Taylor coefficients $F_n:V^{\otimes n}\to W$ vanish on the image of the shuffle product.
\end{lemma}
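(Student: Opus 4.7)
The plan is to mimic the proof strategy of Lemma~\ref{lemma: B3}, i.e.\ to rewrite the condition ``$F$ respects the shuffle product'' as an equality of maps into the cofree coalgebra $\overline{T}(W)$, and then invoke the universal property to reduce it to an equality after corestricting to $W$. By definition, $F$ is a morphism of graded bialgebras if and only if, in addition to being a coalgebra morphism, it satisfies
\[
F \circ \circledast \;=\; \circledast \circ (F\otimes F)
\]
as maps $\overline{T}(V)\otimes\overline{T}(V)\to\overline{T}(W)$.

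First I would observe that both sides of this equality are morphisms of graded coalgebras, where $\overline{T}(V)\otimes\overline{T}(V)$ carries the tensor-product coalgebra structure. Indeed, on the left $\circledast:\overline{T}(V)\otimes\overline{T}(V)\to\overline{T}(V)$ is a coalgebra morphism (because $\overline{T}(V)$ is a graded bialgebra) and is then post-composed with the coalgebra morphism $F$; on the right, $F\otimes F$ is a coalgebra morphism followed by the coalgebra morphism $\circledast:\overline{T}(W)\otimes\overline{T}(W)\to\overline{T}(W)$ provided by the bialgebra structure on $\overline{T}(W)$.

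Next, the domain $\overline{T}(V)\otimes\overline{T}(V)$ is locally conilpotent, so by cofreeness of $\overline{T}(W)$ each side is uniquely determined by its corestriction to $W$, obtained by composing with the canonical projection $p:\overline{T}(W)\to W$. Hence $F$ is a bialgebra morphism if and only if
\[
p\circ F\circ\circledast \;=\; p\circ\circledast\circ (F\otimes F).
\]
The right-hand side vanishes identically, since by definition the shuffle product lands in $\bigoplus_{n\ge 2}W^{\otimes n}\subset\ker p$. The left-hand side equals $(pF)\circ\circledast$, and $pF:\overline{T}(V)\to W$ is precisely the direct sum $\bigoplus_{n\ge 1}F_n$ of the Taylor coefficients of $F$. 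Therefore the identity above is equivalent to the statement that each $F_n$ vanishes on the image of $\circledast$, which is the desired conclusion.

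I do not foresee any genuine obstacle: the proof is essentially formal, and the only subtlety is the same one already exploited in Lemma~\ref{lemma: B3}, namely ensuring that both objects being compared are coalgebra morphisms into $\overline{T}(W)$ so that the universal property can be applied. The verification that $\circledast$ itself is a coalgebra morphism (equivalent to the bialgebra axioms for the shuffle--deconcatenation structure) is standard and can be cited without further comment.
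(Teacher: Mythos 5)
Your proposal is correct and follows essentially the same route as the paper: rewrite compatibility with $\circledast$ as the equality of the two coalgebra morphisms $F\circledast$ and $\circledast(F\otimes F)$ out of the locally conilpotent coalgebra $\overline{T}(V)\otimes\overline{T}(V)$, reduce to corestrictions via cofreeness of $\overline{T}(W)$, and use $p\circ\circledast=0$ to identify the condition with the vanishing of the Taylor coefficients $F_n$ on the image of the shuffle product. The only difference is that you spell out explicitly why both composites are coalgebra morphisms, which the paper takes for granted.
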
\begin{proof} As for the previous lemma, the two morphisms of graded locally conilpotent coalgebras $\circledast(F\otimes F), F\circledast:\overline{T}(V)\otimes\overline{T}(V)\to\overline{T}(W)$ coincide if and only if they have the same corestriction if and only if the composition $\overline{T}(V)\otimes\overline{T}(V)\xrightarrow{\circledast}\overline{T}(V)\xrightarrow{F}\overline{T}(W)\xrightarrow{p} W$ vanishes. 
\end{proof}
Given an $A_\infty$ algebra $V$, whose Taylor coefficients are $(Q_i)_{i\ge 1}$, and contraction data $$(\xymatrix{\bs W\ar@<2pt>[r]^-{F_1}& \bs V\ar@<2pt>[l]^-{G_1}},K).$$
The usual $A_\infty$ homotopy transfer theorem -- see \cite{Huebschmann-Kadeishvili,Markl} -- tells us that the maps $R_n:\bs W^{\otimes n}\to \bs W$ (where $R_1$ is the differential on $\bs W$) and $F_n:\bs W^{\otimes n}\to \bs V$, defined recursively by 
\begin{equation}\label{eq:homotopytransfer} R_n=\sum_{i=2}^nG_1Q_iF^i_n,\qquad F_n=\sum_{i=2}^nK Q_iF^i_n,\end{equation}
where 
$ F_n^i=\sum_{j_1+\cdots j_i=n}F_{j_1}\otimes\cdots\otimes F_{j_i}:\bs W^{\otimes n}\to \bs V^{\otimes i}$, are respectively the Taylor coefficients of an $A_\infty$ algebra structure on $W$ and an $A_\infty$ quasi-isomorphism $F:W\to V$.

\begin{theorem}\label{th:homotopytransfer} In the above hypotheses, if $V$ is a $C_\infty$ algebra then $R_n$, $F_n$ as in formula \eqref{eq:homotopytransfer} are the Taylor coefficients of a $C_\infty$ algebra structure on $\bs W$ and a $C_\infty$ quasi-isomorphism respectively.
\end{theorem}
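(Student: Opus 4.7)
The plan is to argue by induction on $n \ge 2$ that both $R_n$ and $F_n$ vanish on the image of the shuffle product $\circledast:\bs W^{\otimes p}\otimes \bs W^{\otimes q}\to \bs W^{\otimes n}$ with $p,q\ge 1$ and $p+q=n$. By Lemmas \ref{lemma: B3} and \ref{lemma: B4}, this is exactly what it means for the $(R_n)_{n\ge 1}$ to endow $W$ with a $C_\infty$ structure and for $F$ to be a $C_\infty$ morphism; the fact that $F$ remains a quasi-isomorphism is automatic from the $A_\infty$ version, since the linear part $F_1$ is unchanged.

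The key reduction is that, for $n\ge 2$, both $R_n = G_1\circ S_n$ and $F_n = K\circ S_n$, where $S_n:=\sum_{i\ge 2}Q_i\circ F^i_n:\bs W^{\otimes n}\to\bs V$. Hence it suffices to show that $S_n$ vanishes on the image of $\circledast$ for each $n\ge 2$. Observe that $F^i_n=\sum_{j_1+\cdots+j_i=n}F_{j_1}\otimes\cdots\otimes F_{j_i}$ for $i\ge 2$ only involves Taylor coefficients $F_{j_\ell}$ with $j_\ell\le n-1$, so the recursion defining $F_n$ does not reference $F_n$ itself and the induction is well-posed.

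For the inductive step, the main idea is to repackage the inductive hypothesis using Lemma \ref{lemma: B4}. Assuming $F_k$ vanishes on shuffles for each $k<n$, I would introduce the truncated coalgebra morphism $\hat F^{<n}:\overline{T}(\bs W)\to\overline{T}(\bs V)$ whose Taylor coefficients equal $F_k$ for $k<n$ and vanish for $k\ge n$. All its Taylor coefficients vanish on shuffles (by inductive hypothesis or trivially), so by Lemma \ref{lemma: B4}, $\hat F^{<n}$ is a morphism of graded bialgebras. Consequently, for $u\in\bs W^{\otimes p}$, $v\in\bs W^{\otimes q}$ with $p,q\ge 1$ and $p+q=n$, one has
\[
\hat F^{<n}(u\circledast v)=\hat F^{<n}(u)\circledast \hat F^{<n}(v),
\]
an element of $\overline{T}(\bs V)$ that lies in the image of the shuffle product. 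Projecting onto $\bs V^{\otimes i}$ for $i\ge 2$ gives precisely $F^i_n(u\circledast v)$, since the $\bs V^{\otimes i}$-component of $\hat F^{<n}$ coincides with $F^i_n$ in these degrees. Because $V$ is a $C_\infty$ algebra, Lemma \ref{lemma: B3} ensures that each $Q_i$ vanishes on the image of $\circledast$ in $\overline{T}(\bs V)$. Summing over $i\ge 2$ yields $S_n(u\circledast v)=0$, which closes the induction.

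The main obstacle I would anticipate lies in the bookkeeping around the truncation: one must verify that $\hat F^{<n}$ is indeed a well-defined coalgebra morphism (which is automatic from the cofree coalgebra property of $\overline{T}(\bs W)$), and that its $i$-th Taylor coefficient agrees with the $F^i_n$ appearing in $S_n$ for $i\ge 2$ (which follows because each composition $j_1+\cdots+j_i=n$ with $i\ge 2$ has all $j_\ell<n$). Once this is set up correctly, the combination of the two lemmas from the appendix with the $C_\infty$ structure on $V$ makes the inductive step essentially formal.
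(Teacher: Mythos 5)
Your argument is correct and essentially coincides with the paper's proof: both proceed by induction on $n$, use Lemma \ref{lemma: B4} to see that the truncated coalgebra morphism $F_{<n}$ is a bialgebra morphism, and then exploit the $C_\infty$ hypothesis on $V$ (via Lemma \ref{lemma: B3}) to show $\sum_{i\ge 2} Q_i F^i_n$ kills shuffles, hence so do $R_n = G_1\circ(\cdot)$ and $F_n = K\circ(\cdot)$. The only cosmetic difference is that the paper packages the last step through the $F_{<n}$-biderivation Lemma \ref{lem:F-derivation} and the vanishing of the projection $p$ on shuffle images, whereas you verify the same vanishing by a direct componentwise computation.
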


This result was established by Cheng and Getzler
\cite{Cheng-Getzler} with a different proof.

\begin{proof} Suppose inductively we have shown that $R_i,F_i$ vanish on the image of the shuffle product for all $i<n$, the induction starting at $n=2$ where it is trivial: then the morphism of graded coalgebras $F_{<n}:\overline{T}(\bs W)\to\overline{T}(\bs V)$ with Taylor coefficients $(F_{<n})_i=F_i$ if $i<n$ and $(F_{<n})_i=0$ if $i\geq n$, is also a morphism of graded bialgebras. Moreover, since $V$ is a $C_\infty$ algebra the coderivation $Q_{\geq2}:\overline{T}(\bs V)\to \overline{T}(\bs V)$ with vanishing linear part $(Q_{\geq2})_1=0$ and the same higher Taylor coefficients as $Q$, $(Q_{\geq2})_i=Q_i$ if $i\geq2$, is also a biderivation. Both statements follow by the previous two lemmas. Finally, $R_n$ and $F_n$ are respectively the composition with $G_1:\bs V\to \bs W$ and $K:\bs V\to \bs V$ of the map
\[ \bs W^{\otimes n} \hookrightarrow\overline{T}(\bs W)\xrightarrow{F_{<n}} \overline{T}(\bs V)\xrightarrow{Q_{\geq2}}\overline{T}(\bs V)\xrightarrow{p} \bs V, \]
and the latter vanishes on the image of the shuffle product: in fact, so does the corestriction map $p:\overline{T}(\bs V)\to\bs V$, and by Lemma \ref{lem:F-derivation} the composition $Q_{\geq2}F_{<n}:\overline{T}(\bs W)\to\overline{T}(\bs V)$ is an $F_{<n}$-biderivation, hence it sends the image of the shuffle product in $\overline{T}(\bs W)$ into the image of the shuffle product in $\overline{T}(\bs V)$.
\end{proof}

\thebibliography{10}

\bibitem{Abad-Schaetz_integration}
C. Arias Abad and F. Sch\"atz,
{\em The $A_\infty$ de Rham theorem and integration of representations up to homotopy},
Int. Math. Res. Not. IMRN {\bf 2013}, no. 16, 3790--3855.

\bibitem{Beherend-Getzler} K. Behrend and E. Getzler, \emph{Geometric higher groupoids and categories}; \texttt{ arXiv:1508.02069v3 [math.AG]}

\bibitem{Lie models} U. Buijs, Y. F\'elix, A. Murillo and D. Tanr\'e, \emph{Lie models of simplicial sets and representability of the Quillen functor}; \texttt{arXiv: 1508.01442 [math.AT]}.

\bibitem{BMS}
F. Burghart, P. Mn\"ev and F. Steinebrunner, {\em private communication}, 2016.

\bibitem{CR}
A. S. Cattaneo and C. A. Rossi,
{\em Wilson surfaces and higher dimensional knot invariants},
Commun. Math. Phys. {\bf 256} (2005), 513--537.

\bibitem{Chen1}
K.T. Chen, 
{\em Iterated integrals of differential forms and loop space homology},
Ann. of Math. (2) {\bf 97} (1973), 217--246.

\bibitem{Cheng-Getzler}
X.Z. Cheng and E. Getzler,
{\em Transferring homotopy commutative algebraic structures},
 J. Pure Appl. Algebra {\bf 212} (11) ( 2008), 2535--2542.

\bibitem{Dupont2}  J. L. Dupont, \emph{Curvature and characteristic classes}, Lecture Notes in Mathematics \textbf{640}, Springer-Verlag, 1978.

\bibitem{Fiorenza-Manetti}
D. Fiorenza and M. Manetti,
{\em $L_\infty$ structures on mapping cones},
Algebra Number Theory {\bf 1} (2007), no. 3, 301--330. 

\bibitem{Getzler-Ham}
E.~Getzler, {\em A Darboux theorem for Hamiltonian operators in the formal calculus of variations}, Duke J. Math. {\bf 111} (2002), 535--560.

\bibitem{Getzler} E.~Getzler, \emph{Lie theory for nilpotent $L_{\infty}$-algebras.}, Ann. of Math. \textbf{170}, no. 1 (2009), 271--301; \texttt{arXiv:math/0404003v4}.

\bibitem{Gugenheim0}
V.K.A.M. Gugenheim, {\em On the multiplicative structure of the de Rham theory},
J. Differential Geometry {\bf 11} (1976), no. 2, 309--314. 

\bibitem{Gugenheim}
V.K.A.M. Gugenheim,
{\em On Chen's iterated integrals},
Illinois J. Math. {\bf 21} (1977), no. 3, 703--715. 

\bibitem{Adams-Hilton} K. Hess, P.-E. Parent, J. Scott and A. Tonks, \emph{A canonical enriched Adams-Hilton model for simplicial sets}, Adv. in Math. \textbf{207} (2006), no. 2, 847--875; \texttt{ arXiv: math/0408216v2 [math.AT]}.

\bibitem{hinichdgC} V. Hinich, \emph{DG coalgebras as formal stacks}, J. of Pure and Appl. Algebra \textbf{162} (2001), 209--250; \texttt{arXiv:math/9812034v1 [math.AG]}.

\bibitem{Huebschmann-Kadeishvili}
J. Huebschmann and T. Kadeishvili,
{\em Small models for chain algebras}, Math. Z. {\bf 207} (1991),
245--280.

\bibitem{Iserles-Norsett}
A. Iserles and S.P. N\o rsett, {\em On the solution of linear differential equations in Lie groups}, Royal Soc. Lond. Philos. Trans. Ser. A Math. Phys. Eng. Sci. {\bf 357} (1999), 983--1020.

\bibitem{Lawrence-Sullivan} R. Lawrence and D. Sullivan, \emph{A free differential Lie algebra for the interval}; available at \url{http://www.ma.huji.ac.il/~ruthel/papers/06bernoulli6.pdf}

\bibitem{Loday}
J.L. Loday, {\em S\'erie de Hausdorff, idempotents eul\'eriens et alg\`ebres de Hopf}, Exp. Math. {\bf 12} (1994), 165--178.

\bibitem{Magnus} W. Magnus, {\em On the exponential solution of differential equations for a linear operator},
Commun. Pure Appl. Math. \textbf{7} (1954), 649--673.

\bibitem{Majewski} M. Majewski,
\emph{Rational homotopical models a and uniqueness}, 
Mem. Amer.
Math. Soc.
\textbf{682}
(2000).

\bibitem{Markl}
M. Markl,
{\em Transferring $A_\infty$ (strongly homotopy associative) structures}, 
Rend. Circ. Mat. Palermo (2) Suppl. No. {\bf 79} (2006), 139--151.

\bibitem{Mielnik-Plebanski} B. Mielnik and J. Pleba\'nski, \emph{Combinatorial approach to Baker-Campbell-Hausdorff exponents},
	Ann. Inst. Henri Poincare, Sect. A \textbf{XII} (1970), 215--254.

\bibitem{Miller} M. Miller, \emph{Homotopy algebra structures on twisted tensor products and string topology operations}, Alg. and Geom. Topol. \textbf{11} (2011), 1163--1203; \texttt{arXiv:1006.2781v3 [math.AT]}.

\bibitem{Mnev}
P. Mn\"ev, {\em Notes on simplicial BF theory},
Mosc. Math. J. {\bf 9} (2009), no. 2, 371--410.

\bibitem{Pavel-obs}
P. Mn\"ev, {\em A Construction of Observables for AKSZ Sigma Models},
Lett. Math. Phys. {\bf 105} (2015), Issue 12, 1735--1783.

\bibitem{Reutenauer}
C. Reutenauer, {\em Free Lie Algebras}, Oxford University Press, New York (1993), xviii+269 pp.

\end{document}